\newcommand{\BA}{{\mathbb {A}}}
\newcommand{\BC}{{\mathbb {C}}}
\newcommand{\BF}{{\mathbb {F}}}
\newcommand{\BG}{{\mathbb {G}}}
\newcommand{\BM}{{\mathbb {M}}}
\newcommand{\BP}{{\mathbb {P}}}
\newcommand{\BQ}{{\mathbb {Q}}}
\newcommand{\BR}{{\mathbb {R}}}
\newcommand{\BS}{{\mathbb {S}}}
\newcommand{\BX}{{\mathbb {X}}}
\newcommand{\BZ}{{\mathbb {Z}}}
\newcommand{\CL}{{\mathcal {L}}}
\newcommand{\CM}{{\mathcal {M}}}
\newcommand{\CN}{{\mathcal {N}}}
\newcommand{\CO}{{\mathcal {O}}}
\newcommand{\CS}{{\mathcal {S}}}
\newcommand{\disc}{{\mathrm{disc}}}
\newcommand{\End}{{\mathrm{End}}}
\newcommand{\Gal}{{\mathrm{Gal}}}
\newcommand{\GL}{{\mathrm{GL}}}
\newcommand{\Hom}{{\mathrm{Hom}}}
\renewcommand{\Im}{{\mathrm{Im}}}
\newcommand{\inv}{{\mathrm{inv}}}
\newcommand{\Ker}{{\mathrm{Ker}}\,}
\newcommand{\Lie}{{\mathrm{Lie}}\,}
\newcommand{\PGL}{{\mathrm{PGL}}}
\newcommand{\Spec}{{\mathrm{Spec}}\,}
\newcommand{\Spf}{{\mathrm{Spf}}\,}
\newcommand{\Sp}{{\mathrm{Sp}}}
\newcommand{\tr}{{\mathrm{tr}}\,}
\newcommand{\CCO}{O}
\newcommand{\wh}{\widehat}
\newcommand{\incl}{\hookrightarrow}
\newcommand{\lra}{\longrightarrow}
\newcommand{\lto}{\longmapsto}
\newcommand{\bs}{\backslash}
\newcommand{\uF}{\underline{F}}  
\newcommand{\ep}{\varepsilon}
\newcommand{\nass}{\noalign{\smallskip}}
\newcommand{\htt}{h}
\newtheorem{theorem}{Theorem}[section]
\newtheorem{proposition}[theorem]{Proposition}
\newtheorem{lemma}[theorem]{Lemma}
\newtheorem{corollary}[theorem]{Corollary}
\theoremstyle{definition}
\newtheorem{definition}[theorem]{Definition}
\newtheorem{remark}[theorem]{Remark}
\newtheorem {question}[theorem]{Question}
\numberwithin{equation}{section}
\begin{document}


\title{New cases of $p$-adic uniformization}
\author{Stephen Kudla and Michael Rapoport}

\date{\today}
\maketitle

\centerline{\it To G. Laumon on his 60th birthday}

\tableofcontents

\section{Introduction} 
The subject matter of $p$-adic uniformization of Shimura varieties starts with Cherednik's 
paper \cite{Ch} in 1976, although a more thorough historical account would certainly involve at least  the 
names of Mumford and Tate. Cherednik's theorem states that the Shimura curve associated to a 
quaternion algebra $B$ over a totally real field $F$ which is split at precisely one archimedean
place $v$ of $F$ (and ramified at all other archimedean places), and is ramified at a non-archimedean place $w$ of residue characteristic $p$
admits $p$-adic uniformization by the Drinfeld halfplane associated to $F_w$, provided that the
level structure is prime to $p$. In adelic terms, this theorem may be formulated more precisely as follows.

Let $C$ be an open compact subgroup of $(B \otimes_F \BA_F^\infty)^\times$ of the form
\begin{equation*}
C = C^w \cdot C_w,
\end{equation*}
where $C_w \subset (B \otimes_F F_w)^\times$ is maximal compact and $C^w \subset (B \otimes_F \BA_F^{\infty,w})^\times$.
Let $\CS_C$ be the associated Shimura curve.  It has a canonical model over $F$ and its set of complex points, for the $F$-algebra 
structure on $\mathbb C$ given by $v$, has a complex uniformization 
\begin{equation*}
\CS_C(\mathbb C) = B^\times \bs \big[ {\bold X} \times (B \otimes_F \BA_F^\infty)^\times / C\big ],
\end{equation*}
where ${\bold X} = \BC \smallsetminus \BR$, which is acted on by 
$(B \otimes_F F_\BR)^\times$ via a fixed isomorphism $B^\times_v \simeq \rm GL_2(\BR)$.  

Cherednik's theorem states that, 
after extending
scalars from $F$ to ${\bar F}_w$,  there is an isomorphism of algebraic curves over ${\bar F}_w$,
\begin{equation}\label{Chered}
\CS_C \otimes_F {\bar F}_w \simeq (\bar B^\times \bs \big [ \Omega^2_{F_w} \times 
(B \otimes_F \BA_F^{\infty})^\times / C)\big ]) \otimes _{{F_w}} {\bar F}_w , 
\end{equation}
where $\bar B$ is the quaternion algebra over $F$, with the same invariants as $B$, except at $v$ and
$w$, where they are interchanged. Here $\Omega^2_{F_w}$ is the rigid-analytic space $\BP^1_{F_w} \smallsetminus \BP^1(F_w)$ over ${F_w}$ ({\it Drinfeld's halfspace}). This isomorphism is to be interpreted as follows. 

The rigid-analytic space $\bar B^\times \bs \big [ \Omega^2_{F_w} \times 
(B \otimes_F \BA_F^{\infty})^\times / C)\big ]$ corresponds to a unique   projective algebraic curve over $\Spec\,{F_w}$ under the GAGA functor.   In the right hand side of \eqref{Chered}, we implicitly replace the rigid-analytic space  by this projective scheme; extending scalars, we obtain a projective algebraic curve over  ${\bar F}_w$. The statement of Cherednik's theorem is that there exists an isomorphism between these two algebraic curves over ${\bar F}_w$.

Drinfeld \cite{Dr} gave a moduli-theoretic proof of Cherednik's theorem in the special case $F=\BQ$. 
Note that it is only in this case that the Shimura curve considered by Cherednik represents a moduli problem 
of abelian varieties. Furthermore, Drinfeld proved an `integral version' of this theorem which has the original version as a corollary. In his formulation appears  the formal scheme $\wh \Omega^2_{F_w}$ over $\Spec\,\CCO_{F_w}$, with
''generic fiber`` equal to $ \Omega^2_{F_w}$, defined by Mumford, Deligne and Drinfeld. In particular,
he interpreted the formal scheme $\wh \Omega^2_{F_w}$, and its higher-dimensional versions $\wh \Omega^n_{F_w}$
as  formal moduli spaces of {\it special formal $\CCO_{B_w}$-modules}, where $B_w$ is the central division algebra 
over $F_w$ with invariant $1/n$.

This integral uniformization  theorem was generalized to higher-dimensional cases in \cite{RZ}. In these cases, one uniformizes Shimura 
varieties associated to certain unitary groups over a totally real field $F$ which at the archimedean 
places have signature $(1,n-1)$ at one place $v$, and signature $(0,n)$ at all others, and such that the
associated CM-field $K$ has two distinct places over the $p$-adic place $w$ of $F$. (One has to be much, much more
specific to force $p$-adic uniformization, cf. loc.~cit. pp. 298--315). Using these methods, Boutot and Zink \cite{BZ} have given a 
conceptual proof of Cherednik's theorem for general totally real fields, and constructed at the same time  integral models for the corresponding Shimura varieties. Such integral models were also constructed for general Shimura curves by Carayol \cite{C}. In this context also falls the work of Varshavsky 
\cite{V, V2}, which concerns the $p$-adic uniformization of Shimura varieties associated to similar unitary groups, again where the $p$-adic
place $w$ splits in $K$ (but not the construction of integral models).

In this paper, we give a new (very restricted) class of Shimura varieties which admit $p$-adic uniformization. For  this class we prove  
$p$-adic uniformization for their generic fibers and, in certain cases, 
also $p$-adic uniformization for their integral models. 
The relevant reductive groups are defined in terms of two dimensional hermitian spaces for CM fields and 
the corresponding Shimura varieties represent
a moduli problem of abelian varieties with additional structure.   By extending  the moduli problem integrally, we obtain integral models of these 
Shimura varieties which allow us to formulate and prove an `integral' version of our uniformization result.
Behind this integral version of Theorem~\ref{ThmA} is our interpretation of the Drinfeld formal halfplane $\wh{\Omega}^2_F$, for a $p$-adic 
local field $F$ and a quadratic extension $K$ of $F$, as the formal moduli space of polarized two-dimensional $\CCO_K$-modules of Picard type, 
established in a previous paper\footnote{In \cite{KR}, we excluded ramification in the case of even residue characteristic, and this 
restriction will thus be in force in the present paper. }.

The simplest
example is the following. 
Let $K$ be an imaginary quadratic field, and let $V$ be a hermitian vector space of dimension $2$ over $K$
of signature $(1,1)$. Let $G = {\rm GU}(V)$ be the group of unitary 
similitudes of $V$.
For $C \subset G(\BA^\infty)$ an open compact subgroup, there is a Shimura variety ${\rm Sh}_C$ with canonical model over $\mathbb Q$
whose complex points are given by 
$${\rm Sh}_C(\mathbb C) \simeq G(\BQ) \bs [{\bold X} \times G(\BA^\infty)/C],$$
where again ${\bold X} = \BC \smallsetminus \BR$, which is acted on by $G(\BR)$ via a fixed isomorphism 
$G_{\rm ad} (\BR) \simeq \PGL_2(\BR)$.

Suppose that $p$ is a prime that does \emph{not} split in $K$ and that $p\neq 2$ if $p$ is ramified in $K$. 
Suppose that the local
hermitian space $V \otimes_\BQ \BQ_p$ is \emph{anisotropic} and that $C$ has  
 the form $C = C^p \cdot C_p$, where $C_p$ is the \emph{unique}
maximal compact subgroup of $G(\BQ_p)$. 
Let $\bar V$ be the hermitian space over $K$ which is positive definite, split at $p$, and locally coincides 
with $V$ at all places $\neq \infty,p$, and let $I = {\rm GU}(\bar V)$ be the corresponding group of unitary 
similitudes. Then there is an identification of the adjoint group $I_{\rm ad}(\BQ_p)$ with $\PGL_2(\BQ_p)$ and an action of 
$I(\BQ)$ on $G(\BA^{\infty})/C$. 
\begin{theorem}\label{ThmA}
 There is an
isomorphism of algebraic curves over the completion of the maximal unramified extension $\breve{{\BQ}}_p$ of
$\BQ_p$,
\begin{equation*}
{\rm Sh}_C \otimes_\BQ \breve{{\BQ}}_p
\simeq \big(I(\BQ)\bs [{\Omega}^2_{\BQ_p} \times  G(\BA^{\infty})/ C]\big)\otimes_{\BQ_p}\breve{{\BQ}}_p
\end{equation*}

\end{theorem}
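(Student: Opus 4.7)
The natural approach, following the blueprint of Drinfeld~\cite{Dr} and \cite{RZ}, is to realise $\mathrm{Sh}_C$ as the generic fibre of a flat integral PEL moduli problem $\CM_C$ over $\Spec \BZ_{(p)}$ and then to apply $p$-adic uniformization to $\CM_C$ along its basic locus. Concretely, I would define $\CM_C$ as the moduli problem of polarized abelian surfaces with $\CCO_K$-action of signature $(1,1)$, together with a $C^p$-level structure away from $p$; the hypothesis that $C_p$ is the unique maximal compact of $G(\BQ_p)$ translates into the Picard-type self-duality condition on the $p$-divisible group studied in \cite{KR}, so that the local model is precisely the one whose formal moduli theory is worked out there.

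The crucial step, and the main obstacle, is to show that every point of the special fibre $\CM_C \otimes_{\BZ_{(p)}} \BF_p$ is basic. Since $p$ is non-split in $K$ and $V \otimes_\BQ \BQ_p$ is anisotropic, the group $G_{\BQ_p}$ is compact modulo its centre; consequently the Kottwitz set $B(G_{\BQ_p}, \mu)$ of $\mu$-admissible $\sigma$-conjugacy classes (with $\mu$ the minuscule cocharacter determined by the signature $(1,1)$) reduces to a single, basic element, and the Newton stratification of the special fibre collapses onto the basic stratum. Verifying this requires combining the Kottwitz classification with the integral moduli data at $p$, especially when $p$ is ramified in $K$, where the Picard-type hypothesis from \cite{KR} is essential to avoid pathology in the local model and hence in the isocrystal classification.

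Granting this, the main theorem of \cite{RZ} produces an isomorphism between the formal completion of $\CM_C \otimes_{\BZ_{(p)}} \breve{\BZ}_p$ along its special fibre and
\begin{equation*}
I(\BQ) \bs \bigl[\CN \times G(\BA^{\infty,p})/C^p\bigr],
\end{equation*}
where $\CN$ is the local Rapoport--Zink formal moduli space of framed polarized $\CCO_K$-modules of Picard type with signature $(1,1)$, and $I$ is the associated inner form of $G$, which by the Hasse principle is identified with $\mathrm{GU}(\bar V)$. Substituting the main result of \cite{KR}, namely $\CN \simeq \wh{\Omega}^2_{\BQ_p} \times \BZ$ compatibly with the canonical $I_{\ad}(\BQ_p) \simeq \PGL_2(\BQ_p)$, passing to rigid-analytic generic fibres, and noting that $\BZ \times G(\BA^{\infty,p})/C^p \simeq G(\BQ_p)/C_p \times G(\BA^{\infty,p})/C^p = G(\BA^\infty)/C$ (the first bijection because $C_p$ is the unique maximal compact of the compact-mod-centre group $G(\BQ_p)$), one obtains the claimed isomorphism of rigid-analytic spaces. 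GAGA finally upgrades this to an isomorphism of algebraic curves over $\breve{\BQ}_p$.
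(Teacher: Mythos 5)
Your proposal follows the same overall skeleton as the paper: construct a flat integral PEL model, show its special fibre is entirely basic, invoke the Rapoport--Zink uniformization machinery, substitute the identification of the local formal moduli space with $\wh\Omega^2_{\BQ_p}\times\BZ$ from \cite{KR}, and pass to (rigid) generic fibres. The genuine difference is in how you establish basicness, and it is worth being precise about this because the paper itself discusses both routes.

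You argue group-theoretically: since $V\otimes\BQ_p$ is anisotropic, $G_{\mathrm{ad},\BQ_p}$ is an anisotropic inner form of $\PGL_2$, hence by Kottwitz (\cite{K2}, \S6) the set $B(G_{\BQ_p},\mu)$ is a singleton consisting of the basic class, and once one knows $[b]\in B(G_{\BQ_p},\mu)$ (by Mazur's inequality in the sense of \cite{RR}) the Newton stratification collapses. This is exactly the ``more sophisticated alternative'' the authors sketch in Remark~\ref{compkottw}, and it is legitimate. The paper's \emph{main} line is deliberately more elementary and self-contained: Proposition~\ref{uniformity} works out the Dieudonn\'e module directly, showing by explicit computation of the hermitian invariant that the ordinary case would force $\varepsilon_p=+1$, which is excluded by the hypothesis $\mathrm{inv}_p(V)=-1$; Lemma~\ref{uniquenessin prod} then deduces basicness by an elementary cohomological argument. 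The elementary route also produces the extra information (the isogeny class is literally unique, and the scaling constant of the polarization can be pinned down in $\BZ_p^\times$) that the RZ theorem needs, rather than reading it off from the group-theoretic classification.

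A second, more subtle difference: you set up $\CM_C$ without explicitly imposing the local invariant condition $\mathrm{inv}_p(A,\iota,\lambda)=\mathrm{inv}_p(V)$ at $p$. The paper does impose it (this is what $\CM_{r,h,V}$ does in Section~\ref{section3}), and then, for $F=\BQ$ only, shows a posteriori that the condition is automatic: away from $p$ the level structure fixes all invariants, the product formula (Proposition~\ref{prod.formula}) gives it at $p$ in characteristic zero, and in characteristic $p$ one appeals to flatness of the local model (G\"ortz, Pappas) to lift to characteristic zero and re-apply the product formula. Your Kottwitz-style argument lets you sidestep the entire invariant/product-formula apparatus of Sections 3--4 in the $F=\BQ$ case, which is a genuine simplification if all you want is Theorem~\ref{ThmA}. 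The price you pay is that your argument relies on the assertion that every characteristic-$p$ point gives $[b]\in B(G_{\BQ_p},\mu)$, which ultimately rests on the same flatness-of-local-models input you tuck into the phrase ``combining the Kottwitz classification with the integral moduli data at $p$''; you should make that dependence explicit (citing \cite{P}, \cite{G}, \cite{PRS} as the paper does), especially in the ramified case. With that caveat spelled out, your proof is correct.
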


%

Next we give a simplified version of our main theorem about integral uniformization\footnote{Unexplained terms in the statement are defined in the main body of the text. }. 
Let $K$ be a CM quadratic extension of a totally real field $F$ of degree $d$ over $\BQ$, and let $V$ be a hermitian vector space of dimension $2$ over $K$
with signature (1, 1) at every archimedean place of $F$. Let $G$ be the group of unitary similitudes of $V$ with multiplier in $\BQ^\times$. 
For an open compact subgroup $C$ of $G(\BA^\infty)$, let ${\rm Sh}_C$ be the canonical model of the corresponding Shimura variety, which, in this case,  
is a projective variety of dimension $d$ defined over $\BQ$. 
 
Suppose that $p$ is a prime
that decomposes completely in $F$ and such that each prime divisor ${\bf p}$ of $p$ in $F$ is inert or ramified in $K$. 
If $p=2$, then suppose that no ${\bf p}$ is ramified in $K$.
We assume that $\inv_{{\bf p}}(V)=-1$ for all 
${\bf p} \vert p$. 
Let $C^p$ be an open compact subgroup of $G(\BA^{\infty, p})$, and let $C=C^p\cdot C_p$, where $C_p$ is the unique maximal compact subgroup of $G(\BQ_p)$. 

In section 4, we define a model $\CM_{r, h, V}(C^p)$ of ${\rm Sh}_C$ over $\BZ_{(p)}$ as a moduli space of  \emph{almost principal CM-triples
$(A, \iota, \lambda)$ of generalized CM-type $(r, h)$} 
 with level-$C^p$-structure. Here $r_{\varphi} = 1$, for all complex embeddings $\varphi$ of $K$, and the function $h$ which 
 describes the kernel of the polarization $\lambda$
also has to satisfy the compatibility condition of Proposition \ref{prop.nonempty}. In particular, we demand that 
the localization of the kernel of the polarization $\lambda$ at any ${\bf p}\vert p$  satisfies 
\begin{equation}
\begin{aligned}
 p\cdot\big(\Ker\,\lambda\big)_{\bf p}&=(0),\\ 
\vert (\Ker\,\lambda)_{\bf p}\vert&= \begin{cases} p^2 &\text{when ${\bf p}\vert p$ is unramified,}\\
\noalign{\smallskip}
1&\text{when ${\bf p}\vert p$ is ramified.}
\end{cases}
 \end{aligned}
 \end{equation}
In addition, for each place $v$ of $F$, the invariant $\inv_v(A,\iota,\lambda)$,  
defined in section 3, is required to coincide with the invariant $\inv_v(V)$ of the hermitian space $V$-- see section \ref{section3} for the precise definitions.  
We denote by $\CM_{r,h,V}(C^p)^{\wedge}$
the completion of this model along its special fiber. 

\begin{theorem}\label{thm1.2} There is a 
$G(\BA^{\infty, p})$-equivariant isomorphism of $p$-adic formal schemes
\begin{equation*}
 \CM_{r,h,V}(C^p)^{\wedge} \times _{{\rm Spf}\,\BZ_p}{\rm Spf}\,\breve{\BZ}_p  \simeq   I(\BQ)\bs\big[\big((\widehat{\Omega}^2_{\BQ_p})^d \times_{{\rm Spf}\,\BZ_p}{\rm Spf}\,\breve{\BZ}_p\big) \times
 G(\BA^{\infty})/C\big]  \, .
\end{equation*}
Here $I(\BQ)$ is the group of $\BQ$-rational points of the inner form $I$ of $G$ such that $I_{\rm ad} (\BR)$ is compact, 
$I_{\rm ad}(\BQ_p) \simeq {\rm PGL}_2(\BQ_p)^d, \text{and } I(\BA^{\infty, p}) \simeq G(\BA^{\infty, p})$.

The natural descent datum on the LHS induces on the RHS the natural descent datum on the first factor multiplied
with the translation action of
$ (1, t)$ on $ G(\BA^{\infty})/C=G(\BA^{\infty, p})/C^p \times G(\BQ_p)/C_p$, where $t\in G(\BQ_p)$ is any element with ${\rm ord}_p\,c(t) = 1$ 
for $c: G(\BQ_p)\rightarrow \BQ_p^\times$ the scale homomorphism.
\end{theorem}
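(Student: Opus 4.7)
The natural framework is the Rapoport--Zink $p$-adic uniformization theorem, combined with the local identification supplied by \cite{KR}. My plan has four steps.

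First, I would fix a geometric base point $(\BA_0,\iota_0,\lambda_0,\ov\eta_0)$ of the special fiber $\CM_{r,h,V}(C^p)\otimes\ov\BF_p$, whose non-emptiness is given by Proposition~\ref{prop.nonempty}. Because $p$ splits completely in $F$, the $p$-divisible group $\BA_0[p^\infty]$ decomposes as $\prod_{\mathbf p\mid p}X_{0,\mathbf p}$ according to the decomposition $\CCO_F\otimes\BZ_p=\prod_{\mathbf p\mid p}\CCO_{F_{\mathbf p}}$. The hypothesis $\inv_{\mathbf p}(V)=-1$ for every $\mathbf p\mid p$ makes each local unitary group anisotropic modulo its similitude center, and a Dieudonné-theoretic argument using the signature condition and the almost principal polarization condition at $\mathbf p$ then shows that each $X_{0,\mathbf p}$ must be isoclinic of slope $1/2$, so every geometric point of the special fiber is basic. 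Consequently the whole formal completion $\CM_{r,h,V}(C^p)^\wedge$ coincides with its basic locus, which is what allows a uniformization by a single isogeny class.

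Second, I would attach to $(\BA_0,\iota_0,\lambda_0)$ the reductive $\BQ$-group $I$ of self-quasi-isogenies respecting $\iota_0$ and preserving $\lambda_0$ up to $\BQ^\times$-scaling. Basicness forces $I$ to be an inner form of $G$; the archimedean component $I_\infty$ is the unitary similitude group of a positive-definite two-dimensional hermitian space at every real place of $F$, so $I_{\mathrm{ad}}(\BR)$ is compact; the adjoint local group at $p$ is $\PGL_2(\BQ_p)^d$ by the classification of basic $p$-divisible groups in our setting; and the prime-to-$p$ Tate module together with the $\eta_0$-structure gives $I(\BA^{\infty,p})\simeq G(\BA^{\infty,p})$. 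Next I would form the Rapoport--Zink space $\CN$ of polarized two-dimensional $\CCO_K\otimes\BZ_p$-modules of Picard type quasi-isogenous to $(\BA_0[p^\infty],\iota_0,\lambda_0)$. Applying \cite{KR} prime by prime, and using $F_{\mathbf p}=\BQ_p$ (since $p$ splits in $F$), yields a canonical identification
$$\CN\simeq (\wh\Omega^2_{\BQ_p})^d\times_{\Spf\BZ_p}\Spf\breve\BZ_p.$$

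Third, I would construct the Rapoport--Zink uniformization map
$$\Theta\colon I(\BQ)\bs\bigl[\CN\times G(\BA^\infty)/C\bigr]\lra \CM_{r,h,V}(C^p)^\wedge\times_{\Spf\BZ_p}\Spf\breve\BZ_p$$
by converting a quasi-isogeny $\rho\colon X\to\BA_0[p^\infty]$ and a class $gC$ into the unique CM-triple with level structure whose $p$-divisible group is $X$, whose prime-to-$p$ level structure is $\ov\eta_0\circ g$, and whose global quasi-isogeny to $(\BA_0,\iota_0,\lambda_0)$ is $(\rho,g)$. Bijectivity on $\ov\BF_p$-points follows from step one combined with the description of the basic isogeny class by $I(\BQ)$-torsors (Kottwitz's counting in the present signature $(1,1)^d$ case), while on formal neighborhoods $\Theta$ is an isomorphism by Serre--Tate plus the local identification of \cite{KR} at each $\mathbf p\mid p$.

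Finally, the descent datum on the LHS is canonical, coming from the definition of $\CM_{r,h,V}(C^p)$ over $\BZ_{(p)}$. On the RHS, the Frobenius descent on $\CN$ is the standard one twisted by the Frobenius quasi-isogeny of $(\BA_0,\iota_0,\lambda_0)$, whose similitude factor has $p$-adic valuation $1$; on the adelic factor $G(\BA^\infty)/C=G(\BA^{\infty,p})/C^p\times G(\BQ_p)/C_p$ this twist is exactly translation by $(1,t)$ for any $t\in G(\BQ_p)$ with $\mathrm{ord}_p\,c(t)=1$, proving the descent statement. I expect the main obstacle to be step one: verifying that the local anisotropy of $V$ at each $\mathbf p\mid p$, together with the almost-principal polarization condition, pins down the Newton polygon to the basic one at every geometric point of the special fiber. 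The other steps are standard once $\CN$ and $I$ have been correctly identified.
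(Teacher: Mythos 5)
Your proposal follows the same overall strategy as the paper: reduce to the basic locus, identify $I$ and the Rapoport--Zink space, apply the Rapoport--Zink uniformization theorem, and compute the descent datum. The skeleton matches the paper's proof via its Theorem~\ref{intunif} and Corollary~\ref{onlytype1}. However, there are two points where your sketch has genuine gaps.

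First, your claimed identification $\CN\simeq(\widehat{\Omega}^2_{\BQ_p})^d\times_{\Spf\BZ_p}\Spf\breve\BZ_p$ is incorrect as stated. You define $\CN$ as the space of all quasi-isogenies $\rho\colon X\to\BA_0[p^\infty]$, which is the full Rapoport--Zink space $\breve\CM$; this space decomposes (after imposing that the similitude heights agree across the places $\mathbf p\mid p$) as $(\widehat{\Omega}^2_{\BQ_p})^d\times\BZ$, not $(\widehat{\Omega}^2_{\BQ_p})^d$. The isomorphism of \cite{KR} only identifies the height-zero locus $\breve\CM_i[0]$ with $\breve\Omega^2_{\BQ_p}$; extending it to the full $\breve\CM_i$ compatibly with Weil descent data requires a careful choice (the paper's Lemma~\ref{comp.drinfeld}). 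Moreover, if $\CN$ is the full RZ space, then the abstract uniformization theorem produces $I(\BQ)\backslash[\CN\times G(\BA^{\infty,p})/C^p]$, not $I(\BQ)\backslash[\CN\times G(\BA^\infty)/C]$; the extra $\BZ$-component of $\CN$ is what becomes $G(\BQ_p)/C_p$, so your formula double-counts. Either you should restrict $\CN$ to the degree-zero locus and explain why the uniformization map respects this, or keep the full $\CN$ and use $G(\BA^{\infty,p})/C^p$ on the adelic side and then decompose.

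Second, you acknowledge that step one (basicness of every geometric point) is ``the main obstacle,'' and in fact your proposal does not actually establish it. This is precisely where the paper's Proposition~\ref{uniformity} does the nontrivial work: one writes the rational Dieudonn\'e module of a putative ordinary point explicitly, computes the local invariant $\inv(X,\iota,\lambda)$ via the recipe of Section~\ref{section3}, and finds that it must equal $+1$, contradicting the requirement $\ep=-1$ imposed by $\inv_{\mathbf p}(V)=-1$. Your phrasing suggests that the anisotropy of $V_{\mathbf p}$ alone rules out the ordinary slope by ``a Dieudonn\'e-theoretic argument,'' but the actual mechanism runs through the local invariant attached to $(A,\iota,\lambda)$ rather than to $V$; these are tied together by the hypothesis $\inv_{\mathbf p}(A,\iota,\lambda)=\inv_{\mathbf p}(V)$, which is part of the definition of $\CM_{r,h,V}$. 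Without spelling this computation out, the key input to the whole argument is missing.
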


In our general integral uniformization result, Theorem~\ref{intunif}, several other types of local conditions are also allowed at the primes 
dividing $p$, conditions which we refer to as uniformizing data of the second and third kind. Both of these are analogous to the 
conditions already considered in Chapter 6 of \cite{RZ}. On the other hand, the uniformizing data of the first kind, as illustrated in the 
two examples just given, is new and arises from the identification of \cite{KR}. 
This identification is valid for any $p$-adic field  (with, as usual, a caveat when $p=2$).  It is remarkable that, for the
$p$-adic uniformization of our class of Shimura varieties, only the case where all localizations $F_{\bf p}$ are equal to $\BQ_p$ is relevant. This can be traced to the fact that, on the one hand the formal 
moduli problem for  $\wh{\Omega}^2_{F_{\bf p}}$ imposes that $\CCO_{F_{\bf p}}$ acts through the structure morphism on the Lie algebra of the formal groups appearing, 
but that,  on the other hand,  the Lie algebras of the relevant abelian varieties are very often free $\CCO_F\otimes\CO_S$-modules, locally on $S$. In fact, the latter condition holds if $F$ is unramified over $p$ and $S$ is a flat $\CO_{E_p}$-scheme. 
 In contrast, in Cherednik's theorem there are no hypotheses on the local extension $F_w/\BQ_p$.  It seems very likely that our uniformization theorem is valid also for non-trivial local extensions. This would require a generalization of our interpretation of the Drinfeld formal halfplane 
in \cite{KR}. Such a generalization is the subject of ongoing work with Th. Zink, comp. Remark \ref{travdeZink}.

In \cite{RZ} a general uniformization theorem valid for arbitrary Shimura varieties of PEL-type is proved. However, in this generality, one only obtains uniformization along the \emph{basic locus} in the special fiber. As soon as this basic locus has dimension
strictly smaller than that of the whole special fiber, the uniformizing formal scheme is no longer $p$-adic; only when all points of the special fiber are basic can there be $p$-adic uniformization. It is then a matter of experience that, in these very rare cases, the uniformizing formal scheme is always a product of Drinfeld halfspaces. This is predicted in \cite{R}, and is also supported by the classification of Kottwitz of \emph{uniform pairs} $(G, \mu)$,
cf. \cite{K2}, \S 6.

In this paper, we are dealing with $p$-divisible groups, say over an algebraically closed field $k$ of characteristic $p$,  equipped with some complex multiplication and with a  compatible polarization, and their associated Dieudonn\'e modules. The (rational)  Dieudonn\'e modules of these $p$-divisible groups inherit these additional structures. 
Here we treat the theory of these Dieudonn\'e modules with additional structure on the most elementary level, not on the group-theoretical level. This is in contrast to Kottwitz's approach where one first fixes a suitable algebraic group $G$ over $\BQ_p$ and then describes these (rational) Dieudonn\'e modules as elements of $G(W_{\BQ}(k))$. We refer to \cite{DOR}, ch. XI, \S 1 for the theory of {\it augmented group schemes} with values in the tannakian category of isocrystals over $k$ which links these two approaches. We feel that Kottwitz's approach  (or that of augmented group schemes) would have been unnatural in our context, although it could most probably be used as an alternative method to obtain our results. As an example, one can compare the proof of Lemma \ref{uniformity} using our method, and the proof of the same statement given in Remark \ref{compkottw} using Kottwitz's method.   In particular, as G.~Laumon pointed out to us, it is quite likely that there is a close connection between the local invariants defined in section \ref{section3} and the Kottwitz invariant of \cite{K}.

In section 3 of \cite{BZ}, Boutot and Zink give a new proof of Cherednik's theorem for an arbitrary totally real field $F$ by embedding the Shimura curve attached to the quaternion algebra  $B$ into a Shimura variety for a twisted unitary similitude group $G^{\bullet}$  associated to $B$ and a CM quadratic extension
$K$ of $F$ (see   section~\ref{append} for the relation between quaternion algebras and twisted unitary similitude groups).  Their proof is analogous to Drinfeld's proof in the case $F=\BQ$,  in that the Shimura variety for $G^{\bullet}$ represents a moduli problem 
of abelian varieties. In fact, this moduli problem  has an integral extension, which provides a  natural integral model; by (a slight generalization of) the results of \cite{RZ}, one obtains 
an integral $p$-adic uniformization theorem. It is essential in this construction that the prime ${\bf p}$ of $F$ at which 
uniformization occurs is {\it split} in the extension $K/F$ and ramified in $B$. In particular, the quaternion algebra $S = B\otimes_FK$ remains a division algebra
so that the group $G^{\bullet}$ is a {\it twisted} unitary similitude group.  
The groups $G$ that we consider in the present paper are analogously determined by an indefinite division quaternion algebra $B$ and a CM extension $K$ of $F$. 
However, the essential distinction is that, in the case of a uniformizing prime ${\bf p}$ of the first kind, we assume that ${\bf p}$ is ramified in $B$ but
does {\it not split} in $K$.  
In fact, for simplicity, we have restricted to the case in which the extension $K$ splits $B$, so that our group $G$ is the (untwisted) group of unitary 
similitudes 
of a $2$-dimensional 
hermitian space $V$ over $K$. By embedding the Shimura curve into the Shimura 
variety attached to $G$,  it should be possible to carry over the Boutot-Zink proof of Cherednik's  theorem using our type of uniformization. 

In general, suppose that $G$ is the twisted unitary similitude group attached to  
an indefinite
division quaternion algebra $B$ and a CM field $K$ over $F$ with associated Shimura data determined by $\bold h$ as in (\ref{defofh}). 
For a suitable open compact subgroup $C$, the corresponding Shimura variety then represents a moduli problem for abelian varieties 
and one expects that it will admit a good integral model and $p$-adic uniformization, both integral and rigid analytic,  
under the conditions described in section~6 of this paper (with the generalization of the notion of a uniformizing prime of the first kind explained in Remark \ref{travdeZink}).  
Of course, there are many other Shimura data, for example those of Cherednik, that induce identical  Shimura data on the associated adjoint groups. 
The resulting Shimura varieties are by no means identical (recall the difference between Shimura varieties of Hodge type and Shimura varieties of abelian type); however,  
it should be possible to obtain integral models and $p$-adic uniformization for them by pullback from those for $G$. 
For the methods employed here and in \cite{BZ}, the (twisted) unitary similitude groups are the fundamental objects. 

%
%


The Cherednik-Drinfeld uniformization theorem was used in arithmetic applications, like level-raising, resp. level-lowering of modular forms and also in bounding the size of Selmer groups, cf. \cite{Ri}, esp. \S 4, and \cite{Ra}, \cite{N}. It is to be hoped that similar applications can be found for our uniformization theorem. 

We finally summarize the contents of the various sections. In section 2 we introduce the stack of CM-triples of a fixed {\it generalized CM-type} of arbitrary rank. In section 3 we introduce the local invariants, with values  in $\{\pm 1\}$,  of a CM-triple, one for each place, when the rank is  even. An interesting question that arises in this context is when these local invariants satisfy the product formula, cf. Question \ref{conj.prodform}. We then specialize to rank $2$, and show in section  4 that fixing the local invariants gives a decomposition of the stack of {\it almost principal CM-triples} into stacks with good finiteness properties, and with generic fiber equal to a Shimura variety. In section 5 we consider the local situation, i.e.,  consider $p$-divisible groups instead of abelian varieties, and exhibit conditions on {\it local CM-triples} that guarantee that they are all isogenous to each other and supersingular. In section 6, the results of section 5 are  then used in order to prove an integral $p$-adic uniformization theorem. In section 7 we give a rigid-analytic   uniformization theorem,  which allows us to also treat level structures
that are no longer maximal at $p$. The appendix, section 8, explains the relation between quaternion algebras and twisted unitary similitude groups.

We thank J.~Tilouine for raising the question of the global consequences of our local theorem in \cite{KR}. This paper arose in trying to answer his query. We also thank the referee for his remarks.

\medskip

\noindent{\bf Notation.}  

For a number field $F$, $\Sigma(F)$ (resp. $\Sigma_f(F)$, resp. $\Sigma_\infty(F)$) denotes the set of places (resp. finite places, resp. infinite places). By $\bar\BQ$ we denote the field of algebraic numbers in $\BC$. For any $p$-adic field $F$, we denote by $\breve F$ the completion of the maximal unramified extension of $F$, and by $\CCO_{\breve F}$ its ring of integers. For a perfect field $k$, we write $W(k)$ for its ring of Witt vectors, and $W(k)_\BQ$ for its field of fractions.

\section{Generalized CM-types}

Let $K$ be a CM-field, with totally real subfield $F$. 
\begin{definition}
 Let $n \geqslant 1$. $A$ \emph{generalized CM-type of rank} $n$ for $K$ is a function
\begin{equation*}
r : \Hom_\BQ(K, \bar \BQ) \longrightarrow \BZ_{\geqslant 0},\,\,\quad \varphi \mapsto r_\varphi,
\end{equation*}
such that
\begin{equation*}
r_\varphi + r_{\bar \varphi} = n, \quad\forall \varphi \in \Hom_\BQ(K, \bar \BQ).
\end{equation*}
\end{definition}
We note that the values of $r$ are integers in the interval $[0,n]$. For $n=1$,  this notion
reduces to the usual notion of a CM-type for $K$, i.e., a half-system of complex embeddings. 
The notion is equivalent to that of a  {\it effective $n$-orientation} of $K$ discussed in \cite{GGK}, V. A, p. 190. 

A generalized CM-type $r$ determines its \emph{reflex field}, the subfield 
$E=E(r)$ of $ \bar \BQ$ characterized by 
\begin{equation*}
 \Gal(\bar \BQ/E) = \{ \sigma \in \Gal (\bar \BQ/\BQ) \mid
r _{\sigma \circ \varphi} = r_\varphi,\, \forall \varphi \}.
\end{equation*}
We will be interested in abelian varieties with action by $\CCO_K$ such that the $\CCO_K$-action on the 
Lie algebra is given by a generalized CM-type.
\begin{definition}
 Let $r$ be a generalized CM-type of rank $n$ for $K$ with associated reflex field $E$. An
abelian scheme $A$ over an $\CCO_E$-scheme $S$ is  \emph{of CM-type} $r$ if $A$ is equipped 
with an $\CCO_K$-action $\iota$ such that,  for all $a\in\CCO_K$,
\begin{equation}\label{signature.condition}
 {\rm char} (T, \iota (a) \vert \Lie  A) = i\big(\underset{\varphi \in \Hom(K, \bar \BQ)} \prod (T-\varphi(a))^{r_\varphi}\big),
\end{equation}
where $i:\CCO_E\rightarrow \mathcal O_S$ is the structure homomorphism ({\it Kottwitz condition}). 
\end{definition}

It will sometimes be convenient to fix a  CM-type  $\Phi\subset \Hom_\BQ(K,\bar\BQ)$ 
and to express  
the function $r$ as a signature
\begin{equation}\label{signature}
((r_v,s_v))_{v\in \Sigma_\infty(F)}, \qquad r_v = r_{\varphi}, \ s_v = r_{\bar\varphi},  \ \varphi=\varphi_v\in \Phi.
\end{equation} 
Here $\varphi=\varphi_v$ induces the place $v\in\Sigma_\infty(F)$. 
In particular, we will sometime refer to (\ref{signature.condition}) as the {\it signature condition}.

Let $(A,\iota)$ be an abelian scheme of CM-type $r$ over an $\CCO_E$-scheme $S$. We will consider polarizations 
$\lambda : A \rightarrow A^\vee$ such that the corresponding Rosati-involution induces the complex
conjugation  on $K$.  Such triples $(A, \iota, \lambda)$ will be called \emph{triples of CM-type $r$}, and $K$ will be clear from the context.

Let $\CM_r$ be the stack of triples of CM-type $r$. 
We note that $\CM_r$  is a Deligne-Mumford stack locally of finite type over $\Spec \, \CCO_E$, where $E=E(r)$ is the 
reflex field of the generalized CM-type $r$. However, it is highly reducible. We will try to separate connected components by introducing additional invariants.  In fact, we will be mostly interested in triples of the following kind. 
\begin{definition}
A triple of CM-type $r$ is  \emph{almost principal} if there exists a (possibly empty) finite set $\CN_0$ of prime
ideals $\mathfrak q$ of $K$, all lying above prime ideals of $F$ which do not split in $K/F$, such that, setting 
$\mathfrak n=  \prod\nolimits_{\mathfrak q\in \CN_0} \mathfrak q,$  we have 
$$
\Ker \lambda \subset A[\iota(\mathfrak n)].
$$
\end{definition}

\section{Local invariants}\label{section3}
Let $r$ be a generalized CM-type of rank $n$ for the CM-field $K$.   In
this section we assume that $n$ is \emph{even}. 

We first suppose that $S=\Spec k$, where $k$ is any field that is, at the same time, an $\CCO_E$-algebra. 
To a triple $(A, \iota, \lambda)$ of CM-type $r$ over $S$,  we will
attach a local invariant
\begin{equation*}
 \inv_v (A, \iota, \lambda)^{\natural} \in F^\times_v / {\rm Nm} (K^\times_v)
\end{equation*}
for every place $v$ of $F$.  In addition, 
we let
$$ \inv_v (A, \iota, \lambda)= \chi_v( \inv_v (A, \iota, \lambda)^{\natural}) =\pm 1 ,$$
where $\chi_v$ is the character of $F_v^\times$ associated to $K_v/F_v$. 
In particular, if $v$ is non-archimedean and split in $K$, then the invariant is trivial.
\smallskip

a) \emph{Archimedean places.} If $v$ is archimedean, we set 
\begin{equation*}
 \inv_v (A, \iota, \lambda) = \inv_v (A, \iota, \lambda)^{\natural} =(-1)^{r_\varphi + n(n-1)/2}.
\end{equation*}
Here $\varphi$ is either of the two complex embeddings of $K_v$.  Recall that $n$ is even. Note that the factor $(-1)^{n(n-1)/2}$ 
is included in analogy with the standard definition of the discriminant of a quadratic or hermitian form.  
There it is included so that the discriminant  depends only on the Witt class of the form, i.e., does not change 
if a hyperbolic plane is added.

\smallskip

b) \emph{Non-archimedean places not dividing ${\rm char} \, k$.} Let $v \vert \ell$. We fix a trivialization 
of the $\ell$-power roots of unity in an algebraic closure $\bar k$ of $k$, i.e. an isomorphism 
$\BZ_\ell(1)_{\bar k} \simeq \BZ_{\ell \,\bar k}$.

Let $V_\ell (A)$ be the rational $\ell$-adic Tate module of $A$. Then $V_\ell (A)$ is a free $K \otimes \BQ_\ell$-module of rank $ n$. Due to the trivialization of 
$\BZ_\ell(1)_{\bar k}$, the polarisation $\lambda$ determines an alternating bilinear form
\begin{equation}\label{Weilpairing}
 \langle \, , \rangle_\lambda \, : V_\ell(A) \times V_\ell(A) \longrightarrow \BQ_\ell \, ,
\end{equation}
satisfying
\begin{equation}
 \langle \iota (a) x,y \rangle_\lambda \, = \, \langle x, \iota(\bar a)y\rangle_\lambda \, \quad , \quad a \in \CCO_K .
\end{equation}
According to the decomposition $ F\otimes \BQ_\ell = \prod\nolimits_{w \vert \ell}  F_w$ , we can write
\begin{equation}
 V_\ell(A) =  \bigoplus\nolimits_{w \vert \ell} \, V_w(A) \, ,
\end{equation}
where $V_w(A)$ is a free $K_w$-module of rank $ n$. We can write
\begin{equation}
 \langle \, , \rangle_\lambda \, = \sum\nolimits_{w \vert \ell}  \, {\rm Tr}_{F_w/\BQ_\ell} \langle \, ,\, \rangle_{\lambda, w} \, ,
\end{equation}
where $\langle \, , \rangle_{\lambda, w}$ is an alternating $F_w$-bilinear form on $V_w(A)$. Now $K_v$ is a field by assumption. Let $\delta_v \in K_v^\times$
with $\bar \delta_v = - \delta_v$. Then we can write
\begin{equation}
 \langle x,y \rangle_{\lambda, v} \, = {\rm Tr}_{K_v/F_v} (\delta_v^{-1} \cdot (x,y)_v) \, ,
\end{equation}
for a unique $K_v/F_v$-hermitian form $(\cdot \, , \cdot)_v$ on $V_v(A)$.

We then define the local invariant at $v$ as the discriminant of the hermitian form,
\begin{equation}
 \inv_v  (A, \iota, \lambda)^\natural = \disc\, (\,,\,)_v \in F^{ \times}_v/{\rm Nm} (K^\times_v) .
\end{equation}
Recall that 
\begin{equation}
\disc\, (\,,\,)_v = (-1)^{n(n-1)/2}\,\det((x_i,x_j)_v) \in F^{ \times}_v/{\rm Nm} (K^\times_v),
\end{equation}
where $\{x_i\}$ is any $K_v$-basis for $V_v(A)$. 
This is well-defined, independent of the auxiliary choices made. Indeed, any two choices of $\delta_v$
 differ by an element in $F_v^{\times}$ and a different choice changes the
discriminant by a factor in $F_v^{\times, n} \subset {\rm Nm} (K^\times_v)$ (recall that $n$ is even).  Similarly,
a different trivialization of $\BZ_\ell(1)_{\bar k}$ leaves the discriminant unchanged. We also note that
$\inv_v (A, \iota, \lambda)$ is unchanged after any base change $k \rightarrow k'$.

Note that  $\inv_v (A, \iota, \lambda)=1$ for almost all places.  More precisely, suppose that $v\vert \ell$ where $\ell$ 
is an odd prime that is unramified in $K$ and 
$\ell $ does not divide $|\Ker(\lambda)|$.  Then the Tate module $T_\ell(A)\subset V_\ell(A)$ is a free $\CCO_K\otimes_\BZ\BZ_\ell$-module of rank $n$
and
is self dual with respect to $\langle\ ,\ \rangle_{\lambda}$.  Taking $\delta_v$ to be a unit, we obtain a unimodular hermitian lattice 
$T_\ell(A)_v$ in $V_v(A)$, and hence $\disc (\ ,\ )_v$ is a unit and hence a norm. 
\smallskip

c) \emph{Non-archimedean places dividing $p = {\rm char}$} $k$. Let $v \vert p$. Let $\bar k$ be an algebraic
closure of $k$, and let $M = M(A_{\bar k})$ be the Dieudonn\'{e} module of the abelian variety 
$A_{\bar k} = A \otimes_k \bar k$. Then $M_\BQ = M \otimes_\BZ \BQ$ is a free 
$K\otimes_\BZ W(\bar k)$-module of rank $n$, with the $\sigma$-linear Frobenius operator $\uF$. 
Under the decomposition $ F\otimes_\BQ \BQ_p =  \prod\nolimits_{w \vert p} F_w$, we obtain a decomposition
\begin{equation*}
 M_\BQ =  \bigoplus\nolimits_{w \vert p} M_{\BQ,w} \, ,
\end{equation*}
where $M_{\BQ,w}$ is a free $F_w \otimes_{\BZ_p} W(\bar k)$-module of rank $2 n$, stable under the Frobenius. 
In particular,  for our fixed $v$,  $M_{\BQ, v}$ is a free $K_v \otimes_{\BZ_p} W(\bar k)$-module of rank $n$. As in the $\ell$-adic case,
$M_{\BQ, v}$ is equipped with a hermitian form
\begin{equation*}
 ( \, , )_v : M_{\BQ,v} \times M_{\BQ,v} \lra K_v \otimes_{\BZ_p} W(\bar k) \, ,
\end{equation*}
which, in this case, satisfies the additional condition 
\begin{equation*}
 (\uF x, \uF y)_v = p \cdot (x,y)^\sigma_v \quad , \quad \forall x,y \in M_{\BQ, v} \, .
\end{equation*}
Let 
\begin{equation*}
 N_{\BQ,v} = \bigwedge^n_{K_v\otimes_{\BZ_p} W(\bar k)} \, M_{\BQ, v} \, .
\end{equation*}
Then $N_{\BQ, v}$ is a free $K_v \otimes_{\BZ_p} W(\bar k)$-module of rank $1$, equipped with a hermitian form
\begin{equation*}
 ( \, , )_v : N_{\BQ,v} \times N_{\BQ,v} \lra K_v \otimes_{\BZ_p} W(\bar k)
\end{equation*}
satisfying
\begin{equation*}
 (\uF x, \uF y)_v = p^n \cdot (x,y)_v^\sigma \, .
\end{equation*}
Furthermore, $N_{\BQ,v}$ is an isoclinic rational Dieudonn\'{e} module of slope $\frac{n}{2}$. 
 Since $n$ is
even, there exists $x_0 \in N_{\BQ, v}$ with $\uF x_0 = p^{\frac{n}{2}} \cdot x_0$, such that $x_0$ generates
the $K_v \otimes_{\BZ_p} W(\bar k)$-module $N_{\BQ, v}$. From 
\begin{equation*}
 p^n \cdot ( x_0, x_0)_v = (p^{\frac{n}{2}} x_0, p^{\frac{n}{2}} x_0)_v = (\uF x_0, \uF x_0)_v = p^n \cdot (x_0, x_0)^\sigma_v \, , 
\end{equation*}
it follows that $(x_0, x_0)_v \in F_v^\times$. The local invariant is the residue class 
\begin{equation*}
\inv_v (A, \iota, \lambda)^\natural = (-1)^{n(n-1)/2}\,(x_0, x_0)_v \in F^{ \times}_v / {\rm Nm} (K^\times_v). 
\end{equation*}
It is easy to see that this definition is independent of all choices, i.e., of the algebraic closure $\bar k$ of $k$,
of the scaling $\delta_v$ of the hermitian form $( \, , )_v$, and of the choice of the generator 
$x_0$ of $N_{\BQ, v}$ above.
\begin{remark} The invariant at a $p$-adic place is analogous to Ogus's \emph{crystalline discriminant}, cf. \cite{O}.
\end{remark}
\begin{proposition}\label{locconst}
 Let $(A, \iota, \lambda) \in \CM_r (S)$, where $S$ is a connected scheme. 
Then for every place $v$ of $F$, the function
\begin{equation*}
 s \lto \inv_v (A_s, \iota_s, \lambda_s)
\end{equation*}
is constant on $S$.
\end{proposition}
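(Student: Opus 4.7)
The plan is to show that the function $s \mapsto \inv_v(A_s,\iota_s,\lambda_s)$ is étale-locally constant on $S$, which combined with the connectedness of $S$ yields the result. Since the invariant is insensitive to extension of the residue field (as explicitly noted after the definitions in cases (b) and (c)), one may pass to strict henselizations, and by a standard specialization argument it suffices to verify: for any strictly henselian DVR $R$ over $\CCO_E$ with closed point $s_0$ and generic point $\eta$, and any $(A,\iota,\lambda) \in \CM_r(R)$, the equality $\inv_v(A_{s_0}) = \inv_v(A_\eta)$ holds. Such an $R$ is either of equal characteristic zero or of mixed characteristic $(0,p)$ for some prime $p$.

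For $v$ archimedean, $(-1)^{r_\varphi + n(n-1)/2}$ depends only on $r$, so the claim is trivial. For a non-archimedean place $v\mid \ell$ with $\ell$ invertible on $R$ (covering equal characteristic zero entirely, and the mixed characteristic case when $\ell \neq p$), the rational $\ell$-adic Tate module of $A/R$ spreads out to a lisse $\BQ_\ell$-sheaf of rank $n$ on $\Spec R$, with $\CCO_K$-action from $\iota$ and a $\lambda$-alternating form (using that a trivialization of $\BZ_\ell(1)$ can be fixed over the strictly henselian base). Its $v$-component is a hermitian $K_v$-bundle of rank $n$ whose discriminant defines a global section of the constant étale sheaf $F_v^\times / \mathrm{Nm}(K_v^\times)$, and hence takes equal values at $s_0$ and $\eta$.

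The remaining case is $v\mid p$ with $R$ of mixed characteristic $(0,p)$. Here definition (b) governs $\inv_v(A_\eta)$ while (c) governs $\inv_v(A_{s_0})$. To compare them, one invokes the crystalline comparison isomorphism for the $p$-divisible group $A[p^\infty]$ over $R$,
\begin{equation*}
V_p(A_{\bar\eta}) \otimes_{\BQ_p} B_{\mathrm{crys}} \;\simeq\; M_\BQ(A_{\bar s_0}) \otimes_{W(\bar k)} B_{\mathrm{crys}},
\end{equation*}
which is compatible with the $\CCO_K$-action, with Frobenius, and with the $\lambda$-alternating pairing. Restricting to the $v$-component and passing to the top exterior power reduces both sides to rank-one hermitian $(K_v \otimes_{\BZ_p} B_{\mathrm{crys}})$-modules; choosing a Frobenius eigenvector generator $x_0$ on the crystalline side (with $\uF x_0 = p^{n/2} x_0$) and any $K_v$-generator $y_0$ on the étale side, the Frobenius normalization forces the scalar relating them to have norm lying in $F_v^\times \subset (F_v\otimes_{\BQ_p} B_{\mathrm{crys}})^\times$. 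The resulting equality of discriminant classes, together with the injectivity of $F_v^\times/\mathrm{Nm}(K_v^\times) \hookrightarrow (F_v\otimes_{\BQ_p} B_{\mathrm{crys}})^\times/\mathrm{Nm}$, gives the desired identity.

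The main obstacle is this last mixed-characteristic step: the two prescriptions look quite different at a formal level, one extracting a discriminant from a Galois representation and the other from a Frobenius eigenvector in an isocrystal, and matching them requires careful bookkeeping of the Frobenius normalization $p^{n/2}$ and its interaction with the norm map on $K_v\otimes_{\BZ_p} B_{\mathrm{crys}}$. Once this is unwound, constancy of $\inv_v$ along every specialization is established, and the local constancy on $S$ follows.
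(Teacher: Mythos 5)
Your reduction to strictly henselian DVRs is reasonable in spirit, but the assertion that ``such an $R$ is either of equal characteristic zero or of mixed characteristic $(0,p)$'' is false, and this is where a genuine gap appears. A strictly henselian DVR over $\CCO_E$ can perfectly well be of \emph{equal characteristic} $p$ --- for instance $\bar\BF_p[[t]]$ with $\CCO_E$ acting through its residue field at a place over $p$. Equivalently: if the connected scheme $S$ lies entirely in the special fiber over some $\nu\mid p$, then every specialization on $S$ is an equal-characteristic-$p$ specialization, and you have offered no argument for constancy of $\inv_v$ with $v\mid p$ in that situation. Your $\ell$-adic lisse sheaf argument does not apply (it needs $\ell$ invertible), and the crystalline comparison isomorphism you invoke is a statement about an abelian variety over a mixed-characteristic ring, so it also does not apply.

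This is exactly the case the paper treats separately with a different tool. There one reduces to $S$ a smooth affine curve over an algebraically closed field $k$ of characteristic $p$, lifts $(S,F_S)$ to a smooth formal scheme $(T,F_T)$ over $W(k)$, and evaluates the Dieudonn\'e \emph{crystal} of $A$ on the PD-embedding $S\incl T$; the resulting module $H$ is an $F$-crystal over $T$ whose fiber at $s$ recovers $M(A_s)$, and after running the discriminant construction of \S 3(c) with $\CO_T$ in place of $W(k)$ one sees that $\inv_v(A_s,\iota_s,\lambda_s)$ is the value at $s$ of a section of a lisse $\BZ_p$-sheaf on $S$, hence locally constant. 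Note that the lift here is of the \emph{base} $S$, not of the CM-triple $(A,\iota,\lambda)$ --- which in general cannot be lifted to characteristic zero (e.g.\ when $p$ ramifies in $K$). To complete your proof you would need to supply this missing step, or some equivalent replacement for it; the rest of your argument (archimedean, $\ell\neq p$ via lisse sheaves, mixed characteristic $v\mid p$ via the crystalline comparison applied to $\bigwedge^n$ and untwisting) matches the paper's.
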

\begin{proof}
 The assertion is trivial for archimedean places and for places split in $K$. It is obvious  for places over $\ell$ invertible in  $\mathcal O_S$ 
because then the $\ell$-adic Tate module $V_\ell (A_s)$ is the fiber of a lisse $\ell$-adic sheaf on $S$. The triple $(A, \iota, \lambda)$ is defined over a scheme of finite type over $\Spec\, \BZ$, hence we may assume that $S$ has this property. In addition, we may assume that $S$ is reduced. By the previous remarks, we are done if $S$ is of finite type over $\Spec\,\BQ$.

Now suppose that $p \cdot \mathcal O_S = 0$ and that $v|p$. Then we may assume that $S$ is a scheme of finite type over an algebraically
closed field $k$, and then further that $S$ is a smooth affine curve. But then we may choose a lifting $(T, F_T)$ of
$(S, F_S)$ over $W(k)$ and consider the value $H$ of the crystal of $A$ on the $PD$-embedding $S \incl T$.
Then the Dieudonn\'{e} module of the fiber $A_s$ at a point $s$ is equal to the fiber at $s$ of $H$, and the invariant 
$\inv_v (A_s, \iota_s, \lambda_s)$ depends on the value at $s$ of a section of a lisse 
$\BZ_p$-sheaf on $S$, defined by the analogous procedure as above, replacing $W(k)$ by $\mathcal O_T$. Hence it is locally constant in this case.

Finally, assume that $S$ is flat and of finite type over $\Spec\,\BZ$. By what precedes, we have to prove that $\inv_v$ remains unchanged under specialization along a DVR of unequal characteristic $(0, p)$ when $v|p$. Thus let $\CCO$ be a complete DVR with residue field $k$ of characteristic  $p$ and fraction field $L$ of characteristic $0$.  Let $(\tilde A, \tilde \iota, \tilde \lambda)$ be a CM-triple of type $r$ over $O$  lifting $(A, \iota, \lambda)$ over $k$. 
By $p$-adic Hodge theory, there is a canonical isomorphism
\begin{equation}
V_p(\tilde A_L)\otimes_{\BQ_p} B_{\rm crys}\simeq M(A)_\BQ\otimes_{W(k)_\BQ} B_{\rm crys} \, ,
\end{equation}
compatible with all structures on both sides, in particular, with the Frobenii, with the $K$-actions on both sides and with  the polarization forms, cf. \cite{Fa, T}. Here $B_{\rm crys}$ denotes Fontaine's period ring, cf. \cite{Fo}. Moreover, after extension of scalars under the inclusion $B_{\rm crys}\subset  B_{\rm dR}$, this isomorphism is compatible with the filtrations on both sides. 

Decomposing both sides with respect   to the actions of $F\otimes\BQ_p$, we obtain, for any place $v\vert p$ of $F$ that does not split in $K$, corresponding isomorphisms of 
$K_v$-modules
$$
V_v(\tilde A_L)\otimes_{\BQ_p} B_{\rm crys}\simeq M(A)_{\BQ, v}\otimes_{W(k)_\BQ} B_{\rm crys} \, ,
$$
where $V_v(\tilde A_L)$ is the summand  of $V_p(\tilde A_L)$  corresponding to $v$ in the product decomposition $ F\otimes_\BQ \BQ_p =  \prod\nolimits_{w \vert p} F_w$, and where the other notation is taken from the definition of the local invariant at $v$, given  in c) above. 

Let $ S_{v} = \bigwedge^n_{K_v} \, V_v(\tilde A_L) \, $.
Then we obtain an isomorphism between free $K_v\otimes_{\BQ_p}B_{\rm crys}$-modules of rank one, 
\begin{equation}\label{isowedge}
S_v\otimes_{\BQ_p} B_{\rm crys}\simeq N_{\BQ, v}\otimes_{W(k)_\BQ} B_{\rm crys} \, .
\end{equation}
Now we saw in the course of the definition of $\inv_v(A, \iota, \lambda)$ above, that  $N_{\BQ, v}={N_{0, v}}(\frac{n}{2})$, where $N_{0, v}$ is a multiple of the unit object in the category of filtered Dieudonn\'e modules (even as a {\it filtered} Dieudonn\'e module). Untwisting and taking  on both sides of \eqref{isowedge} the subsets in the $0$-th filtration part where the Frobenius acts trivially, we obtain an isomorphism of Galois modules
 \begin{equation*}
S_v\simeq {\bar N_{0, v}}(\frac{n}{2})\, ,
\end{equation*}
where ${\bar N_{0, v}}$ is the Galois representation corresponding to ${N_{0, v}}$, cf.~\cite{Fo}. 
By the functoriality of this isomorphism, it is compatible with the hermitian forms on these one-dimensional $K_v$-vector spaces. It therefore follows from the definition of the local invariant that 
\begin{equation}
\inv_v(A, \iota, \lambda)=\inv_v\big((\tilde A, \tilde \iota, \tilde \lambda)_L\big)\, .
\end{equation}
 \end{proof}
 \begin{remark}
The last part of the proof above is analogous to the proof in \cite{K} that the {\it Kottwitz invariant} is trivial. 
\end{remark}

 \begin{question}\label{conj.prodform}
Let $k$ be any field and consider a  CM-triple $(A, \iota, \lambda) \in \CM_r (k)$. 
When is the product formula satisfied, 
\begin{equation*}
\prod_v  \inv_v (A, \iota, \lambda)= 1 ?
\end{equation*}
\end{question}
Note that one can suppose in this question that $k$ is algebraically closed.  This question looses its sense when the generalized CM-type cannot be read off from $(A, \iota, \lambda)$. This happens for instance when $F=\BQ$ and ${\rm char}\, k=p$, where $p$ ramifies in $K$, cf. \cite{PRS}. Indeed, in this case the two $\BQ_p$-embeddings of $K$ 
into $\bar\BQ_p$ induce identical homomorphisms from $\CCO_K$ into $k$, and hence the generalised CM-type for $(A, \iota, \lambda)$ can be changed  arbitrarily, without violating the Kottwitz condition \eqref{signature.condition}.

\begin{proposition}\label{prod.formula}
 The product formula is satisfied in the following cases.

\smallskip

 \noindent (i) If ${\rm char} \, k = 0$.
 
 \smallskip
 
\noindent (ii) If ${\rm char} \, k = p > 0$, and $(A, \iota, \lambda)$ can be lifted as an object of $\CM_r$ to a DVR with 
residue field $k$ and fraction field of characteristic zero. More generally, the same is true if there exists
an $\CCO_K$-linear isogeny $\alpha : A' \rightarrow A$, such that $A'$ can be lifted to characteristic zero in the previous sense,
compatible with the isogeny action $K \rightarrow {\rm End}(A') \otimes \BQ$ and the polarization 
$\alpha^* (\lambda)$ of $A'$, provided that $A'$ is of generalized CM-type $r'$ with 
$r'_\varphi \equiv r_\varphi \, \, {\rm mod} \, 2,  \forall \varphi$.
\end{proposition}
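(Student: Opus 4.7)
The plan for (i) is to reduce to $k = \BC$ and identify the local invariants $\inv_v^\natural(A,\iota,\lambda)$ with the local discriminants of the $K/F$-hermitian form on $W := H_1(A(\BC), \BQ)$ coming from $\lambda$; once this identification is in place at every $v$, the product formula follows from Hilbert reciprocity applied to $\disc (\ ,\ ) \in F^\times$ and the idele class character $\chi = \prod_v \chi_v$ attached to $K/F$.

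Since the invariants are preserved under base change, I may pass to an algebraic closure $\bar k$, and by Proposition \ref{locconst} further reduce, along any embedding $\bar k \hookrightarrow \BC$, to $k = \BC$. Fixing a totally imaginary $\delta \in K^\times$, the Riemann form of $\lambda$ on $W$ can be written as $E = {\rm Tr}_{K/\BQ}(\delta^{-1}(\ ,\ ))$ for a unique non-degenerate $K/F$-hermitian form $(\ ,\ )$ on $W$. At every non-archimedean place $v \vert \ell$, the Betti-to-\'etale comparison $V_\ell(A) \simeq W \otimes_\BQ \BQ_\ell$ is compatible with all the relevant structures and identifies $(\ ,\ )_v$ with the form used to define $\inv_v^\natural$, so $\inv_v^\natural = \disc (\ ,\ )_v$ holds directly from the definition. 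At an archimedean $v$ with complex embedding $\varphi = \varphi_v$, I would use the Hodge decomposition of $W \otimes_\BQ \BC$ together with \eqref{signature.condition} to show that the $\BC$-hermitian form $(\ ,\ )_v$ has signature $(p_v, q_v)$ with $q_v \equiv r_\varphi \pmod 2$ (possibly flipped by the sign of $\delta_v$, but since $n$ is even this has no effect on the mod-$2$ parity of either entry). A direct computation then yields $\chi_v(\disc_v) = (-1)^{r_\varphi + n(n-1)/2} = \inv_v(A,\iota,\lambda)$. The main technical point is the archimedean sign bookkeeping — how the positivity of the polarization and the choice of $\delta_v$ pin down the signature of $(\ ,\ )_v$ in terms of the Kottwitz invariants $r_\varphi$.

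For (ii), I first reduce the isogeny version to a direct lift. An $\CCO_K$-linear isogeny $\alpha: A' \to A$ with $\alpha^*\lambda$ equal to the polarization of $A'$ induces $K_v$-linear, hermitian-form-preserving isomorphisms on rational $\ell$-adic Tate modules for $\ell \neq p$ and on rational Dieudonn\'e modules at $v \vert p$, so $\inv_v(A', \iota', \alpha^*\lambda) = \inv_v(A, \iota, \lambda)$ at every non-archimedean place. At archimedean $v$, the hypothesis $r'_\varphi \equiv r_\varphi \pmod 2$ gives $(-1)^{r'_\varphi + n(n-1)/2} = (-1)^{r_\varphi + n(n-1)/2}$, hence equality of the archimedean invariants. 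Thus it suffices to treat the case where $A$ itself lifts to $(\tilde A, \tilde\iota, \tilde\lambda)$ over a DVR $\CCO$ with residue field $k$ and characteristic-$0$ fraction field $L$. By Proposition \ref{locconst}, whose proof already handles specialization at $v \vert p$ via $p$-adic Hodge theory, one has $\inv_v(A, \iota, \lambda) = \inv_v(\tilde A_L, \tilde\iota_L, \tilde\lambda_L)$ at every place, and applying part (i) to the lift closes the argument.
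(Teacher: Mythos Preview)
Your proposal is correct and follows essentially the same route as the paper. For (i), both you and the paper reduce to $k=\BC$, interpret $\inv_v$ at finite places as the local discriminant of the global hermitian form on $H_1(A,\BQ)$ via the Betti--\'etale comparison, handle the archimedean places by computing the signature of this form in terms of the $r_\varphi$, and conclude by the product formula for hermitian spaces. The paper defers the archimedean signature computation to an explicit argument in section~4 (comparing the two complex structures $J$ and $J_\delta$ on $U\otimes_\BQ\BR$), whereas you acknowledge this as the ``main technical point'' and sketch it more loosely via the Hodge decomposition; your parity statement $q_v\equiv r_\varphi\pmod 2$ is exactly what is needed for the sign $(-1)^{r_\varphi+n(n-1)/2}$, since $n$ is even. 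For (ii), both arguments invoke Proposition~\ref{locconst} to equate invariants of the special and generic fibers of the lift and then apply (i); your reduction of the isogeny addendum to the direct-lift case via invariance of the rational Tate/Dieudonn\'e data is exactly what the paper means by ``local invariants only depend on the isogeny class.'' One small imprecision: the reduction from $\bar k$ to $\BC$ uses invariance of the definitions under base change of fields, not Proposition~\ref{locconst} itself.
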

\begin{proof} The  condition on the generalised CM-type in (ii) makes sense since in characteristic zero $K$ acts on the Lie algebra, and the CM-type of the Lie algebra can be read off from this action of $K$.

Let us first prove (i). We may assume first that $k$ is a field extension of finite type of $\BQ$, and then, by the invariance of our definitions under extension of scalars, that $k=\BC$. Then the first rational homology group $U=H_1(A, \BQ)$ is equipped with an action of $K$ and a symplectic form
$\langle\, , \, \rangle$ which, after extension of scalars to $\BQ_\ell$,  gives the $\ell$-adic Tate module of $A$.  Fix $\delta\in K$ and its associated standard CM-type $\Phi$ as in the beginning of the next section. By the same procedure 
as above, $U$ is equipped with a hermitian form $(\, ,\, )_U$,  comp. also \eqref{norm.hermit} below. Let $\inv_v(U)$ be the local invariants of this hermitian vector space, comp.~loc.~cit. They satisfy the product formula. By the compatibility with the $\ell$-adic 
Tate modules, it follows for any $v\in \Sigma_f(F)$ that $\inv_v(U)=\inv_v(A, \iota, \lambda)$. To complete the proof, we have to see that, for any archimedean place $v$, we have $\inv_v(U)=(-1)^{r_\varphi+n(n-1)/2}$, i.e., that ${\rm sig}((U, \ (\ ,\ )_U) = ((r_{\varphi}, r_{\bar\varphi}))_{\varphi\in \Phi}$. This is proved in the next section, right after \eqref{localinvU}. 

Now let us prove (ii). Let $(\tilde A, \tilde \iota, \tilde \lambda)$ be a CM-triple of type $r$ over a DVR $\CCO$ with residue field $k$ and fraction field $L$ of characteristic $0$ lifting $(A, \iota, \lambda)$. Then by Proposition \ref{locconst}, for every place $v$, 
\begin{equation}
\inv_v(A, \iota, \lambda)=\inv_v\big((\tilde A, \tilde \iota, \tilde \lambda)_L\big)\, .
\end{equation}
 Now the product formula for $(\tilde A, \tilde \iota, \tilde \lambda)_L$, valid by (i),  implies the assertion. 
 
 The addendum in (ii) follows easily by observing that the local invariants only depend on the isogeny class of the CM-triple in the sense made precise in the statement. 
\end{proof}
\begin{corollary}
Let $k$ be a field of characteristic $p>2$ such that the CM field $K$ is absolutely unramified at $p$. Then the product formula is valid for any CM-triple $(A, \iota, \lambda) \in \CM_r (k)$. 
\end{corollary}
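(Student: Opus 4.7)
The plan is to reduce the corollary to Proposition~\ref{prod.formula}(ii) by showing that, under the two hypotheses ($p>2$ and $K$ absolutely unramified at $p$), the triple $(A,\iota,\lambda)$ itself lifts to a CM-triple of type $r$ over a mixed-characteristic DVR; one can then take the isogeny $\alpha=\mathrm{id}$ and $r'=r$. First, I would reduce to the case $k=\bar{\BF}_p$ algebraically closed, since the local invariants are insensitive to base change to an algebraic closure, and the product formula is invariant under such base change.

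The central tool is Serre--Tate deformation theory: lifting $(A,\iota,\lambda)$ to $W(k)$ is equivalent to lifting $(X,\iota_p,\lambda_p)$, where $X=A[p^\infty]$, together with its induced $\CCO_K\otimes\BZ_p$-action and polarization. Since $K$ is absolutely unramified at $p$, the ring $\CCO_K\otimes\BZ_p=\prod_{\mathfrak{P}\mid p}\CCO_{K_{\mathfrak{P}}}$ is a product of unramified extensions of $\BZ_p$; correspondingly $X=\prod_{\mathfrak{P}}X_{\mathfrak{P}}$, and the generalized CM-type $r$ restricts to a ``partial signature'' on each factor via the embeddings $K\hookrightarrow\bar{\BQ}_p$ inducing $\mathfrak{P}$. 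The polarization $\lambda_p$ pairs $X_{\mathfrak{P}}$ with $X_{\bar{\mathfrak{P}}}$ through Cartier duality.

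Next, I would lift each factor separately. For primes $\mathfrak{P}$ with $\mathfrak{P}\neq\bar{\mathfrak{P}}$, one can lift $X_{\mathfrak{P}}$ freely to a $p$-divisible $\CCO_{K_{\mathfrak{P}}}$-module over $W(k)$ satisfying the prescribed partial signature, and take $X_{\bar{\mathfrak{P}}}$ to be (a twist of) its Serre dual; the relevant deformation functor is formally smooth over $W(k)$ when $K_{\mathfrak{P}}/\BQ_p$ is unramified, because the Kottwitz signature condition then cuts out a smooth locus in the Grassmannian-type local model. For primes $\mathfrak{P}=\bar{\mathfrak{P}}$ (necessarily inert in $K/F$, since $K$ is unramified above $p$), the group $X_{\mathfrak{P}}$ carries a self-pairing that must be preserved by the lift; here the polarized local model is again formally smooth, thanks to unramifiedness together with $p>2$, the latter ensuring that no $2$-torsion pathology obstructs the simultaneous lift of the polarization. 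Assembling these factor-wise lifts yields a lift of $(X,\iota_p,\lambda_p)$ over $W(k)$ of the correct generalized CM-type.

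By Serre--Tate and Grothendieck's algebraization (available because the formal lift is polarized), this produces a CM-triple $(\tilde A,\tilde\iota,\tilde\lambda)\in\CM_r(W(k))$ lifting $(A,\iota,\lambda)$, so Proposition~\ref{prod.formula}(ii) applies directly. The main obstacle in this plan is justifying the formal smoothness of the polarized deformation space of $p$-divisible groups with $\CCO_K$-action satisfying the signature condition $r$; this is standard Rapoport--Zink-style local-model input in the unramified situation, but it is exactly the place where both hypotheses --- unramifiedness of $K$ at $p$ (so that the Kottwitz condition becomes a smooth condition on the Lie algebra) and $p>2$ (so that the polarization deforms smoothly) --- are used in an essential way.
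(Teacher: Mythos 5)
Your overall strategy agrees with the paper's: reduce to Proposition~\ref{prod.formula}(ii) by producing a lift of $(A,\iota,\lambda)$ as an object of $\CM_r$ over a mixed-characteristic DVR. The paper does this in one line by citing G\"ortz's flatness theorem \cite{Goertz1}; you instead sketch a direct Serre--Tate/local-model argument. The skeleton (reduce to $\bar{\BF}_p$, decompose $A[p^\infty]$ according to $\CCO_K\otimes\BZ_p=\prod_{\mathfrak P}\CCO_{K_{\mathfrak P}}$, lift factor-wise, algebraize via the polarization) is the right framework, and your noting that the force of the hypotheses is concentrated in the deformation-theoretic step is correct.

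However, your key claim --- that the polarized deformation functor with Kottwitz condition is \emph{formally smooth} over $W(k)$ in the unramified situation --- is not true at the level of generality the corollary requires, and this is a genuine gap. The stack $\CM_r$ places no constraint on the kernel of $\lambda$; at $p$ the kernel determines a (possibly non-hyperspecial) parahoric level, and for general even $n$ the corresponding unramified local model is \emph{flat} over $\CCO_E$ but typically \emph{not smooth} (its special fiber is a union of Schubert varieties). Smoothness only holds at hyperspecial level, i.e.\ when $\lambda$ is prime to $p$. Thus for a CM-triple whose polarization has $p$-torsion in its kernel, one cannot conclude liftability over $W(k)$ by a smoothness argument. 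What is true and what suffices is exactly G\"ortz's flatness theorem: flatness of the local model gives flatness of the relevant integral model, the generic fiber is dense, and hence any $k$-point lifts over \emph{some} DVR with fraction field of characteristic zero (possibly a finite extension of $W(k)$), which is all that Proposition~\ref{prod.formula}(ii) asks for. If you replace ``formally smooth over $W(k)$'' by ``flat over $\CCO_E$ (G\"ortz)'' and conclude via the density-of-the-generic-fiber argument, your proof lands exactly where the paper's does.
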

\begin{proof} This is a consequence of G\"ortz's flatness  theorem \cite{Goertz1} which implies that any such CM-triple can be lifted to characteristic zero. 
\end{proof}
\section{Formulation of the moduli problem}

In this section we fix a CM-field $K$ with totally real subfield $F$ and an element $\delta\in K^\times$ with 
$\bar\delta= - \delta$.  This element 
determines a (standard) CM type $\Phi$ by the condition
$$\Phi = \{ \ \varphi \in \Hom_{\BQ}(K,\bar\BQ)\mid \Im(\varphi(\delta))>0\ \}.$$ 
Let $r$ be a generalized CM-type
of rank $2$ with reflex field $E=E(r)$. 
We also fix a function $\htt$ on the set of  prime ideals of $F$,
\begin{equation}
 \htt : \Sigma_f(F) \lra \{0, 1, 2\},\quad  \, {\bf p} \mapsto \htt_{\bf p} \, ,
\end{equation}
with finite support contained in the set  of  prime ideals that are non-split in $K/F$.
We also sometimes write $\htt_{\bf q}$ for $\htt_{\bf p}$, where ${\bf q}$ denotes
the prime ideal of $K$ over  ${\bf p}$, and we let 
$$\frak n = \frak n(\htt) = \prod_{\substack{{\bf q}\\ \htt_{\bf q} \ne 0}} {\bf q}.$$

\begin{definition}\label{stackalmostprinc} Given data $(K/F, r, \htt)$, 
let 
$\CM_{r, \htt}$ be the DM-stack over $({\rm Sch} / \CCO_E)$ with
\begin{equation*}
 \CM_{r, \htt}(S) = \text{category of  triples } \, (A, \iota, \lambda) \, \text{ of CM-type $r$ over $S$},
\end{equation*}
satisfying the following  condition,   

\smallskip

\noindent {\it The triple $(A, \iota, \lambda)$ of CM-type $r$ is almost principal with $\Ker \lambda \subset A[\iota(\frak n)]$ and}
\begin{equation}\label{ker.rank}
\vert\Ker \, \lambda \vert = \prod\nolimits_{{\bf q}} N({\bf q})^{ \htt_{\bf q}}\, .
\end{equation}
Here $N({\bf q}) = |O_K/{\bf q}|$. 
\end{definition}
This DM stack is not connected. We further decompose it as follows. Let $\big (V, (\, ,\, )_V\big )$ be a hermitian vector space of dimension $2$ over $K$ such that $V\otimes_{K,\varphi} \BC$
has signature $(r_\varphi, r_{\bar \varphi})$, for every $\varphi \in \Phi$.  Note that this is consistent with (\ref{signature}). We introduce the DM stack $\CM_{r, \htt, V}$ of triples $(A, \iota, \lambda)\in \CM_{r, \htt}$ such that 
\begin{equation}\label{inv.condition}
 \inv_v  (A, \iota, \lambda) = \inv_v (V) \, , \quad \forall v\in \Sigma(F) \, .
\end{equation}
 By Proposition \ref{locconst}, this is an open and closed substack of $\CM_{r, \htt}$. Note that the stack $\CM_{r, \htt, V}$ may be non-flat at certain places $\nu$ of $E$. Let $\nu$ lie above a prime number $p$. Then non-flatness can occur  if  $p$ ramifies in $K$, or if $p$ is divided by prime ideals of $F$ in the 
support of $\htt$.

Let $G$ be the group of unitary similitudes of $V$ with 
similitude factor in $\BQ$, i.e., the linear algebraic group over $\BQ$, with values in a $\BQ$-algebra $R$ given by
\begin{equation*}
 G(R) = \{ g \in \GL_{K \otimes R}(V \otimes_\BQ R) \mid (gv, gw) = c(g) \cdot (v,w), \ c(g) \in R^\times \} .
\end{equation*}
In particular, if we let 
$$\Phi_{a} = \{ \ \varphi \in \Phi \mid r_{\varphi} =a\ \},$$
then\footnote{
We take the hermitian form on $\BC^2$ to be $\text{\rm diag}(1,-1)$ for the $\Phi_1$ factors and $\pm 1_2$ for the other factors.} 
$$G(\mathbb R) \simeq \bigg(\ \prod_{\varphi\in \Phi_1} \text{\rm GU}(1,1)\times \prod_{\varphi\in \Phi_2\cup \Phi_0} \text{\rm GU}(2)\ \bigg)_0$$
where the subscript $0$ denotes the subgroup for which all the scale factors coincide. 
For $\BS = \text{\rm Res}_{\BC/\mathbb R}\mathbb G_m$, let
\begin{equation}\label{defofh}
\bold h: \BS \lra G_{\mathbb R}, \qquad \bold h: z\longmapsto (\bold h_{\varphi}(z))_{\varphi\in \Phi},
\end{equation}
where
$$\bold h_{\varphi}(z) = \begin{cases}
\begin{pmatrix} z&{}\\{}&\bar z\end{pmatrix} &\text{ for $\varphi \in \Phi_1$,}\\
\nass
z\cdot 1_2 &\text{ for $\varphi \in \Phi_2$,}\\
\nass
\bar z\cdot 1_2 &\text{ for $\varphi \in \Phi_0$.}\\
\end{cases}
$$
Note that $c\circ \bold  h(z) = |z|^2$.  Also note that the image of the scale map is
$$c(G(\mathbb R)) =\begin{cases} \mathbb R^\times &\text{ if $\Phi_2 \cup \Phi_0$ is empty,  and}\\
\nass
\mathbb R^\times_{>0}&\text{ otherwise.}
\end{cases} 
$$  
We let $G(\BR)^+$ be the subgroup of $G(\BR)$ for which the scale is positive.
\begin{proposition}\label{prop.nonempty}

  Assume that all finite places $v$ of $F$ with $v\vert 2$ are unramified in $K/F$.   

\smallskip

\noindent (i) The set $\CM_{r, \htt, V}(\BC)$ of complex points of the
moduli space $\CM_{r, \htt, V}$ is non-empty 
if and only if  the following compatibility conditions between the function $\htt$ and the invariants of $V$ are satisfied.
\begin{enumerate}
\item  If $\htt_{\bold p_v}=0$ and $v$ is inert in $K/F$, then $\inv_v(V)=1$. 
\item  If $\htt_{\bold p_v}=2$, then $\inv_v(V)=1$. 
\item If $\htt_{\bold p_v}=1$, then $v$ is inert in $K/F$ and $\inv_v(V)=-1$. 
\end{enumerate}

\smallskip

\noindent (ii) In this case 
$\CM_{r, \htt, V} \otimes_{\CCO_E} \BC$ is the Shimura variety ${\rm Sh}_C(G, \bold X)$ associated to the pair $(G, \bold X)$, where $\bold X$ is the $G(\BR)$-conjugacy class of 
homomorphisms $\bold h: \BS\to G_\BR$ given by (\ref{defofh}),  and where the compact open subgroup $C$ of $G(\BA^\infty)$ is the stabilizer of an $O_K$-lattice
in $V$ satisfying conditions (\ref{AP1}) and (\ref{AP2})  in Lemma~\ref{lattice.lemma} below. 
\end{proposition}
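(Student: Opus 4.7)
The plan is to translate the moduli problem over $\BC$ into linear-algebraic data via the standard Hodge-theoretic dictionary, and then to reduce both assertions to the existence and $G(\BA^\infty)$-orbit classification of lattices in the hermitian $K$-space $V$.

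First, a triple $(A,\iota,\lambda)\in\CM_{r,\htt,V}(\BC)$ corresponds to a quadruple $(L,\iota,\langle\,,\,\rangle,J)$, where $L=H_1(A,\BZ)$ is an $\CCO_K$-module of $\BZ$-rank $4[F:\BQ]$, $\langle\,,\,\rangle$ is the $\BZ$-valued alternating Riemann form induced by $\lambda$ (satisfying $\langle\iota(a)x,y\rangle=\langle x,\iota(\bar a)y\rangle$), and $J$ is the complex structure on $L\otimes_\BZ\BR$ identifying it with $\Lie A$. Using the fixed $\delta\in K^\times$, one converts $\langle\,,\,\rangle$ into a unique $K/F$-hermitian form $(\,,\,)_U$ on $U:=L\otimes_\BZ\BQ$ via
\begin{equation*}
\langle x,y\rangle={\rm Tr}_{K/\BQ}\bigl(\delta^{-1}(x,y)_U\bigr).
\end{equation*}
The Kottwitz condition \eqref{signature.condition} translates into the assertion that $(U\otimes_{F,v}\BR,(\,,\,)_U)$ has signature $(r_{\varphi_v},r_{\bar\varphi_v})$ at every archimedean $v$, so the archimedean local invariants of $(A,\iota,\lambda)$ automatically match those of $V$. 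A direct comparison of the $\ell$-adic Tate module with $L\otimes\BZ_\ell$ then identifies the finite local invariants of $(A,\iota,\lambda)$ from Section~\ref{section3} with those of $(U,(\,,\,)_U)$. Hence condition \eqref{inv.condition} is equivalent to a hermitian isometry $(U,(\,,\,)_U)\cong (V,(\,,\,)_V)$.

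Consequently, constructing a $\BC$-point of $\CM_{r,\htt,V}$ amounts to constructing an $\CCO_K$-lattice $L\subset V$ whose dual $L^\vee$ with respect to the form above satisfies $L\subset L^\vee$, is annihilated by $\frak n(\htt)$, and has $|L^\vee/L|=\prod_{\bf q}N({\bf q})^{\htt_{\bf q}}$, together with a compatible $J$ on $V\otimes_\BQ\BR$, which is automatically supplied by any $\bold h\in\bold X$. This existence question is purely local, and I would handle it prime by prime ${\bf p}\in\Sigma_f(F)$: at split ${\bf p}$, $V_{\bf p}$ is hyperbolic and admits a self-dual lattice (and necessarily $\htt_{\bf p}=0$ by hypothesis); at inert ${\bf p}$, $V_{\bf p}$ contains a self-dual lattice iff $\inv_v(V)=+1$, a lattice with cokernel of $({\bf q},{\bf q})$-type iff $\inv_v(V)=+1$, and a lattice with cokernel of $({\bf q},\CCO)$-type only when $\inv_v(V)=-1$ by the standard length-parity argument for hermitian $\CCO_{K_v}$-lattices; at ramified ${\bf p}$ the self-dual case is always available and the $\htt_{\bf p}=2$ case forces $\inv_v(V)=+1$, while $\htt_{\bf p}=1$ is obstructed by the same parity argument. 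This classification, made precise in Lemma~\ref{lattice.lemma}, yields the three compatibility conditions of part~(i); the hypothesis that no $v\mid 2$ be ramified in $K/F$ is what excludes the exceptional dyadic-ramified cases in the lattice classification.

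For part~(ii), fix a lattice $\Lambda\subset V$ as produced by Lemma~\ref{lattice.lemma} and let $C=\mathrm{Stab}_{G(\BA^\infty)}(\widehat\Lambda)$. For $(A,\iota,\lambda)\in\CM_{r,\htt,V}(\BC)$, any $K$-linear isometry $\eta\colon U\otimes\BA^\infty\to V\otimes\BA^\infty$ sends $L\otimes\widehat\BZ$ to $g\widehat\Lambda$ for some $g\in G(\BA^\infty)$, unique modulo $C$, because by the local classification above all $\CCO_K$-lattices of the prescribed type form a single $G(\BA^\infty)$-orbit. The complex structure $J$, transported to $V\otimes\BR$ via any hermitian isometry $U\otimes\BR\cong V\otimes\BR$, defines a point of $\bold X$ thanks to the signature calculation, and the remaining ambiguity in $\eta$ is exactly the action of $G(\BQ)$. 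Assembling these identifications gives $\CM_{r,\htt,V}(\BC)\cong G(\BQ)\backslash\bigl(\bold X\times G(\BA^\infty)/C\bigr)$, which is the complex uniformization of ${\rm Sh}_C(G,\bold X)$. The principal obstacle is the local lattice classification in the preceding paragraph: the case analysis of hermitian $\CCO_{K_v}$-lattices by their polarization co-discriminant, including the parity argument that rules out $\htt_{\bf p}=1$ except at inert ${\bf p}$ with $\inv_v(V)=-1$, is the substantive technical content, carried out in Lemma~\ref{lattice.lemma}.
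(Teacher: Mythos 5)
Your proposal is correct and follows essentially the same route as the paper: translate the $\BC$-point into $(U=H_1(A,\BQ),(\,,\,)_U,J)$, match the local invariants of $U$ with those of $V$ at finite places via the Tate module and at archimedean places via the Kottwitz condition, deduce a hermitian isometry $U\cong V$, and then invoke Lemma~\ref{lattice.lemma} for the existence and transitivity of lattices of the prescribed type, yielding both the compatibility conditions (i) and the adelic uniformization (ii). The only microscopic imprecision is the phrase "single $G(\BA^\infty)$-orbit'': since a similitude with non-unit factor rescales $M^\vee/M$, these lattices form a single orbit only under the subgroup $G(\BA^\infty)^0$ with $c(g)\in\widehat\BZ^\times$ (equivalently under $U(V)(\BA^\infty)$), which is exactly what the paper uses and what your application via an isometry $\eta$ actually requires.
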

\begin{proof}
Let $(A,\iota,\lambda)$ be a CM-triple of type $r$ over $\BC$.  Then $U= H_1(A,\BQ)$ is a $2$-dimensional $K$-vector space, and the Riemann form 
determined by $\lambda$ is an alternating, $\BQ$-bilinear form $\langle \ ,\ \rangle:U\times U \rightarrow \BQ$ such that 
$\langle \iota(a)x,y\rangle \, =\,  \langle x,\iota(\bar a)y\rangle $, for all $a\in K$. There is then a unique $K$-valued hermitian form $(\ ,\ )_U$ on $U$ such that   
\begin{equation}\label{norm.hermit}
 \langle x,y \rangle\  = \ \tr_{K/\BQ}(\delta^{-1}(x,y)_U),
\end{equation}
where $\delta = - \bar \delta \in K^\times$ is the element fixed at the beginning of this section. For each place $v$ of $F$, 
the hermitian space $(U, \ (\ ,\ )_U)$ has invariant
$$\inv_v(U, \ (\ ,\ )_U)=\chi_v(-\det((u_i,u_j)_U)) \in \{\pm 1\},$$
where $\{u_1, u_2\}$ is a $K$-basis for $U$ and $\chi_v(x)= (\delta^2,x)_v$ is the quadratic character associated to $K_v/F_v$. Here $(a, b)_v$ is the quadratic Hilbert symbol for $F_v$. 
By the compatibility of the $\BQ_\ell$-bilinear extension of the Riemann form and the form (\ref{Weilpairing}) arising from the Weil pairing
and by (\ref{inv.condition}), we have 
\begin{equation}\label{localinvU}
\inv_v(U, \ (\ ,\ )_U) = \inv_v(A,\iota,\lambda) = \inv_v(V, \ (\ ,\ )_V),
\end{equation}
for all finite places $v$.  On the other hand, under the isomorphism 
\begin{equation}\label{LieA}
\Lie(A)\ \overset{\sim}{\longleftarrow} \  U\otimes_\BQ\BR \ \overset{\sim}{\lra}\  \prod_{v\in \Sigma_\infty(F)} U\otimes_{F,v}\BR,
\end{equation}
the complex structure on $\Lie(A)$ induces a complex structure $J$ on $U\otimes_\BQ\BR$ which preserves each of the factors on the right. 
On the $v$-th factor on the right side of (\ref{LieA}), there is a complex structure 
$$J_\delta = \iota(\delta)\otimes N_{K/F}(\delta)_v^{-\frac12}.$$
Recall that $N_{K/F}(\delta)$ is totally positive. 
The signature condition (\ref{signature}) implies that, for $\varphi\in \Phi$,   $J_\delta = J$ when $r_{\varphi} =2$, $J_\delta = - J$ when $r_{\varphi}=0$, 
and $J_\delta$ has eigenvalues $\pm i$ on $(U\otimes_{F,v}\BR, J)$ when $r_{\varphi}=1$.  From this it follows that the signature
$$\text{\rm sig}(U, \ (\ ,\ )_U) = ((r_{\varphi}, r_{\bar\varphi}))_{\varphi\in \Phi}$$
coincides with that of $V$.  Together with (\ref{localinvU}), this implies that there is an isometry
$\eta:U\overset{\sim}{\lra} V$ of hermitian spaces over $K$.  Via $\eta$ and (\ref{LieA}), the action of $\BC^\times$ on $\Lie(A)$ yields a 
homomorphism $\bold h_A: \mathbb S\lra G_\BR$. It is conjugate by $G(\BR)^+$ to the homomorphism $\bold h$ defined by (\ref{defofh}).

Next, we must take into account the almost principal condition (i) in Definition \ref{stackalmostprinc}, which amounts to the following.
For convenience, we write $(\ ,\ )_V$ of the hermitian form on $V$.
\begin{lemma}\label{lattice.lemma} (i)
Let $L= H_1(A,\BZ) \subset U$ with dual 
lattice $L^\vee$ with respect to $\langle \ ,\ \rangle$.  
Let $M=\eta(L)$, resp.  $M^\vee=\eta(L^\vee)$, be the image of $L$, resp. $L^\vee$,  under $\eta$. 
Then $M$ and $M^\vee$ are  $O_K$-lattices in $V$ satisfying the conditions
\begin{equation}\label{AP1}
M\subset M^\vee \subset \frak n^{-1}M,
\end{equation}
and 
\begin{equation}\label{AP2}
|M^\vee/M| = \prod\nolimits_{{\bf q}} N({\bf q})^{ \htt_{\bf q}}\, .
\end{equation} 
Furthermore, 
\begin{equation*}
M^\vee= \{ \ x\in V\mid\  \delta^{-1}( x, M )_V \subset \partial_K^{-1}\ \},
\end{equation*}
with $\partial_K$ the different  of $K/\BQ$.

The subgroup 
$$G(\BA^\infty)^0 = \{ \ g\in G(\BA^\infty) \mid c(g)\in \widehat{\BZ}^\times\ \}$$ 
of $G(\BA^\infty)$ acts on the set of such lattices. 

\smallskip

\noindent (ii)  Assume that all finite places $v$ of $F$ with $v\vert 2$ are unramified in $K/F$.  
Then, for any finite place $v$ of $F$, a lattice in $V_v$ satisfying the local analogues of (\ref{AP1}) and (\ref{AP2})
is unique up to isometry. In particular, the isometry group $U(V)(F_v)$ acts transitively 
on the set of such lattices.
\end{lemma}

 \begin{proof}
For  a finite place $v\in \Sigma_f(F)$ of $F$,  choose $\delta_v\in K_v^\times$ such that $\bar\delta_v=-\delta_v$ and $\delta_v O_{K_v} = \partial_{K,v}$.  
Note that, for almost all places, we can take $\delta_v=\delta$. 
Then the lattice $M_v$ in $V_v$ has dual lattice
$$(M^\vee)_v = \{ \ x\in V\mid\  \delta_v\delta^{-1}( x, M_v )_V\subset O_{K,v}\ \},$$
with respect to the hermitian form $h_v:= \delta_v\delta^{-1}( \ , \  )_V$.  Note that the $2$-dimensional hermitian spaces $(V_v,h_v)$ and 
$(V_v,(\ ,\ )_V)$ are related to each other by scaling and hence are isometric.  Now apply the local theory of hermitian lattices. 

If $\htt_{\bold p_v}=0$, then $\mathfrak n_v=O_{K,v}$ so that $M^\vee_v=M_v$, and $M_v$ is unimodular with respect to $h_v$. 
 If $v$ is split in $K/F$, there is a unique $2$-dimensional hermitian space $V_v$, and a
self dual lattice $M_v$ in it is unique up to isometry.  
If $v$ is inert in $K/F$, then the space $(V_v,h_v)$ is split and the lattice $M_v$ is unique up to isometry, \cite{jacobowitz}, Theorem~7.1. 
If $v$ is ramified in $K/F$ (and hence non-dyadic by our assumption), the isometry class of $M_v$ is determined by $\det(M_v)\in F^\times_v/{\rm Nm}(K^\times_v)$,  
\cite{jacobowitz}, Proposition~8.1 (a).  But since the class of $V_v$ is already fixed, it follows that there is a unique isometry class of self-dual $M_v$'s in $V_v$. 

Next suppose that $\htt_{\bold p_v}=2$, so that $M_v^\vee = \pi_{K_v}^{-1} M_v$ and $M_v$ is $\pi_{K_v}$-modular. 
Note that, by our assumption about the support of the function
 $\htt$, 
the case where $v$ is split in $K/F$ is excluded.   If $v$ is inert in $K/F$, then the lattice $M_v$ is unique up to isometry, \cite{jacobowitz}, Theorem~7.1,
and the space $V_v$ is split. If $v$ is ramified in $K/F$, then the existence of a $\pi_{K_v}$-modular lattice implies that $V_v$ is split and, again, the 
lattice $M_v$ is unique up to isometry, \cite{jacobowitz}, Proposition~8.1 (b). 

Finally, suppose that $h_{\bold p_v}=1$. In particular, $M_v$ is not modular and has a Jordan decomposition of type $(1)\oplus (\pi_{K_v})$.
If $v$ is inert in $K/F$, it  follows that $V_v$ is anisotropic and that $M_v$ is unique up to isometry, \cite{jacobowitz}, Theorem~7.1. 
On the other hand, if $v$ is ramified in $K/F$, and hence non-dyadic,  there are no $\pi_{K_v}$-modular lattices of rank $1$, \cite{jacobowitz}, Proposition~8.1. 
so that the condition $h_{\bold p_v}=1$ cannot occur for ramified places $v$. 
\end{proof}

The previous lemma and its proof imply that the conditions (1)--(3) in Proposition \ref{prop.nonempty}, (i),  are equivalent to the existence of  an $O_K$-lattice $M$ in $V$ satisfying (\ref{AP1}) and (\ref{AP2}). We fix such a lattice  and let $C\subset G(\BA^\infty)$ be its stabilizer. 
Note that, by the lemma, $c(C) = c(G(\BA^\infty)^0)$ and the set of all lattices in $V$ satisfying (\ref{AP1}) and (\ref{AP2})
can be identified with $G(\BA^\infty)^0/C$.  Let $\bold X^+$ be the $G(\BR)^+$ conjugacy class of $\bold h$.  
The pair $(\bold h_A, M)\in \bold X^+\times G(\BA^\infty)^0/C$ depends on the choice of isometry $\eta$. Removing this 
dependence, we have a map
\begin{equation}\label{complex.points}
\CM_{r, \htt, V}( \BC) \lra U(V)(\BQ) \backslash \bold X^+ \times G(\BA^\infty)^0/C \overset{\sim}{\lra} G(\BQ)\backslash \bold X \times G(\BA^\infty)/C.
\end{equation}
Here note that $c(G(\BA^\infty)^0) = \widehat{\BZ}^\times$ and that 
$c(G(\BQ)\cap G(\BA^\infty)^0) = 1$ if $c(G(\BR))= \BR^\times_+$ and $\pm1$ if $c(G(\BR))= \BR^\times$.
It is easily checked that (\ref{complex.points}) is surjective and induces a 
bijection on the set of isomorphism classes on the left hand side.\end{proof}
\begin{remark}
 $\CM_{r, \htt, V}\otimes_{\CCO_E}E$ is the canonical model in the sense of Deligne of the Shimura variety ${\rm Sh}_C(G, \bold X)$, but we will not stop to show this here. 
\end{remark}
\begin{remark} \label{p-variantofmoduli}
We will also use the following variant of $\CM_{r, \htt, V}$. 
We fix a prime number $p$ and an $O_K$-lattice $M$ in $V$ satisfying (\ref{AP1}) and (\ref{AP2}), and let $C_M^p\subset G(\BA^{\infty, p})$ be the stabilizer of $M\otimes\widehat{\BZ}^p$ in $G(\BA^{\infty, p})$. 
Let $C^p \subset G(\BA^{\infty, p})$ be an open compact subgroup which is a subgroup of finite index in $C_M^p$. Let $\CCO_{E_{(p)}}$ be the localization of $\CCO_E$ at $p$. The variant 
$\CM_{r, \htt, V}( C^p)$ is the stack over $(Sch/\CCO_{E_{(p)}})$ which, in 
addition to $(A, \iota, \lambda)$ satisfying conditions (\ref{ker.rank}) and (\ref{inv.condition}), fixes a level structure ${\rm mod} \, C^p$, i.e. an isomorphism compatible with $\iota$ and with the alternating forms on both sides up to a unit in $\widehat{\BZ}^p$,
\begin{equation*}
 \wh T^p (A)  \simeq M\otimes\widehat{\BZ}^p \, {\rm mod} \, C^p \, ,
\end{equation*}
in the sense of Kottwitz \cite{K}. Here on the RHS, we use the alternating form 
\begin{equation}\label{altonV}
 \langle x,y \rangle_V  = \ \tr_{K/\BQ}(\delta^{-1}(x,y)_V)\, .
\end{equation}

Note that, due to the existence of the level structure, we only have to require the condition \eqref{inv.condition} for the places $v$ over $p$ --- for all other places it is automatic. If $C^p=C_M^p$, then by Lemma \ref{lattice.lemma}, $\CM_{r, \htt, V}( C^p)=\CM_{r, \htt, V}\otimes_{\CCO_E}\CCO_{E_{(p)}}$. 
\end{remark}

\section{Uniformizing primes}
In this section we consider the local situation. We fix a prime number $p$ and an algebraic closure $\bar \BQ_p$
of $\BQ_p$.  Let $F$  be a finite extension of $\BQ_p$ with $|F:\BQ_p|=d$,  and  let $K/F$ be an 
\'{e}tale algebra of rank $2$. We begin with the obvious local analogues of the definitions of section 2. 

A generalized CM-type $r$ of rank $n$ relative to $K/F$ is a function 
$$r: \Hom_{\BQ_p}(K, \bar\BQ_p)\lra \BZ_{\geqslant 0}, \qquad \,\, \varphi \mapsto r_\varphi,$$
such that $r_{\varphi}+r_{\bar\varphi} =n$ for all $\varphi$.  Here $\bar\varphi(a) = \varphi(\bar a)$ 
where $a\mapsto \bar a$ is the non-trivial automorphism of $K$ over $F$. 
The corresponding reflex field $E=E(r)$ is the subfield of $\bar\BQ_p$ fixed by 
$$\Gal(\bar\BQ_p/E):=\{ \tau\in \Gal(\bar\BQ_p/\BQ_p)\mid r_{\tau\varphi} = r_\varphi, \ \forall \varphi\}.$$
Let  $\CCO_E$ be the ring of integers of $E$ and let $\pi_E$ be a uniformizer of $E$. 

\begin{definition} A {\it triple of CM-type} $r$ over an $\CCO_E$-scheme $S$ is a triple $(X, \iota, \lambda)$, where
$X$ is a $p$-divisible group over $S$ of height $2nd$ and dimension $nd$,
$\iota : \CCO_K \rightarrow \End (X)$ is an action of the ring of integers of $K$ on $X$ satisfying the Kottwitz 
condition relative to $r$, and  $\lambda: X\rightarrow X^\vee$ is a quasi-polarization with Rosati involution inducing the non-trivial automorphism on $K/F$.
\end{definition}

\begin{definition} Such a triple is called {\it almost principal} if either $K=F\oplus F$ and $\lambda$ is principal,  or $K$ is a field
 and $\Ker \lambda$ is contained in  $X[\iota(\pi)]$, 
where $\pi=\pi_K$ denotes a uniformizer of $K$.  
\end{definition}
In particular, when $K$ is a field, $\Ker \lambda$ is a module over $\CCO_K/\pi\CCO_K$. We write the height of $\Ker\, \lambda$ in the 
form $fh$, where $f = [K^t : \BQ_p]$ is the degree of the maximal unramified subfield $K^t$ of $K$, so that $0\le h\le n$.  If $K= F\oplus F$, then $h=0$.

Now let $n$ be even.

Suppose that $k$ is an algebraically closed field of characteristic $p$ that is an $\CCO_E$-algebra.  Then, for a CM-triple $(X,\iota,\lambda)$ of type $r$
over $k$, the construction of 
c) of section 3, applied to the Dieudonn\'e module of $(X, \iota, \lambda)$,  yields an invariant
$$\inv_v(X,\iota,\lambda)^\natural \in F^\times/{\rm Nm}(K^\times),$$
and a sign 
$$\ep=\inv_v(X,\iota,\lambda)  = \chi(\inv_v(X,\iota,\lambda)^\natural)=\pm1,$$
where $\chi$ is the quadratic character attached to $K/F$.

For the rest of this section, we assume that $n=2$. 
For our description of $p$-adic uniformization,  it will be important to know when an 
almost principal CM-triple $(X, \iota, \lambda)$ 
over $k$ with given invariants $h$ and $\ep$ is unique up to isogeny.


\begin{definition}  \label{defunifprimes}
 \noindent (i) We call $(K / F, r, h, \ep)$  \emph{uniformizing data of the first kind}, if 
 $F  = \BQ_p$, $K$ is a field,  $r_{\varphi} = r_{\bar\varphi} = 1$,
\begin{equation*} \begin{aligned}
h & = \begin{cases}
     0, \quad \text{if $\quad K/F$ is ramified}\\
1, \quad \text{if $\quad K/F$ is unramified,}
    \end{cases}
 \end{aligned}
\end{equation*}
and $\ep=-1$. 

\smallskip

 \noindent (ii) We call $(K/F, r, h, \ep)$ \emph{uniformizing data of the second kind}, if $K/F$ is an unramified field 
extension  and $r$ is of the following form: there exists a half-system $\Phi^t$ of elements
of $\Hom_{\BQ_p} (K^t, \bar \BQ_p)$ such that
\begin{equation*}
r_\varphi =\begin{cases} 
0, \quad \text{if} \quad \varphi \vert K^t \in \Phi^t\\
2, \quad \text{if} \quad \varphi \vert K^t \notin {\Phi}^t.
          \end{cases}
\end{equation*}
Note that $[K^t : F^t] = 2$.  If $h = 0$, then $\ep=1$; if  $h = 1$, then $\ep=-1$.

\smallskip

 \noindent (iii) We call $(K/F, r, h,\ep)$ \emph{uniformizing data of the third kind}, if $K = F \oplus F$ so $\ep=1$, $h = 0$,  and
\begin{equation*}
r_\varphi =\begin{cases} 
0, \quad \text{if } \varphi  \text{ factors through the first summand of $K = F \oplus F$}\\
2, \quad \text{if } \varphi \text{ factors through the second summand of $K = F \oplus F$}.
          \end{cases}
\end{equation*}
\end{definition}

\begin{proposition}\label{uniformity}
 Fix uniformizing data $(K/F, r, h, \varepsilon)$ of the first, second, or third kind. Let $(X, \iota, \lambda)$ and
$(X', \iota', \lambda')$ be two almost principal CM-triples of type $(K/F, r, h, \varepsilon)$ over $k$. 
Then both $X$ and $X'$ are isoclinic $p$-divisible groups and there exists an $\CCO_K$-linear isogeny $\alpha : X \rightarrow X'$ such that
$\alpha^* (\lambda') = c\,\lambda$ with $c\in \BZ_p^\times$. \hfill \break
\end{proposition}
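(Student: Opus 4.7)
The plan is to pass to rational Dieudonn\'e modules and to show that in each of the three cases, the hermitian isocrystal attached to the CM-triple is determined up to isomorphism by the given uniformizing data $(K/F, r, h, \varepsilon)$. Throughout, let $N = M(X)_\BQ$ and $N' = M(X')_\BQ$ denote the rational Dieudonn\'e modules, viewed as rank-$2$ modules over $K\otimes_{\BQ_p}W(k)_\BQ$ with $\sigma$-linear Frobenius $\uF$ and polarization pairing.

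\emph{Step 1 (isoclinicity).} For uniformizing data of the third kind, the idempotents of $K=F\oplus F$ split $X=X_1\times X_2$; the Kottwitz signature forces $X_1$ to be \'etale and $X_2$ of multiplicative type (each of height $2d$), and the polarization identifies $X_2$ with $X_1^\vee$, so $X$ is a product of isoclinic pieces of slopes $0$ and $1$. For uniformizing data of the first kind, the Newton polygon of $X$ (height $4$, dimension $2$, with action of the quadratic field $K/\BQ_p$) must be either ordinary (slopes $0$ and $1$, each of multiplicity $2$) or supersingular. In the ordinary case $X$ decomposes as a product of an \'etale and a multiplicative piece, paired by $\lambda$, and a direct computation using the definition of $\inv_v$ in part c) of section \ref{section3}, applied to the generator $x_0\in \bigwedge^2_{K_v\otimes W(\bar k)} N$, yields $\inv_p=+1$; hence the hypothesis $\varepsilon=-1$ forces $X$ to be isoclinic of slope $1/2$, i.e.\ supersingular. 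For uniformizing data of the second kind, one decomposes $N$ according to the factorization of $F\otimes_{\BQ_p}W(k)_\BQ$ indexed by embeddings of $F^t$; the signature $(0,2)$ or $(2,0)$ at each embedding determines the Hodge filtration completely, and together with the assumption on $(h,\varepsilon)$ one computes the slopes directly, obtaining again an isoclinic group (slopes $0,1$ when $h=0$, $\varepsilon=+1$, and slope $1/2$ when $h=1$, $\varepsilon=-1$).

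\emph{Step 2 (uniqueness up to $K$-linear isogeny).} Over the algebraically closed field $k$, an isoclinic isocrystal is determined up to isomorphism by its slope and height. Enriching this with the $\CCO_K$-action and the Kottwitz condition, I claim that the rank-$2$ $K\otimes_{\BQ_p} W(k)_\BQ$-module $N$ with its Frobenius is unique up to isomorphism, which can be seen by exhibiting a standard model (a product of one-dimensional CM $\CCO_K$-modules in cases 2 and 3, and the basic $\CCO_K$-module of the Drinfeld type in case 1, as in \cite{KR}). This produces an $\CCO_K$-linear quasi-isogeny $\alpha_0:X\to X'$, which can be taken to send $M(X)$ into $M(X')$ after multiplying by a suitable power of $p$.

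\emph{Step 3 (adjusting the polarization).} The pullback $\alpha_0^\ast(\lambda')$ is an $\CCO_K$-linear quasi-polarization on $X$ whose Rosati involution induces conjugation on $K$. The set of such quasi-polarizations is a torsor under the group of $\CCO_K$-linear quasi-automorphisms of $X$ acting by scaling the hermitian form on $N$, and hence is parametrized, modulo $F^\times$, by hermitian forms on the rank-one module $\bigwedge^2_K N$. By construction (and by Proposition \ref{locconst} applied formally to $p$-divisible groups), both $\lambda$ and $\alpha_0^\ast(\lambda')$ give the same invariant $\varepsilon$, so their hermitian forms agree up to an element of ${\rm Nm}(K^\times)$; composing $\alpha_0$ with an appropriate $\CCO_K$-linear quasi-automorphism of $X'$ yields an isogeny $\alpha$ with $\alpha^\ast(\lambda')=c\,\lambda$ for some $c\in F^\times$. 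Finally, the almost-principal condition rigidifies the lattice structure: both $\Ker\lambda$ and $\Ker(\alpha^\ast\lambda')$ have the prescribed size $p^{fh}$, which forces $c$ to be a unit in $\BZ_p^\times$ after absorbing norms from $K^\times$ into $\alpha$.

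The main obstacle is the computation in Step 1 that rules out non-isoclinic Newton polygons once $\varepsilon=-1$; it is precisely here that the hermitian invariant at $p$ enters in an essential way, via the behavior of $\uF$ on $\bigwedge^2_{K_v\otimes W(\bar k)} N$. Once isoclinicity is secured, Steps 2 and 3 are essentially bookkeeping with standard isocrystal theory and the Jacobowitz classification of hermitian lattices already invoked in Lemma \ref{lattice.lemma}.
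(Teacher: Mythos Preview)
Your overall architecture matches the paper's: for the first kind you rule out the ordinary Newton polygon by computing that it would force $\inv_p=+1$, contradicting $\varepsilon=-1$; this is exactly the paper's argument. For the second and third kinds the paper simply cites \cite{RZ}, Lemma~6.41, whereas you sketch direct arguments.

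There is, however, a concrete error in your Step~1 for the second kind. You claim the slopes are $0,1$ when $h=0$, $\varepsilon=+1$. In fact, for uniformizing data of the second kind the group is always isoclinic of slope $1/2$, independently of $h$ and $\varepsilon$. Decompose $M$ according to embeddings of $K^t$ into $W(k)$; the Kottwitz condition forces $V:M_\psi\to M_{\sigma^{-1}\psi}$ to be an isomorphism when $\sigma^{-1}\psi\in\Phi^t$ and to have image $pM_{\sigma^{-1}\psi}$ otherwise. Iterating around the Frobenius orbit of length $2f_F$, one picks up exactly $f_F$ factors of $p$, giving slope $1/2$. The case split you wrote down looks like a confusion with the third kind.

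Your Step~3 is also underspecified at the crucial point. Knowing that $\lambda$ and $\alpha_0^*(\lambda')$ have the same invariant $\varepsilon$ tells you that the associated hermitian forms on $N$ have the same discriminant in $F^\times/{\rm Nm}(K^\times)$; but you then need an isometry that \emph{commutes with the Frobenius} $\uF$, not just a $K\otimes W(k)_\BQ$-linear isometry. The paper handles this by phrasing the equation $c\beta\gamma\gamma^*=1$ as a torsor under the $\BQ_p$-group of $\uF$-equivariant automorphisms preserving the form up to scalar, and invoking triviality of its first Galois cohomology set. Your appeal to the rank-one module $\bigwedge^2_K N$ does not by itself supply such a Frobenius-compatible $\gamma$; you should either make the descent argument explicit or invoke the same $H^1$-vanishing.
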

\begin{proof} It suffices to prove that $X$ and $X'$ are isoclinic and to show 
the existence of the isogeny $\alpha$ with $\alpha^* (\lambda) = c\lambda'$  with $c\in\BQ_p^\times$. That $c\in\BZ_p^\times$ then follows from the fact that both CM-triples are almost principal with identical $h$.

We consider the three cases separately.  

Suppose that $(X,\iota,\lambda)$ is of the first kind. Let 
$M= M(X)$ be its covariant Dieudonn\'e module.   Then $M$ is a free $W(k)$-module of rank $4$. Consider the slope decomposition of the corresponding rational Dieudonn\'e module $N=M_\BQ$. Each summand $N_\lambda$ is stable under the action of $K$ on $N$, hence has even dimension; we have $\lambda\geq 0$ if $N_\lambda\neq (0)$; furthermore, due to the polarization, $m_\lambda=m_{1-\lambda}$ for the multiplicities of the corresponding slope subspaces $N_\lambda$, resp. $N_{1-\lambda}$; finally, writing $\dim N_\lambda=m_\lambda d_\lambda$, where $d_\lambda$ is the dimension of the simple isocrystal of slope $\lambda$, we have $4=\sum_\lambda m_\lambda d_\lambda$. It follows that either $N=N_{\frac{1}{2}}$ and $m_{\frac{1}{2}}=2$, or
$N=N_0\oplus N_1$, where both summands $N_0$ and $N_1$ have dimension $2$ and $m_0=m_1=2$. The first case means that $X$ is isoclinic, the second that $X$ is ordinary. 

We need to exclude the ordinary case.  Write $W(k)_\BQ=\BQ\otimes W(k)$. The isocrystal comes with its natural polarization pairing
\begin{equation*}
\langle\, ,\, \rangle_0:N\times N\lra \, W(k)_\BQ,
\end{equation*} for which $N_0$ and $N_1$ are isotropic, and which corresponds to  a non-degenerate pairing
\begin{equation*}
[\, ,\, ]_0:N_0\times N_1\lra W(k)_\BQ.
\end{equation*} 
We choose bases $e_0, e_1$ for $N_0$, and $f_0, f_1$ for $N_1$ such that
\begin{equation}
\begin{aligned}
\underline{V}e_i=e_i, \underline{F}e_i=pe_i;\, \underline{V}f_i=pf_i, \underline{F}f_i=f_i \text{ for } i=0,1;\\ [e_0,f_0]_0=[e_1, f_1]_0=1,\,  [e_0, f_1]_0=[e_1, f_0]_0=0. 
\end{aligned}
\end{equation}
In this case, $\End\,(N)={\rm M}_2(\BQ_p)\times {\rm M}_2(\BQ_p)$. For $(b_0, b_1)\in {\rm M}_2(\BQ_p)\times {\rm M}_2(\BQ_p)$, we have 
\begin{equation*}
\langle (b_0, b_1)x, y\rangle_0=\langle x, (^tb_1, ^tb_0)y\rangle_0\, , \forall x,y\in N .
\end{equation*}
We may furthermore suppose that the action of $K$ on $N$ is given as follows. Let $K=\BQ_p(\sqrt\Delta)$. Putting $\delta=\sqrt\Delta$, 
the action of $K$ on $N_0\oplus N_1$ is given as
\begin{equation}
a+b\delta\mapsto \Big(\begin{pmatrix} a&b\\ \Delta b&a\end{pmatrix}, \begin{pmatrix} a&\Delta b\\b&a\end{pmatrix}\Big) .
\end{equation}
The given polarization on $X$ induces the pairing $\langle\, ,\, \rangle:N\times N\lra \, W(k)_\BQ$. Comparing the  involutions on $K$ induced by the Rosati involutions of the two polarizations, we see that  $\langle x , y \rangle=\langle\beta x ,y \rangle_0$, where $\beta\in \End(N)$ anticommutes with $K$.  Hence $\beta$ is of the form $\beta=(\beta_0, \beta_1)$ with 
\begin{equation}
\beta_0=\begin{pmatrix} a&b\\ -\Delta b&-a\end{pmatrix},\,\, \beta_1=\begin{pmatrix} a&-\Delta b\\  b&-a\end{pmatrix}.  
\end{equation}
Let us now calculate the local invariant according to the recipe of section \ref{section3}. The hermitian form associated to the alternating form 
$\langle\, ,\, \rangle$ is given as 
\begin{equation}\label{hermit.form}
(x, y)=\frac{1}{2}(\langle \delta x, y\rangle+\delta\langle x, y\rangle) =\frac{1}{2}(\langle \beta\delta x, y\rangle_0+\delta\langle \beta x, y\rangle_0 ). 
\end{equation} 
Taking $e_0, f_0$ as $K\otimes_{\BZ_p} W(k)$-basis of $N$, we obtain $x_0=e_0\wedge f_0$ as free generator of $\bigwedge^2N$, with $\underline{F}x_0=px_0$. Hence $\inv(X, \iota, \lambda)^\natural =-(x_0, x_0)\in \BQ_p^\times/{\rm Nm} (K^\times)$. Now
\begin{equation*}
(x_0, x_0)=\det \begin{pmatrix} (e_0, e_0)&(e_0, f_0)\\ (f_0, e_0)&(f_0, f_0)\end{pmatrix}=\det \begin{pmatrix} 0&(e_0, f_0)\\ (f_0, e_0)&0\end{pmatrix} .
\end{equation*}
It follows that $-(x_0, x_0)\in {\rm Nm}_{K/\BQ_p}(K^\times)$. 
This contradicts the imposed sign $\varepsilon=-1$ in the definition of uniformizing data of the first kind. 

Applying the same reasoning to $(X', \iota', \lambda')$, we obtain an isogeny $\alpha: X\lra X'$ compatible with the actions of $K$. However, $\lambda=\alpha^*(\lambda') \beta$ with $\beta\in \End(N)$ invariant under the Rosati involution of $\lambda$. By precomposing $\alpha$ with $\gamma\in \End_K(N)$,  we change $\beta$ into 
$\beta\gamma \gamma^*$. We consider the solutions of the equation $c\beta\gamma \gamma^*=1$,  with $c\in \BQ_p^\times$ and $\gamma\in \End_K(N)$,  as a torsor under the $\BQ_p$-group of automorphisms of $N$ which commute with $\underline{F}$ and preserve the polarization form up to a constant.  Since this group has trivial first cohomology set, we may solve this equation, and  may change $\alpha$ so that $\alpha^*(\lambda')=c\lambda$. This finishes the case of uniformizing data of the first kind.

 If $(K/F, r, h)$ is a uniformizing data of the  third kind, the assertion is proved in \cite{RZ}, Lemma 6.41. If $(K/F, r, h)$ is a uniformizing data of the  second kind,  then again the assertion is proved in \cite{RZ}, Lemma 6.41, provided that $h=0$. In this case, the sign factor $\ep$ equals $1$, since we are then calculating the discriminant of a hermitian vector space relative to an unramified quadratic extension which admits a self-dual lattice. An inspection of the proof of loc.~cit. shows that the proof also applies to the case when $(K/F, r, h)$ is a uniformizing data of the  second kind and $h=1$, in which case $\ep=-1$. 
\end{proof}
\begin{remark}\label{travdeZink}
 As mentioned at the end of the introduction, it seems plausible that the notion of  uniformizing data of the first kind can  be generalized to include cases where  $F$ is a non-trivial extension of $\BQ_p$.  More precisely, fix an embedding $\varphi_0:F\to \bar\BQ_p$. Then the conditions on $(K/F, r, h, \varepsilon)$ become: $K$ should be a field extension of $F$, $r_\varphi$ should be equal to $1$ if $\varphi|F=\varphi_0$, and should be equal to $0$ or $2$ otherwise. Finally, as before, $h$ should be $0$ or $1$  depending on whether $K/F$ is ramified or unramified,  and $\varepsilon$ should be $-1$. 

\end{remark}

\section{Integral uniformization}\label{Integral uniformization}
In this section,  we obtain  integral $p$-adic uniformization under a whole set of assumptions that we now explain. We
fix an embedding $\nu: \bar \BQ \lra \bar \BQ_p$. This embedding also determines a $p$-adic place $\nu$ of the reflex
field $E$. We decompose $ \Hom (K, \bar \BQ)$ into a disjoint sum according to the prime ideals $\bf p$ of $F$ over $p$, 
\begin{equation*}
 \Hom (K, \bar \BQ)_{\bf p} = \{\varphi\in \Hom (K, \bar \BQ)\mid \nu\circ\varphi_{|F} \text{ induces }{\bf p}\}. 
\end{equation*}
Then  $\Hom (K, \bar \BQ)_{\bf p}=\Hom_{\BQ_p}(K\otimes_F F_{\bf p}, \bar{\BQ}_p)$. 
Let $r_{\bf p} = r \vert \Hom (K, \bar \BQ)_{\bf p}$ and $\ep_{\bold p}=\inv_{\bold p}(V)$. 

We make the assumption that $(K_{\bf p}/F_{\bf p}, r_{\bf p}, h_{\bf p}, \ep_{\bold p})$ are uniformizing data of type $1,2,$ or $3$, for all ${\bf p} \vert p$.
We note that $E_\nu$ is the composite of the local reflex fields $E({r_{\bf p}})$ (with ${\bf p}$ running over the prime ideals
of $F$ over $p$). Let $\kappa_\nu$ be the residue field of $\CCO_{E_\nu}$, and denote by $\bar \kappa_\nu$ its algebraic closure.

The proof of the integral uniformization theorem will be analogous to the proof of Theorem 6.30 in \cite{RZ}. We will proceed according to the following steps. First we will show that all $p$-divisible groups $(X, \iota,\lambda)$ which arise from points $(A, \iota, \lambda)$ of $\CM_{h,r, V} (\bar \kappa_\nu)$ are isogenous to each other, and are moreover {\it basic} in the sense of \cite{K2}. Then we prove that, in fact, all points $(A, \iota, \lambda)$ are isogenous to each other. This already yields an abstract integral $p$-adic uniformization theorem, as in \cite{RZ}. In a third step, we make this abstract uniformization theorem explicit, by making use of  the alternative moduli description of the Drinfeld halfplane in \cite{KR}.

Let $(A_0, \iota_0, \lambda_0) \in \CM_{h,r, V} (\bar \kappa_\nu)$. Let $N$ be the isocrystal of $A_0$, with its
action by $K \otimes \BQ_p$ induced by $\iota_0$, and its anti-symmetric polarization form induced by $\lambda_0$.
\begin{lemma}\label{trivial}
There is an isomorphism of $K \otimes  W (\bar \kappa_\nu)$-modules
\begin{equation*}
 N \simeq V \otimes W(\bar \kappa_\nu)\ ,
\end{equation*}
which respects the anti-symmetric bilinear forms on both sides.
\end{lemma}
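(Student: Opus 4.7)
The plan is to decompose both sides according to the primes $\mathbf{p}$ of $F$ above $p$, and then reduce, $\mathbf{p}$ by $\mathbf{p}$, to a classification problem for hermitian forms over $L := W(\bar\kappa_\nu)_{\mathbb{Q}}$. First, using $F\otimes_{\mathbb{Q}}\mathbb{Q}_p = \prod_{\mathbf{p}\mid p} F_{\mathbf{p}}$, write $N = \bigoplus_{\mathbf{p}\mid p} N_{\mathbf{p}}$ and $V\otimes_{\mathbb{Q}} L = \bigoplus_{\mathbf{p}\mid p}\bigl(V\otimes_F F_{\mathbf{p}}\bigr)\otimes_{\mathbb{Q}_p} L$. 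Each $\mathbf{p}$-summand on either side is a free rank-$2$ module over $K_{\mathbf{p}}\otimes_{\mathbb{Q}_p} L$, and the alternating forms decompose orthogonally with respect to this splitting, so it suffices to produce an isomorphism of alternating $K_{\mathbf{p}}\otimes L$-modules for each $\mathbf{p}$ individually. Next, exactly as in Section~\ref{section3}~b)--c), a choice of $\delta_{\mathbf{p}}\in K_{\mathbf{p}}^\times$ with $\bar\delta_{\mathbf{p}} = -\delta_{\mathbf{p}}$ converts each alternating form into a $K_{\mathbf{p}}\otimes L$-hermitian form via $\langle x,y\rangle = \tr_{K_{\mathbf{p}}/F_{\mathbf{p}}}\bigl(\delta_{\mathbf{p}}^{-1}(x,y)\bigr)$, with isomorphisms of the alternating data corresponding bijectively to isometries of the hermitian data. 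It therefore suffices to show that any two rank-$2$ hermitian modules over $K_{\mathbf{p}}\otimes L$ are isometric.

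The main step will be this classification. Since $\bar\kappa_\nu$ is algebraically closed, $L$ has no non-trivial unramified extensions. Consequently, for each embedding $F_{\mathbf{p}}\hookrightarrow L$, the corresponding factor of $K_{\mathbf{p}}\otimes L$ is either a split étale algebra $L\oplus L$ (when $\mathbf{p}$ is split in $K/F$, or when $K_{\mathbf{p}}/F_{\mathbf{p}}$ is an unramified quadratic field extension, in which case $K_{\mathbf{p}}$ is already contained in $L$) or a ramified quadratic field extension $L'/L$ (when $K_{\mathbf{p}}/F_{\mathbf{p}}$ is ramified; then $p\neq 2$ by the standing hypothesis). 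In the split case a hermitian form is nothing but a perfect pairing between two $L$-modules and is determined by rank alone. In the ramified case, every unit of $L$ is a square by Hensel's lemma (here $p\neq 2$ enters) and is hence a norm from $L'$, while a uniformizer of $L$ is, up to sign and units, the norm of a uniformizer of $L'$; thus $\mathrm{Nm}_{L'/L}$ is surjective, the discriminant invariant in $L^\times/\mathrm{Nm}(L'^\times)$ vanishes, and rank-$2$ hermitian forms over $K_{\mathbf{p}}\otimes L$ are again unique up to isometry.

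Combining the $\mathbf{p}$-by-$\mathbf{p}$ isometries and translating back to the anti-symmetric forms will then produce the required global isomorphism $N\simeq V\otimes W(\bar\kappa_\nu)$. The main obstacle, as expected, is the ramified local classification just described, where the assumption $p\neq 2$ from the definitions of uniformizing data of the first and second kinds is essential; the global assembly is then formal from the orthogonal decomposition, and, notably, the hypothesis $(A_0,\iota_0,\lambda_0)\in\CM_{h,r,V}$ enters only through the rank normalization, since over $L$ the local invariants $\inv_{\mathbf{p}}$ are automatically trivial by the norm surjectivity argument above.
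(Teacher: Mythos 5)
Your proof is correct, and it is worth noting that it corresponds exactly to the route the paper gestures at but does not carry out: the closing sentence of the paper's own proof says ``Another way of obtaining this isomorphism is to note that for a complete discretely valued field with algebraically closed residue field, and a quadratic algebra over it, there is, up to isomorphism, exactly one hermitian space of given dimension.'' You have filled in precisely that observation, converting the alternating data to hermitian data via the $\delta_{\mathbf p}$-twist of Section~\ref{section3} and then classifying rank-$2$ hermitian modules over the split and ramified local factors by norm surjectivity. The paper's \emph{primary} argument is different and somewhat stronger: it produces a parahoric lattice $\Lambda\subset N_{\mathbf p}$ of type $h_{\mathbf p}$ and identifies it with the base change to $W(\bar\kappa_\nu)$ of the parahoric lattice of the same type in $V_{\mathbf p}$, citing \cite{RZ}, Theorem~3.16 (using that $W(\bar\kappa_\nu)$ has no nontrivial étale covers); the rational isomorphism you want then follows a fortiori. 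What the paper's argument buys is an integral statement (a lattice-level match), which is not needed for this lemma but is consonant with the integral uniformization that follows; what your argument buys is that it is elementary, self-contained, and makes clear that the hypothesis $(A_0,\iota_0,\lambda_0)\in\CM_{h,r,V}$ is not actually used, since all local invariants trivialize over $L$.

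One small imprecision that does not affect the conclusion: you phrase the local analysis as ranging over ``each embedding $F_{\mathbf p}\hookrightarrow L$,'' but when $F_{\mathbf p}/\mathbb{Q}_p$ is ramified there are no such embeddings; one should instead index over embeddings of the maximal unramified subextension $F^t_{\mathbf p}\hookrightarrow L$, and the corresponding factor of $F_{\mathbf p}\otimes_{\mathbb{Q}_p}L$ is a totally ramified extension $L'$ of $L$, still a complete DVR with algebraically closed residue field. The rest of your case analysis (split $\acute{e}$tale $L'\oplus L'$ when $K_{\mathbf p}/F_{\mathbf p}$ is split or unramified; ramified quadratic $L''/L'$ with surjective norm when $K_{\mathbf p}/F_{\mathbf p}$ is ramified, using $p\neq 2$) then goes through unchanged.
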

\begin{proof} We have an orthogonal decomposition with respect to the anti-symmetric form \eqref{altonV},
\begin{equation}\label{orthdec}
 V \otimes \BQ_p = \bigoplus\nolimits_{{\bf p} \mid p} V_{\bf p} \, .
\end{equation}
There is a similar decomposition of $N$, orthogonal for the polarization form. Now $N_{\bold p}$ contains a parahoric lattice of type $h_{\bold p}$, i.e., a lattice $\Lambda$ such that $\Lambda\subset\Lambda^\vee\subset\pi_{\bold p}^{-1}\Lambda$ where the dimension of $\Lambda^\vee/\Lambda$ over the residue field is equal to $h_{\bold p}$; this lattice is isomorphic to the extension of scalars of the parahoric lattice of type $h_{\bold p}$ in $V_{\bold p}$, cf \cite{RZ}, Theorem 3.16, comp. \cite{RZ}, 6.12 (we use the fact that $W(\bar \kappa_\nu)$ has no non-trivial \'etale coverings). Hence we get a fortiori the isomorphism of $K \otimes  W (\bar \kappa_\nu)$-modules, as claimed.

 Another way of obtaining this isomorphism  is to note that for a complete discretely valued field with algebraically closed residue field, and a quadratic algebra over it, there is, up to isomorphism, exactly one hermitian space of given dimension. \end{proof}

Using the isomorphism of Lemma \ref{trivial}, we can write the Frobenius operator on the left hand side as $b \otimes \sigma$,
for a uniquely defined element $b \in G(W(\bar \kappa_\nu)_\BQ)$. We have $c(b) = p$, where $c : G \lra \BG_m$ denotes the
multiplier morphism. Recall Kottwitz's set $B(G)$ of $\sigma$-conjugacy classes of elements in $G(W_\BQ(\bar\kappa_\nu))$, cf. \cite{K2}. 
\begin{lemma}\label{uniquenessin prod}
 The element $[b] \in B(G)$ is basic, and independent of $(A_0, \iota_0, \lambda_0)$.
\end{lemma}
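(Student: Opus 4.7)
The plan is to reduce to the local factors at each prime $\mathbf{p}\mid p$ and then invoke Proposition~\ref{uniformity}.

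First I would exploit the product decomposition. The decomposition $V\otimes\BQ_p=\bigoplus_{\mathbf{p}\mid p}V_{\mathbf{p}}$ is orthogonal for the form (\ref{altonV}), and gives $G_{\BQ_p}=\prod_{\mathbf{p}\mid p}G_{\mathbf{p}}$, where $G_{\mathbf{p}}=\mathrm{GU}(V_{\mathbf{p}})$ (with similitude factor in $\BQ_p^\times$ for the global similitude). Correspondingly $N=\bigoplus_{\mathbf{p}}N_{\mathbf{p}}$ as $K\otimes W(\bar\kappa_\nu)_\BQ$-module, and under the isomorphism of Lemma~\ref{trivial}, the Frobenius decomposes as $b=(b_{\mathbf{p}})_{\mathbf{p}\mid p}$ with $b_{\mathbf{p}}\in G_{\mathbf{p}}(W(\bar\kappa_\nu)_\BQ)$. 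Under the natural identification $B(G_{\BQ_p})=\prod_{\mathbf{p}}B(G_{\mathbf{p}})$, it therefore suffices to show that each class $[b_{\mathbf{p}}]\in B(G_{\mathbf{p}})$ is basic and does not depend on the choice of $(A_0,\iota_0,\lambda_0)$.

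Fix $\mathbf{p}\mid p$, and consider the local CM-triple $(X_{\mathbf{p}},\iota_{\mathbf{p}},\lambda_{\mathbf{p}})$ attached to the $\mathbf{p}$-part of the $p$-divisible group of $A_0$. By our hypothesis, $(K_{\mathbf{p}}/F_{\mathbf{p}},r_{\mathbf{p}},h_{\mathbf{p}},\varepsilon_{\mathbf{p}})$ are uniformizing data of type $1$, $2$, or $3$. Proposition~\ref{uniformity} then asserts, on the one hand, that $X_{\mathbf{p}}$ is isoclinic, and on the other hand, that for any other triple $(A_0',\iota_0',\lambda_0')\in\CM_{h,r,V}(\bar\kappa_\nu)$ there exists an $\CCO_{K_{\mathbf{p}}}$-linear isogeny $\alpha_{\mathbf{p}}\colon X_{\mathbf{p}}\to X'_{\mathbf{p}}$ with $\alpha_{\mathbf{p}}^{\ast}\lambda'_{\mathbf{p}}=c\lambda_{\mathbf{p}}$ for some $c\in\BZ_p^\times$. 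Passing to isocrystals, $\alpha_{\mathbf{p}}$ induces an isomorphism $N_{\mathbf{p}}\simeq N'_{\mathbf{p}}$ of $K_{\mathbf{p}}\otimes W(\bar\kappa_\nu)_\BQ$-modules preserving the polarization forms up to the scalar $c$, i.e.\ an element of $G_{\mathbf{p}}(W(\bar\kappa_\nu)_\BQ)$ conjugating $b_{\mathbf{p}}\sigma$ to $b'_{\mathbf{p}}\sigma$. Hence $[b_{\mathbf{p}}]=[b'_{\mathbf{p}}]$ in $B(G_{\mathbf{p}})$, which gives the desired independence.

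For basicity, the fact that $X_{\mathbf{p}}$ is isoclinic means that the Newton slope homomorphism $\nu_{b_{\mathbf{p}}}\colon\mathbb{D}\to G_{\mathbf{p}}$, viewed inside $\mathrm{GL}(N_{\mathbf{p}})$, is a scalar endomorphism and hence lies in the center of $\mathrm{GL}(N_{\mathbf{p}})$. Since the center of $G_{\mathbf{p}}$ is the intersection of $G_{\mathbf{p}}$ with the scalars of $\mathrm{GL}_{K_{\mathbf{p}}\otimes\BQ_p}(V_{\mathbf{p}})$, the slope homomorphism factors through $Z(G_{\mathbf{p}})$, which is exactly the basicity condition. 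The main obstacle is thus not conceptual but a matter of carefully matching the isogeny description of Proposition~\ref{uniformity} with the group-theoretic set $B(G_{\mathbf{p}})$; this is essentially the content of the standard dictionary between isocrystals with additional structure and elements of $B(G)$, and in our unitary situation it follows from the same type of argument used in Lemma~\ref{trivial}.
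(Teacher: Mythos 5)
Your proof has a genuine gap. You assert that the orthogonal decomposition $V\otimes\BQ_p=\bigoplus_{\mathbf{p}\mid p}V_{\mathbf{p}}$ gives an equality $G_{\BQ_p}=\prod_{\mathbf{p}\mid p}G_{\mathbf{p}}$ and hence an identification $B(G_{\BQ_p})=\prod_{\mathbf{p}}B(G_{\mathbf{p}})$. This is false: $G$ was defined with similitude factor in $\BQ^\times$, not in $F^\times$, so $G_{\BQ_p}$ is only the \emph{subgroup} of $\prod_{\mathbf{p}}G'_{\mathbf{p}}$ where the local scale factors all coincide in $\BQ_p^\times$ (this is made explicit in Proposition~\ref{puttog}). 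Consequently the map $B(G)\to\prod_{\mathbf{p}}B(G'_{\mathbf{p}})$ is not an isomorphism, and neither the independence nor the basicity of $[b]$ can simply be read off componentwise.

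This is precisely the point the paper's proof handles, and it requires two nontrivial observations that you skip. First, to descend basicity from the product to $G$, one uses that the center of $G$ is the intersection of the center of $\prod_{\mathbf{p}}G'_{\mathbf{p}}$ with $G$, so an element of $B(G)$ is basic if and only if its image in $B(\prod_{\mathbf{p}}G'_{\mathbf{p}})$ is. Second, to descend the independence statement, one must know that $B(G)\to B(\prod_{\mathbf{p}}G'_{\mathbf{p}})$ is \emph{injective}; the paper proves this via the long exact cohomology sequence associated to the short exact sequence $1\to G_{\BQ_p}\to\prod_{\mathbf{p}}G'_{\mathbf{p}}\to\big(\prod_{\mathbf{p}}\BQ_p^\times\big)/\BQ_p^\times\to 1$, using that $\prod_{\mathbf{p}}G'_{\mathbf{p}}(\BQ_p)\to\big(\prod_{\mathbf{p}}\BQ_p^\times\big)/\BQ_p^\times$ is surjective. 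Once those two points are in place, invoking Proposition~\ref{uniformity} at each $\mathbf{p}$ does finish the argument as you intended; but without them the reduction to the local factors is not justified.
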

\begin{proof} Corresponding to \eqref{orthdec} there is an embedding of algebraic groups over $\BQ_p$, 
\begin{equation}\label{embed}
 G_{\BQ_p} \lra \prod\nolimits_{\bf p} G'_{{\bf p}}  \, ,
\end{equation}
where the product runs over all primes of $F$ over $p$, and where $G'_{\bf p}$ denotes the group of unitary similitudes of the hermitian space $V_{\bf p}$ 
(with similitude factor in $\BQ_p$).  Since the center of $G$ is equal to the intersection of the center of  $\prod\nolimits_{\bf p} G'_{{\bf p}}$ with $G$, an element in $B(G)$ is basic if its image under the map $B(G)\to B(\prod\nolimits_{\bf p} G'_{{\bf p}})$ is. Furthermore,  using the long exact cohomology sequence associated to the injection \eqref{embed}, this last map is injective, since the map $ \prod\nolimits_{\bf p} G'_{{\bf p}} (\BQ_p)\to (\prod\nolimits_{\bf p}\BQ_p^\times)/\BQ_p^\times$ is surjective. Hence Proposition \ref{uniformity} implies that $[b]$ is basic and independent of $(A_0, \iota_0, \lambda_0)$. 
\end{proof}
\begin{remark}\label{compkottw} In the discussion above, we have adopted the point of view of Kottwitz, that is, we  view $N$ with its additional structure as given by an element in $\tilde G(W(\bar \kappa_\nu)_\BQ)$, where $\tilde G$ is a suitable reductive algebraic group over $\BQ_p$. In fact, we have taken  $\tilde G$ to be the localization at $p$ of our group $G$ over $\BQ$. We follow here the method of Kottwitz for convenience only and to make our exposition more efficient, because we then can  quote \cite{RZ} . However, it should be pointed out that this point of view is not very natural in the framework of the present paper, and could be avoided.  This can be done without any additional work  if there is only one prime ${\bf p}$ over $p$.

A more sophisticated alternative proof of  Lemma \ref{uniquenessin prod} uses  the finite subset $B(G, \mu)$ of $B(G)$, for the conjugacy class of cocharacters $\mu$ associated to the conjugacy class of \eqref{defofh}. Then $[b]\in B(G, \mu)$, by Mazur's inequality, cf. \cite{RR}. 
Using the bijections $B(G, \mu)\simeq B(G_{\rm ad}, \mu_{\rm ad})\simeq \prod\nolimits_{{\bf p}}B((G'_{\bold p})_{\rm ad}, \mu'_{{\bf p}, {\rm ad}})$, cf.~\cite{K2}, 6.5., we  obtain a bijection
\begin{equation*}
B(G, \mu)\simeq\prod\nolimits_{{\bf p}}B(G'_{{\bf p}}, \mu'_{{{\bf p}}}) ,
\end{equation*}
where $ \mu'_{{{\bf p}}}$ denotes the minuscule coweight of $G'_{\bold p}$ obtained from $\mu$ via  \eqref{embed}. However, 
Proposition \ref{uniformity} implies that,  for every $\bold p$, the image of $[b]$ in $B(G'_{{\bf p}}, \mu'_{{{\bf p}}})$ is the unique basic element in this set, which implies the assertion of Lemma \ref{uniquenessin prod}.

Let $\bold p$ be of the first type. Then $(G'_{\bold p})_{\rm ad}$ is isomorphic to $(D^\times)_{\rm ad}$, where $D^\times$ denotes the algebraic group over $\BQ_p$ associated to the quaternion division algebra over $\BQ_p$. Furthermore,  the coweight $\mu'_{{\bf p}, {\rm ad}}$ of $(G'_{\bold p})_{\rm ad}$ is given by the unique nontrivial minuscule coweight. It then follows that $B(G'_{{\bf p}}, \mu'_{{{\bf p}}})$ consists only of the unique basic element in this set, cf.~\cite{K2}, \S 6. This gives a proof of Lemma \ref{uniformity} in the style of Kottwitz's view on isocrystals with additional structure and avoids the use of Proposition \ref{uniformity}, cf. the remarks at the end of the Introduction.  The  primes of the second and the third type can also be viewed from this perspective, since for them $\mu'_{{\bf p}, {\rm ad}}$ is central. 
\end{remark}
At this point we want to apply \cite{RZ}, Theorem 6.30. In the notation of that theorem, we want to show that 
$Z = Z'$. Let $I$ be the linear algebraic group over $\BQ$ of loc. cit., i.e.
\begin{equation*}
 I (\BQ) = \{ \alpha \in \End_K (A_0)^{0,\times} \mid \alpha^*(\lambda_0) = c \lambda_0,\, c \in \BQ^\times \}
\end{equation*}
By loc. cit., $I$ is an inner form of $G$.
\begin{lemma}
 The Hasse principle for $I$ is satisfied.
\end{lemma}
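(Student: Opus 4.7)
The plan is to reduce the Hasse principle for $I$ to that for the unitary group $U(\bar V)$, using the exact sequence of algebraic groups over $\BQ$
$$1 \to U(\bar V) \to I \xrightarrow{c} \BG_m \to 1,$$
where $\bar V$ is the $2$-dimensional hermitian space over $K$ realizing $I \simeq GU(\bar V)$ with similitude in $\BQ^\times$, and $c$ denotes the similitude character. Galois cohomology and Hilbert $90$ give, for $k = \BQ$ and for each completion $k = \BQ_v$, exact sequences
$$I(k) \xrightarrow{c} k^\times \xrightarrow{\delta_k} H^1(k, U(\bar V)) \to H^1(k, I) \to 1.$$

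First, I would take $\xi \in \ker[H^1(\BQ, I) \to \prod_v H^1(\BQ_v, I)]$ and lift it to a class $\eta \in H^1(\BQ, U(\bar V))$. Local triviality of $\xi$ forces $\eta_v = \delta_v(b_v)$ for some $b_v \in \BQ_v^\times$ at each $v$. I would then invoke the Hasse principle for hermitian forms over the CM extension $K/F$ (Landherr) to conclude that $H^1(\BQ, U(\bar V)) \hookrightarrow \prod_v H^1(\BQ_v, U(\bar V))$; this reduces the problem to finding $b \in \BQ^\times$ with $\delta_v(b) = \eta_v$ at each place, equivalently with $b/b_v \in c(I(\BQ_v))$ for every $v$.

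The final step is a local analysis of the image of $c$. At each finite place $v$, a direct calculation with the similitudes of a $2$-dimensional hermitian space over $K_v/F_v$ shows that $c : I(\BQ_v) \to \BQ_v^\times$ is surjective, so the finite-place conditions are vacuous. At the archimedean place, the assumption that $I_{\rm ad}(\BR)$ is compact forces $\bar V$ to be definite at every real place of $F$, whence $c(I(\BR)) = \BR^\times_{>0}$; the only remaining constraint is then a sign condition on $b$, met by taking $b = \pm 1 \in \BQ^\times$. I expect the main technical point to be precisely this local analysis of the image of $c$ together with the bookkeeping in the diagram chase; once those are in hand, matching local invariants via Landherr finishes the proof, paralleling the Hasse principle verifications for similar similitude groups carried out in \cite{RZ}.
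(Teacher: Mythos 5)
Your argument is correct, but it is a genuinely different route from the one taken in the paper. The paper's proof is a one-line citation to Kottwitz's computation in \cite{K}, \S 7, where the vanishing of $\ker^1(\BQ,I)$ for unitary similitude groups of ``case $A$'' with $n$ even is deduced via the Tate--Nakayama formalism: the obstruction group is expressed in terms of the center of the dual group, and for $n$ even in case $A$ it is shown to vanish. You instead argue directly from the similitude exact sequence $1 \to \mathrm{Res}_{F/\BQ}U(\bar V) \to I \xrightarrow{c} \BG_m \to 1$: Hilbert 90 gives the boundary maps $\delta$, the Hasse principle for hermitian forms (Landherr) supplies the injectivity of localization on $H^1$ of the unitary group, and the rest is the local computation of $\mathrm{Im}(c)$ --- surjectivity at finite places (crucially using that $n=2$ is even, so scaling a hermitian form by $\lambda$ changes the discriminant by $\lambda^n\in\mathrm{Nm}(K_v^\times)$), and $c(I(\BR))=\BR_{>0}$ at infinity (from compactness of $I_{\mathrm{ad}}(\BR)$, so $\bar V$ is definite), leaving only a sign to match with $b=\pm1$. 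The diagram chase (lift $\xi$ to $\eta$, write $\eta_v=\delta_v(b_v)$, find a global $b$ in all the cosets $b_v\,c(I(\BQ_v))$, then invoke Landherr to conclude $\delta_\BQ(b)=\eta$) is sound. Your approach is more elementary and self-contained, making the role of $n$ even and of the definiteness at $\infty$ transparent; Kottwitz's approach, which the paper invokes, is more uniform and immediately covers all even $n$ and the twisted (division-algebra) variants without separate bookkeeping.
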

\begin{proof}
 This follows from \cite{K}, $\S 7$. Indeed, we are here in the case $A$, for $n=2$, and it is proved in
loc. cit that the Hasse principle is satisfied in the case $A$,  for any even  $n$. 
\end{proof}
We may now apply \cite{RZ}, Theorem 6.30, and obtain an isomorphism
\begin{equation}\label{genunif}
 \Theta : I (\BQ)\bs \CM \times G(\BA^{\infty,p}) / C^p \simeq \CM_{r,h, V} (C^p)^\wedge\, \, \, ,
\end{equation}
with notation as follows. On the RHS appears the formal completion of $\CM_{r,h, V}(C^p)$ along its special fiber 
$\CM_{r,h, V}(C^p) \otimes_{\CCO_{{E_{(p)}}} }\kappa_\nu$. On the LHS, $\CM$ denotes the formal moduli space $\breve{\CM}$
over $\CCO_{\breve{E_\nu}}$ with its Weil descent datum to $\CCO_{E_\nu}$ associated in \cite{RZ} to the data 
$(F \otimes \BQ_p, K \otimes \BQ_p, V \otimes \BQ_p, b, r, \CL)$, attached to the situation at hand.

Let us describe more concretely the formal scheme $\breve{\CM}$, with its Weil descent datum. We enumerate the
prime ideals of $F$ over $p$ as follows:
\begin{equation*}
\begin{aligned}
{\bf p}_1, & \ldots , {\bf p}_r && \, \text{are uniformizing primes of the first kind}\\
{\bf p}_{r+1}, & \ldots , {\bf p}_{r+s} && \, \text{are uniformizing primes of the second kind}\\
{\bf p}_{r+s+1}, & \ldots , {\bf p}_{r+s+t} && \, \text{are uniformizing primes of the third kind .}
\end{aligned}
\end{equation*}
The data $b, \mu$ decompose naturally under the embedding \eqref{embed} as a product $b = \prod_i b_i$ and $ \mu = \prod_i \mu_i$; here $\mu_i$
corresponds to the local generalized CM-type $r_{{\bf p}_i}$.

Let $E_i$ denote the local Shimura field associated to $r_{{\bf p}_i}$. Let $\breve{\CM}_i$ be the formal scheme over
${\rm Spf}\, \CCO_{\breve{E}_i}$ whose values in a scheme $S \in {\rm Nilp}_{\CCO_{\breve{E}_i}}$ are given by the set of isomorphism
classes of quadruples $(X, \iota, \lambda, \varrho),$ where $X$ is a $p$-divisible group over $S$ with an
action $\iota$ of $\CCO_{K_{{\bf p}_i}}$ of CM-type $r_{{\bf p}_i}$ and a polarization $\lambda$ with associated Rosati
involution inducing the non-trivial automorphism of $K_{{\bf p}_i}$ over $F_{{\bf p}_i}$, and where
\begin{equation*}
 \varrho : X \times_S \bar S \lra \BX \times_{{\rm Spec} \, \bar{\kappa}_{{\bf p}_i}} \bar S
\end{equation*}
is a $\CCO_{K_{{\bf p}_i}}$-linear  quasi-isogeny such that $\lambda$ and $\varrho^* (\lambda_\BX)$ differ locally on
$\bar S$ by a scalar in $\BQ_p^\times$.  Here $\bar S$ denotes the special fiber of $S$. It is also assumed that the  kernel of $\lambda$ is trivial if ${\bf p}_i$ is split, 
and is of height $f_{{\bf q}_i}h_{{\bf q}_i}$ and contained in
$X[\iota({\bf q}_i)]$ if ${\bf p}_i$ has a unique prime ideal ${\bf q}_i$ over it. 
The height of $\varrho$ is a locally constant function on
$S$ of the form
\begin{equation*}
 s \mapsto f_{{\bf q}_i} \cdot \breve{c}_i(s) \, ,
\end{equation*}
where $\breve{c}_i : S \lra \BZ$, cf.  \cite{RZ}, Lemma 3.53.

Then it is easily seen that $\breve{\CM}$ is the formal subscheme of
\begin{equation*}
 (\breve{\CM}_1 \times_{{\rm Spf}\, \CCO_{\breve{E}_1}} {\rm Spf}\, \CCO_{\breve{E}_\nu})\times_{{\rm Spf}\, \CCO_{\breve{E}_\nu}} \ldots
\times_{{\rm Spf}\, \CCO_{\breve{E}_\nu}} (\breve{\CM}_{r+s+t} \times_{{\rm Spf}\, \CCO_{\breve{E}_{r+s+t}}}{\rm Spf}\, \CCO_{\breve{E}_\nu})
\end{equation*}
where the functions $\breve{c}_i$ agree.

We now describe the formal schemes $\CM_i$ in more detail. For $i$ with $r+1 \leq i \leq r+s + t$, this has been done in 
\cite{RZ}, 6.46 and 6.48. We record this result as follows.

For $i=r+1, \, \ldots , r+s$, the formal scheme $\breve{\CM}_i$ is the constant \'{e}tale scheme 
$G'_{{\bf p}_i}(\BQ_p)/C_{{\bf p}_i}$ for a certain maximal compact subgroup $C_{{\bf p}_i}$. In these cases $E_i = \BQ_p$,
and the Weil descent datum on $\breve{\CM}_i \simeq G'_{{\bf p}_i}(\BQ_p)/C_{{\bf p}_i}$ is given by multiplication
by a central element $t_i \in G'_{{\bf p}_i}(\BQ_p)$ (equal to $(1,p)$ in the notation cf. \cite{RZ}, Lemma 6.47). We note that in loc.~cit. only the case $h=0$ is considered; however, the result is also valid in the case $h=1$, with the same proof. 

For $i=r+s+1, \, \ldots , r+s+t$, the formal scheme $\breve{\CM}_i$ is again the constant \'{e}tale scheme
$G'_{{\bf p}_i}(\BQ_p)/C_{{\bf p}_i}$ for a certain maximal compact subgroup $C_{{\bf p}_i}$. In these cases, $E_i$ is the 
unramified extension of degree $u$ of $\BQ_p$, where $u$ is an \emph{even} divisor of $2 \cdot f_{{\bf p}_i}$. The Weil 
descent datum on $\breve{\CM}_i \simeq G'_{{\bf p}_i}(\BQ_p)/C_{{\bf p}_i}$ is given by multiplication by a central
element $t_i \in G'_{{\bf p}_i}(\BQ_p)$ (equal to $p^{u/2}$ in the notation of \cite{RZ}, 6.48.)

Now let us consider the uniformizing primes of the first kind, so $1\leq i \leq r$. Then $G'_{{\bf p}_i}$ is the group of unitary similitudes for the hermitian vector space $V_{{\bf p}_i}$
of dimension $2$ over the quadratic extension $K_{{\bf p}_i}$ of $\BQ_p$, and $E_i = \BQ_p$. The height of the 
quasi-isogeny $\varrho$ in the quadruple $(X, \iota, \lambda, \varrho)$ defines the function
\begin{equation*}
 \breve{c}_i : \breve{\CM}_i \lra \BZ \, .
\end{equation*}
It is easily seen that ${\rm Im}(\breve{c}_i) = 2 \BZ$.

Let $\CM_0$ be the Drinfeld moduli scheme. We recall its definition. Let $B$ be the quaternion division algebra
over $\BQ_p$, and denote by $\CCO_B$ its maximal order. Then $\CM_0$ is the formal scheme $\breve{\CM_0}$ over 
${\rm Spf} \, \breve{\BZ}_p$ with its Weil descent datum to ${\rm Spf} \, \BZ_p$ (cf.  \cite{RZ}, Theorem 3.72), where
the value of $\breve{\CM}_0$ on a scheme $S \in {\rm Nilp} \, _{\breve{\BZ}_p}$ is the set of isomorphism classes of
triples $(X, \iota_B, \varrho)$, where $(X, \iota_B)$ is a special formal $\CCO_B$-module on $S$ and where
$\varrho : X \times_S \bar{S} \rightarrow \BX \times_{{\rm Spf} \, \bar{\BF}_p}\bar{S}$ is a $\CCO_B$-linear quasi-isogeny.
Here $(\BX, \iota_B)$ is a fixed \emph{framing object} over $\bar{\BF}_p$. Again the height of $\varrho$ defines
a function
\begin{equation*}
 \breve{c}_0 : \breve{\CM}_0 \lra \BZ \, ,
\end{equation*}
with ${\rm Im} (\breve{c}_0) = 2\BZ$. For $k \in \BZ$, let
\begin{equation}
 \breve{\CM}_i [k] = \breve{c}^{-1}_i (2k) \, , \quad i = 0,1, \, \ldots, r \, .
\end{equation}
Then $ \breve{\CM}_0 [0] \simeq \breve{\Omega}^2_{\BQ_p}$, where $\breve{\Omega}^2_{\BQ_p} = \wh{\Omega}^2_{\BQ_p} \times_{{\rm Spf} \, \BZ_p} {\rm Spf} \, \breve{\BZ}_p$ 
({\it Drinfeld's isomorphism}).
Furthermore
\begin{equation*}
 \breve{\CM}_0 \simeq \breve{\CM}_0 [0] \times \BZ \, ,
\end{equation*}
via the isomorphisms $\breve{\CM}_0 [0] \rightarrow \breve{\CM}_0 [k]$ for any $k$, given by 
$$(X, \iota_B,  \varrho) \longmapsto (X, \iota_B\circ{\rm int}(\Pi)^{-k},\iota_B(\Pi)^{k} \circ \varrho)$$ in the notation of loc. cit.
Under the isomorphism
$ \breve{\CM}_0 \simeq \breve{\Omega}^2_{\BQ_p} \times \BZ$, 
the Weil descent datum on the LHS is given on the RHS by the composite of the natural descent datum on 
$\breve{\Omega}^2_{\BQ_p}$ and translation by $1$, cf. \cite{RZ}, Theorem 3.72.

For $1\leq i\leq r$, we embed $K_{{\bf p}_i}$ into $B$. More precisely,

\smallskip

a) when $K_{{\bf p}_i} / \BQ_p$ is unramified, we write $\CCO_{K_{{\bf p}_i}} = \BZ_p[\delta]$, where 
$\delta^2 \in \BZ^\times_p$ and we choose the uniformizer $\Pi$ of $\CCO_B$ such that $\Pi$ normalizes 
$K_{{\bf p}_i}$ and satisfies $\Pi^2 = p$.

\smallskip

b) when $K_{{\bf p}_i} / \BQ_p$ is ramified, we assume $p\neq 2$. Then we choose for $\Pi$ a uniformizer of
$\CCO_{K_{{\bf p}_i}}$ such that $\Pi^2 \in \BZ_p$.

\smallskip

In \cite{KR} we define for any $i$ with $1\leq i\leq r$ an isomorphism
\begin{equation}\label{newint}
 \breve{\CM}_0 [0] \lra \breve{\CM}_i[0] \, .
\end{equation}
More precisely, we associate to the framing object $(\BX, \iota_\BX)$ of $\CM_0$ a framing object 
$(\BX,\iota, \lambda_\BX)$ of $\CM_i$, where $\iota$ is  the restriction from $\CCO_B$ to $\CCO_{K_{{\bf p}_i}}$ of $\iota_\BX$,
and where $\lambda_\BX$ is a carefully chosen quasi-polarization of 
$\BX$. Similarly,  we associate  to any point $(X, \iota_B, \varrho)$ of $ \breve{\CM}_0[0]$ the point $(X, \iota, \lambda_X, \varrho)$ of $\breve{\CM}_i[0]$,
where $\iota = \iota_B \vert\CCO_{K_{{\bf p}_i}}$ and where $\lambda_X$ is a carefully chosen quasi-polarization
of $X$.
\begin{lemma}\label{comp.drinfeld}
 The isomorphism \eqref{newint} extends to an isomorphism compatible with the Weil descent data,
\begin{equation}\label{extofKR}
 \breve{\CM}_0 \lra \breve{\CM}_i \, .
\end{equation}
\end{lemma}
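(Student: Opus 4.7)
The plan is to extend the map \eqref{newint} by combining it with the $\BZ$-shift structure that is already explicit on the Drinfeld side, and then to verify descent compatibility componentwise. First I would establish that $\breve{\CM}_i$ admits a product decomposition analogous to $\breve{\CM}_0 \simeq \breve{\CM}_0[0] \times \BZ$. Using the chosen element $\Pi$ (with $\Pi^2 = p$ in case (a), resp.\ $\Pi$ a uniformizer of $\CCO_{K_{{\bf p}_i}}$ with $\Pi^2 \in \BZ_p$ in case (b)), the assignment
\[
(X, \iota, \lambda_X, \varrho) \longmapsto \bigl(X, \iota \circ \mathrm{int}(\Pi)^{-k},\, \lambda_X,\, \iota(\Pi)^{k} \circ \varrho\bigr)
\]
defines an isomorphism $\breve{\CM}_i[0] \to \breve{\CM}_i[k]$ for each $k$; the Kottwitz condition is preserved because $\mathrm{int}(\Pi)$ acts on $K_{{\bf p}_i}$ as the nontrivial Galois automorphism, which exchanges $r_\varphi$ with $r_{\bar\varphi}$, and these are equal in the uniformizing data of the first kind. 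Thus $\breve{\CM}_i \simeq \breve{\CM}_i[0] \times \BZ$ via the height function $\breve c_i$.

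Next I would check that the isomorphism \eqref{newint} intertwines the two $\BZ$-actions. Since the construction in \cite{KR} produces $\lambda_X$ from $\iota_B$ (and, on the framing side, from $\iota_\BX$) in a way that only uses the restriction to $\CCO_{K_{{\bf p}_i}}$ together with the structure of $\CCO_B$, and since $\Pi$ lies in $\CCO_B$, the outputs on $(X, \iota_B \circ \mathrm{int}(\Pi)^{-k}, \iota_B(\Pi)^k \circ \varrho)$ and on $\bigl(X, (\iota_B|_{\CCO_{K_{{\bf p}_i}}}) \circ \mathrm{int}(\Pi)^{-k}, \lambda_X, \iota_B(\Pi)^k \circ \varrho\bigr)$ agree. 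Combining this compatibility with the $\BZ$-action and the isomorphism on the $[0]$-components yields the desired extension $\breve{\CM}_0 \to \breve{\CM}_i$.

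Finally I would verify compatibility with the Weil descent data. By \cite{RZ}, Theorem 3.72, the descent datum on $\breve{\CM}_0 \simeq \breve\Omega^2_{\BQ_p} \times \BZ$ is the composite of the natural descent datum on $\breve\Omega^2_{\BQ_p}$ with translation by $1$ on $\BZ$. On $\breve{\CM}_i$, the descent datum is induced from the Frobenius twist of the framing object $(\BX, \iota, \lambda_\BX)$, which acts on both the $[0]$-component and, through $\breve c_i$, by translation by $1$ on the $\BZ$-factor (since $E_i = \BQ_p$ here, the descent is over $\BZ_p$ itself and the Frobenius twist shifts the quasi-isogeny height by $2$). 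Thus it suffices to match the descent data on $[0]$-components. This in turn reduces to the Galois-equivariance of the \cite{KR}-construction of $\lambda_\BX$: since $\lambda_\BX$ is defined intrinsically from $\iota_\BX$ on $\BX$ over $\bar\BF_p$, Frobenius conjugation of $\iota_\BX$ transports $\lambda_\BX$ to itself, so the natural descent datum on $\breve{\CM}_0[0]$ corresponds via \eqref{newint} to that on $\breve{\CM}_i[0]$.

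The main obstacle I expect is the bookkeeping verification that the choice of $\lambda_X$ in \cite{KR} is genuinely equivariant under $\mathrm{int}(\Pi)$-twisting and under the Frobenius twist on the framing object; this must be extracted from the explicit construction in \cite{KR} rather than formally, and is where any sign or normalization issue (especially in the ramified case (b) where $p \ne 2$) would surface.
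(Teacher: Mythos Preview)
Your overall strategy—extend \eqref{newint} using the $\BZ$-shift structure and then check descent—matches the paper's. But the implementation has a real gap, and what you flag as ``bookkeeping'' is in fact the entire content of the proof.

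First, your shift on $\breve{\CM}_i$ is not well-defined. On $\breve{\CM}_i$ the action $\iota$ is only by $\CCO_{K_{{\bf p}_i}}$, not by $\CCO_B$. In the unramified case (a), the element $\Pi$ is a uniformizer of $\CCO_B$ and does \emph{not} lie in $K_{{\bf p}_i}$, so your ``$\iota(\Pi)$'' has no meaning for a point of $\breve{\CM}_i$. The shift can only be implemented through the $\CCO_B$-action $\iota_\BX$ on the \emph{framing object}, which is what the paper does. (Separately, in the ramified case (b) your remark that ${\rm int}(\Pi)$ acts on $K_{{\bf p}_i}$ as the Galois automorphism is false: there $\Pi\in K_{{\bf p}_i}$, so ${\rm int}(\Pi)$ is trivial on $K_{{\bf p}_i}$.)

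Second, and more importantly, the paper does not merely shift by $\iota_\BX(\Pi)^k$ in both cases. It shifts by $\iota_\BX(\Pi)^k$ in the ramified case and by $\iota_\BX(\Pi\delta)^k$ in the unramified case. The reason is the polarization condition in the definition of $\breve{\CM}_i$: one needs $\varrho^*(\lambda_\BX)$ to differ from $\lambda_X$ by a scalar in $\BQ_p^\times$, so the shift must satisfy $(\text{shift})^*(\lambda_\BX)\in\BQ_p^\times\cdot\lambda_\BX$. The paper's proof consists precisely of the two identities
\[
\iota_\BX(\Pi)^*(\lambda_\BX)=-p\cdot\lambda_\BX\quad\text{(ramified)},\qquad
\iota_\BX(\Pi\delta)^*(\lambda_\BX)={\rm Nm}(\delta)\cdot p\cdot\lambda_\BX\quad\text{(unramified)},
\]
which come from the explicit Rosati involution on $B$ determined by $\lambda_\BX$ in \cite{KR}. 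This is not bookkeeping that can be deferred; it is the substance of the lemma, and it is what forces the choice $\Pi\delta$ rather than $\Pi$ in case (a). Your proposal does not identify this distinction.

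Finally, your descent argument is not quite right: the Weil descent datum shifts $\breve{\CM}_0[k]$ to $\breve{\CM}_0[k+1]$, so it does not preserve the $[0]$-component, and one cannot reduce to ``matching on $[0]$-components.'' Once the extension is written correctly via the explicit shifts above, descent compatibility follows directly from the definitions—as the paper says in its last sentence.
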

\begin{proof}
 Let $k \in 2\BZ$ and let $(X, \iota_B\circ{\rm int}(\Pi^{-k}), \iota_\BX(\Pi)^{k} \circ \varrho)$ be a point of $\breve{\CM}_0[k]$, where
$(X, \iota_B, \varrho) \in \breve{\CM}_0[0]$. If $K_{{\bf p}_i} / \BQ_p$ is ramified, we map this point 
to $(X, \iota_X, \lambda_X, \iota_\BX(\Pi)^{k} \circ 
\varrho) \in \breve{\CM}_i[k]$. This makes sense since in this case 
\begin{equation*}
 \iota_\BX(\Pi)^{*} (\lambda_\BX) = -p \cdot \lambda_\BX \, .
\end{equation*}
If $K_{{\bf p}_i} / \BQ_p$ is unramified, we map this point to $(X, \iota_X, \lambda_X, \iota_\BX(\Pi\delta)^{k}
\circ \varrho) \in \breve{\CM}_i[k]$. This makes sense since in this case 
\begin{equation*}
 \iota_\BX(\Pi\delta)^{*} (\lambda_\BX) = {\rm Nm} \, (\delta) \cdot p \cdot \lambda_\BX \, .
\end{equation*}
It follows from the definitions that this defines an isomorphism compatible with the descent
data on $\breve{\CM}_0$ and $\breve{\CM}_i$. 
\end{proof}
\begin{corollary} 
\label {isowithdegree}
Let $i$ with $1\leq i\leq r$. Assume $p\neq 2$ if ${\bf p}_i$ is ramified in $K$. 
 There is an isomorphism
\begin{equation*}\label{compiso}
 \breve{\CM}_i \simeq \breve{\Omega}^2_{\BQ_p} \times \BZ \, ,
\end{equation*}
such that the Weil descent datum on the LHS corresponds to the composite of the natural descent datum on 
$\breve{\Omega}^2_{\BQ_p}$ and translation by $1$.\qed
\end{corollary}
We leave it to the reader to check that (again for $i$ with $1\leq i\leq r$) the group $J_i(\BQ_p)$ of $\CCO_{K_{{\bf p}_i}}$-linear self-isogenies
of $\BX$ which preserve $\lambda_\BX$ up to a scalar in $\BQ^\times_p$ can be identified with the group
of unitary similitudes of the \emph{split} hermitian space of dimension $2$ over $K_{{\bf p}_i}$, and that the
action of an element $g \in J_i(\BQ_p)$ on the LHS of Corollary \ref{isowithdegree}, given by $(X, \iota, \lambda,\rho)\mapsto (X, \iota, \lambda,g\circ\rho)$, is given on the RHS   by 
\begin{equation*}
 (g_{\rm ad}  \, , \text{translation by } {\scriptstyle\frac12}\,\text{\rm ord}\, c(g))\, ,
\end{equation*}
where  $g_{\rm ad} $ is considered as an element in ${\rm PGL}_2(\BQ_p)$, via a chosen isomorphism $(J_{i})_{\rm ad}(\BQ_p) = {\rm PGL}_2(\BQ_p)$. 

We refer to \cite{RZ}, 6.46, resp. 6.48 for a description of the analogous groups $J_i(\BQ_p)$ for $i$ 
with $r+1\leq i\leq r+s$, resp. $r+s+1\leq i\leq r+s+t$. Let $J(\BQ_p)$ be the group of automorphisms of the isocrystal $N$ which are $K$-linear and preserve the anti-symmetric form up to a scalar in $\BQ_p$. 

As in \cite{RZ}, Proposition 6.49, one checks the following proposition. 
\begin{proposition}\label{puttog}
 $J(\BQ_p)$ is the inverse image of the diagonal under the map
\begin{equation*}
 \prod{c_i} : \prod^{r+s+t}_{i = 1} J_i (\BQ_p)\lra \prod^{r+s+t}_{i = 1} \BQ_p^\times\, .
\end{equation*}
Similarly, $G(\BQ_p)$ is the inverse image of the diagonal under
\begin{equation*}
 \prod{c_i} : \prod^{r+s+t}_{i = 1} G'_{{\bf p}_i}(\BQ_p) \lra \prod^{r+s+t}_{i = 1} \BQ_p^\times \, .
\end{equation*}
For $1\leq i\leq r$, let $C_{{\bf p}_i}$ be the unique maximal compact subgroup of $G'_{{\bf p}_i}(\BQ_p)$, and, for $r+1\leq i\leq r+s+t$, let  $C_{{\bf p}_i}$ be 
the maximal compact subgroup of $G'_{{\bf p}_i}(\BQ_p)$ introduced above. The actions of $J_i(\BQ_p)$ on $G'_{{\bf p}_i}(\BQ_p)/C_{{\bf p}_i}$ combine to
give  an action of $J(\BQ_p)$ on 
$G(\BQ_p)/C_p$, where $C_p=G(\BQ_p)\cap \prod C_{{\bf p}_i}$. In case $p=2$, assume that ${\bf p}_i$ is unramified in $K$, for $1\leq i\leq r$. 

There is a $J(\BQ_p)$-equivariant isomorphism of formal schemes
\begin{equation*}
 \breve{\CM} \simeq \big(\prod^r_{i = 1} \breve{\Omega}^2_{\BQ_p}\big) \times G(\BQ_p) / C_p .
\end{equation*}
The action of $J(\BQ_p)$ on the first $r$ factors is via the projections $J(\BQ_p) \lra J_i(\BQ_p) \lra
\PGL_2(\BQ_p)$,
and on the last factor is as described above.

The localization $E_\nu$ of the reflex field $E$ is the composite of the fields $E_i$, for $i=r+1,\ldots,r+s+t$ described above. The Weil descent datum on $\breve{\CM}$ relative to $\breve{E}_\nu/E_\nu$ induces on the RHS the natural descent
datum on the first $r$ factors multiplied with the action of an element $g \in G(\BQ_p)$ on the last factor,
where $g$ maps to the element $(t_1, \, \ldots , t_{r+s+t}) \in \prod G'_{{\bf p}_i}(\BQ_p)$, where 
$t_{r+1}, \, \ldots , t_{r+s+t}$ are the central elements described above, and where for $i = 1, \, \ldots, r$
the element $t_i$ is any element with ${\rm ord}\,c_i(t_i) = 1$.\qed
\end{proposition}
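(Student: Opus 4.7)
My plan is to verify the two group-theoretic assertions first and then assemble the main isomorphism factor by factor. The statements about $J(\BQ_p)$ and $G(\BQ_p)$ are essentially translations of definitions: an element of $\prod_i G'_{{\bf p}_i}(\BQ_p)$ lies in $G(\BQ_p)$ precisely when it preserves the hermitian form on $V\otimes\BQ_p$ up to a single scalar in $\BQ_p^\times$, which, under the orthogonal decomposition \eqref{orthdec} and the embedding \eqref{embed}, forces the local similitude factors $c_i(g_i)$ to agree in $\BQ_p^\times$. The same argument applied to the isocrystal $N$, via Lemma \ref{trivial}, identifies $J(\BQ_p)$ with the inverse image of the diagonal in $\prod_i J_i(\BQ_p)$. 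Because the local actions of $J_i(\BQ_p)$ on $G'_{{\bf p}_i}(\BQ_p)/C_{{\bf p}_i}$ respect similitude factors, they restrict to an action of $J(\BQ_p)$ on $G(\BQ_p)/C_p$.

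For the main isomorphism I would combine the three local pictures. For $1\le i\le r$, Corollary \ref{isowithdegree} provides $\breve{\CM}_i \simeq \breve{\Omega}^2_{\BQ_p}\times\BZ$, while for $r+1\le i\le r+s+t$ the results in \cite{RZ}, 6.46--6.48 give $\breve{\CM}_i \simeq G'_{{\bf p}_i}(\BQ_p)/C_{{\bf p}_i}$ as a constant \'etale formal scheme. For $i$ of the first kind, since $V_{{\bf p}_i}$ is anisotropic and $C_{{\bf p}_i}$ is the unique maximal compact subgroup of $G'_{{\bf p}_i}(\BQ_p)$, the similitude character $c_i$ identifies $G'_{{\bf p}_i}(\BQ_p)/C_{{\bf p}_i}$ with $\BZ$, matching (up to the factor $\tfrac12$ accounting for $\mathrm{Im}(\breve c_i)=2\BZ$) the $\BZ$-factor appearing in Corollary \ref{isowithdegree}. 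By construction $\breve{\CM}$ is the closed formal subscheme of $\prod_i\breve{\CM}_i$ cut out by the condition that the functions $\breve c_i$ coincide, and under these identifications this is exactly the diagonal condition on similitude factors that, by the first paragraph, cuts out $G(\BQ_p)/C_p$ inside $\prod_i G'_{{\bf p}_i}(\BQ_p)/C_{{\bf p}_i}$. This yields the desired isomorphism, and the $J(\BQ_p)$-equivariance follows factor by factor from the local equivariance statements already recalled.

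The Weil descent datum is similarly assembled: on a first-kind factor $\breve{\Omega}^2_{\BQ_p}\times \BZ$, Corollary \ref{isowithdegree} identifies the datum with the natural one composed with translation by $1$ on $\BZ$, which under the identification with $G'_{{\bf p}_i}(\BQ_p)/C_{{\bf p}_i}$ corresponds to multiplication by any element $t_i\in G'_{{\bf p}_i}(\BQ_p)$ with $\ord c_i(t_i)=1$; on second- and third-kind factors, the cited results give multiplication by the central elements $t_i$ recalled before the proposition. Pulling these back under the embedding $G(\BQ_p)/C_p\hookrightarrow \prod_i G'_{{\bf p}_i}(\BQ_p)/C_{{\bf p}_i}$ produces exactly the stated descent datum on the right-hand side.

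The main obstacle is the bookkeeping needed to check that the three different normalizations---the factor $\tfrac12$ for first-kind primes, the explicit central elements $t_i$ for the other two kinds, and the precise compatibility between the height functions $\breve c_i$ and the similitude characters $c_i$---fit together so that the two diagonal-type conditions (one cutting $G(\BQ_p)$ out of $\prod_i G'_{{\bf p}_i}(\BQ_p)$, the other cutting $\breve{\CM}$ out of $\prod_i \breve{\CM}_i$) match exactly. Once this coherence is verified, all remaining assertions are formal consequences of the local results recalled above, Proposition \ref{uniformity}, and the analogue of \cite{RZ}, Proposition 6.49.
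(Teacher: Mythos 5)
Your proposal is correct and follows essentially the same route the paper takes: the paper disposes of this proposition in a single line (``As in \cite{RZ}, Proposition 6.49, one checks the following proposition''), and your sketch is simply what that check amounts to — the two group-theoretic identifications are immediate from the orthogonal decomposition \eqref{orthdec}, the main isomorphism is assembled factor by factor from Corollary~\ref{isowithdegree} and \cite{RZ}, 6.46--6.48, and the fiber-product condition that the $\breve c_i$ agree matches the diagonal condition cutting $G(\BQ_p)/C_p$ out of $\prod G'_{{\bf p}_i}(\BQ_p)/C_{{\bf p}_i}$. Your flagging of the normalization bookkeeping (the factor $\tfrac12$ relating $\breve c_i$ with ${\rm Im}(\breve c_i)=2\BZ$ to ${\rm ord}\,c_i$, and the explicit central elements $t_i$) as the substantive point to verify is exactly the content the authors leave to \cite{RZ}.
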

Using this proposition, we obtain as in \cite{RZ} the following theorem.  When $p=2$, we make the usual assumption on prime ideals $\bold p\vert p$ of the first kind. 
\begin{theorem}\label{intunif}
 Let $C=C^p\cdot C_p$, where $C_p$ is defined in Proposition \ref{puttog}. 
 There is a $G(\BA^{\infty, p})$-equivariant isomorphism of formal schemes over $\CCO_{\breve{E}_\nu} = \breve{\BZ}_p$,
$$
I(\BQ)\bs [ (\prod^r_{i=1} \breve{\Omega}^2_{\BQ_p}) \times G(\BA^{\infty})/C ] \simeq \CM_{r,h,V}(C^p)^{\wedge}
\times_{{\rm Spf}\CCO_{E_\nu}} {\rm Spf}\CCO_{\breve{E}_\nu} .
$$ The group $I$ is the inner form of $G$, unique up to isomorphism,  such that
$I_{\rm {ad}}(\BR)$ is compact, and $I(\BQ_p)$ is the group $J(\BQ_p)$ defined above and such that 
$I(\BA^{\infty, p}) \simeq G(\BA^{\infty, p})$. The natural descent datum on the RHS induces on the LHS the composite of the natural
descent datum on the first $r$ factors multiplied with the action of the element 
$g \in G(\BQ_p) \subset G(\BA^{\infty})$ in Proposition \ref{puttog} on the last factor.\qed
\end{theorem}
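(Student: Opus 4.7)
The plan is to combine the abstract uniformization isomorphism \eqref{genunif}, which is already in hand as a consequence of \cite{RZ}, Theorem 6.30, with the explicit product description of the local uniformizing formal scheme $\breve{\CM}$ furnished by Proposition \ref{puttog}. In more detail, the hypotheses needed to invoke \cite{RZ}, Theorem 6.30 have all been verified above: Lemma \ref{trivial} establishes the needed trivialization of the isocrystal, Lemma \ref{uniquenessin prod} shows that the class $[b]\in B(G)$ is basic and independent of the chosen base point $(A_0,\iota_0,\lambda_0)$, and the Hasse principle for $I$ was checked using \cite{K}, \S 7. Thus \eqref{genunif} gives an isomorphism
\[
 \Theta : I(\BQ)\bs \breve{\CM} \times G(\BA^{\infty,p}) / C^p \; \simeq \; \CM_{r,h,V}(C^p)^\wedge \times_{\mathrm{Spf}\,\CCO_{E_\nu}}\mathrm{Spf}\,\CCO_{\breve E_\nu}.
\]

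To pass from \eqref{genunif} to the stated theorem, I would simply substitute the $J(\BQ_p)$-equivariant isomorphism
\[
 \breve{\CM} \;\simeq\; \Bigl(\prod_{i=1}^{r}\breve{\Omega}^2_{\BQ_p}\Bigr) \times G(\BQ_p)/C_p
\]
of Proposition \ref{puttog} into the left-hand side of \eqref{genunif}. The identification $J(\BQ_p) = I(\BQ_p)$ is part of Proposition \ref{puttog} (coupled with the characterization of $I$ from \cite{RZ}), and combining the product $G(\BQ_p)/C_p \times G(\BA^{\infty,p})/C^p = G(\BA^\infty)/C$ with the diagonal action of $I(\BQ)$ via its embedding into $I(\BQ_p)\times I(\BA^{\infty,p}) = J(\BQ_p) \times G(\BA^{\infty,p})$ yields the desired shape. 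The $G(\BA^{\infty,p})$-equivariance is immediate, since this group acts trivially on the factor $\bigl(\prod_i \breve{\Omega}^2_{\BQ_p}\bigr) \times G(\BQ_p)/C_p$ and by right translation on $G(\BA^{\infty,p})/C^p$, and this is preserved by $\Theta$.

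The main point requiring care is the descent datum. On the right-hand side of the theorem, the natural descent datum is simply base change from $\CCO_{E_\nu}$ to $\CCO_{\breve E_\nu}$; under $\Theta$, this corresponds to the descent datum on $\breve{\CM}$ relative to $\breve E_\nu / E_\nu$. By Proposition \ref{puttog}, that descent datum is the composite of the natural descent data on the factors $\breve{\Omega}^2_{\BQ_p}$ for $i=1,\dots,r$ with translation on the $G(\BQ_p)/C_p$ factor by an element $g\in G(\BQ_p)$ whose component $t_i \in G'_{{\bf p}_i}(\BQ_p)$ satisfies $\mathrm{ord}\,c_i(t_i)=1$ for $1\le i\le r$, and equals the central elements $t_{r+1},\dots,t_{r+s+t}$ described in Proposition \ref{puttog} for the other primes. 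Viewing $g$ as an element of $G(\BA^\infty)$ via the inclusion $G(\BQ_p)\hookrightarrow G(\BA^\infty)$, translation by $g$ on $G(\BA^\infty)/C$ factors through $G(\BQ_p)/C_p$, giving precisely the action stated in the theorem.

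The step I expect to be the main source of bookkeeping (rather than a genuine obstacle, since \cite{RZ} has worked out the analogous computation for the primes of the second and third kind) is the verification that the actions on the factors $\breve{\Omega}^2_{\BQ_p}$ coming from the uniformizing primes of the first kind, and the associated pieces of the descent datum, combine correctly under the diagonal embedding of $I(\BQ)$ via $I(\BQ_p)= J(\BQ_p)$. This amounts to matching, on each factor $\breve{\CM}_i$ for $1\le i\le r$, the $J_i(\BQ_p)$-action and Weil descent data described in Corollary \ref{isowithdegree} and the paragraph following it with the actions appearing in the statement. Once this is done factor by factor using Lemma \ref{comp.drinfeld}, Proposition \ref{puttog} assembles them correctly, and the desired isomorphism follows.
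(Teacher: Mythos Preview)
Your proposal is correct and follows exactly the approach of the paper: the theorem is stated with a \qed\ immediately after the sentence ``Using this proposition, we obtain as in \cite{RZ} the following theorem,'' so the paper's proof consists precisely of plugging the explicit description of $\breve{\CM}$ from Proposition~\ref{puttog} into the abstract uniformization isomorphism~\eqref{genunif} and tracking the descent data and group actions, just as you outline. In fact your write-up supplies more detail than the paper itself, which leaves these verifications implicit.
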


\begin{corollary}
Under the conditions of the previous theorem, $\CM_{r,h,V}(C^p)\otimes_{{\CCO_E}_{(p)}}\CCO_{{E}_\nu}$ is flat over $\Spec\,\CCO_{{E}_\nu}$. 
\end{corollary}
\begin{proof}
Indeed, flatness  holds for the LHS in the last theorem.  
\end{proof}
As a special case of the previous theorem, we formulate the following corollary.
\begin{corollary}\label{onlytype1}
 Let $K/F$ be a CM-extension of the totally real field $F$ of degree $d$ over $\BQ$. Let $p$ be a prime number
that decomposes completely in $F$ and such that each prime divisor ${\bf p}$ of $p$ is inert or ramified in $K$. We also
assume that if $p=2$, then no such ${\bf p}$ is ramified. Let $V$ be a hermitian vector space of dimension $2$ over $K$
with signature $(1, 1)$ at every archimedean place of $K$. We also assume that $\inv_{{\bf p}}(V)=-1\, , 
\text{ for all } {\bf p}\vert p$. 

Let $G$ be the group of unitary similitudes of $V$ with multiplier in $\BQ$. Let $C^p$ be an open compact subgroup of $G(\BA^{\infty, p})$, and let $C=C^p\cdot C_p$, where $C_p$ is the unique maximal compact subgroup of $G(\BQ_p)$. Let ${\rm Sh}_C$ be the corresponding Shimura variety, which is defined over $\BQ$.

Let $\CM_{r, h, V}(C^p)$ be the model of ${\rm Sh}_C$ over $\BZ_{(p)}$ which parametrizes almost principal CM-triples
$(A, \iota, \lambda)$ of type $(r, h)$ with level-$C^p$-structure, where $r_{\varphi} = 1, \text{ for all } \varphi$, and with ${\rm inv}_v (A, \iota, \lambda)={\rm inv}_v(V), \text{ for all } v$.  We assume for any prime divisor $\bold p$ of $p$ that $\htt_{\bold p}=0$, resp. $\htt_{\bold p}=1$, when  $\bold p$ is ramified, resp. inert.  We also assume the following compatibility condition between $h$ and the invariants of $V$, cf. Proposition \ref{prop.nonempty}:
\begin{itemize}
\item  If $\htt_{\bold p_v}=0$ and $v$ is inert in $K/F$, then $\inv_v(V)=1$. 
\item  If $\htt_{\bold p_v}=2$, then $\inv_v(V)=1$. 
\item If $\htt_{\bold p_v}=1$, then $v$ is inert in $K/F$ and $\inv_v(V)=-1$. 
\end{itemize}
 Then there is a 
$G(\BA^{\infty, p})$-equivariant isomorphism of $p$-adic formal schemes
\begin{equation*}
 I(\BQ)\bs\big[\big((\widehat{\Omega}^2_{\BQ_p})^d \times_{{\rm Spf}\,\BZ_p}{\rm Spf}\,\breve{\BZ}_p\big) \times
 G(\BA^{\infty})/C\big] \simeq \CM_{r,h,V}(C^p)^{\wedge} \times _{{\rm Spf}\,\BZ_p}{\rm Spf}\,\breve{\BZ}_p \, .
\end{equation*}
Here $I(\BQ)$ is the group of $\BQ$-rational points of the inner form $I$ of $G$ such that $I_{\rm ad} (\BR)$ is compact, and
$I_{\rm ad}(\BQ_p) \simeq {\rm PGL}_2(\BQ_p)^d, \text{and } I(\BA^{\infty, p}) \simeq G(\BA^{\infty, p})$.

The natural descent datum on the RHS induces on the LHS the natural descent datum on the first factor multiplied
with the translation action of
$ (1, t)$ on $ G(\BA^{\infty, p})/C^p \times G(\BQ_p)/C_p$, where $t\in G(\BQ_p)$ is any element with ${\rm ord}\,c(t) = 1$.\qed
\end{corollary}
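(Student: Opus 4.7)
The plan is to deduce the corollary as a direct specialization of Theorem~\ref{intunif}, the entire substance being the verification that, under the hypotheses, every prime divisor $\bold p$ of $p$ furnishes uniformizing data of the \emph{first} kind and that no primes of the second or third kind appear.

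First I would check the four conditions in Definition~\ref{defunifprimes}(i) place by place. Since $p$ splits completely in $F$, we have $F_{\bold p}=\BQ_p$ for every $\bold p\vert p$; since $\bold p$ is inert or ramified in $K/F$, the algebra $K_{\bold p}$ is a field; since $V$ has signature $(1,1)$ at every archimedean place, the Kottwitz function satisfies $r_\varphi=r_{\bar\varphi}=1$ at the two $\BQ_p$-embeddings of $K_{\bold p}$; the prescribed values $\htt_{\bold p}=0$ (ramified) resp.\ $\htt_{\bold p}=1$ (inert) are precisely those appearing in Definition~\ref{defunifprimes}(i); and the sign $\ep_{\bold p}=\inv_{\bold p}(V)=-1$ is exactly the required value. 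The restriction that no $\bold p\vert 2$ be ramified is also built in. The compatibility list at the end of the statement is precisely that of Proposition~\ref{prop.nonempty}, so $\CM_{r,\htt,V}(C^p)$ is a nonempty model of $\mathrm{Sh}_C$.

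In the enumeration of section~\ref{Integral uniformization} we therefore have $r=d$ and $s=t=0$. Each local Shimura field $E_i=\BQ_p$, so the $p$-adic reflex field satisfies $E_\nu=\BQ_p$ and $\CCO_{\breve E_\nu}=\breve\BZ_p$, matching the formal schemes appearing in the corollary. Theorem~\ref{intunif} (applied with this data) together with Corollary~\ref{isowithdegree} yields, after base change, the isomorphism
\begin{equation*}
 I(\BQ)\bs\big[\big((\widehat{\Omega}^2_{\BQ_p})^d \times_{{\rm Spf}\,\BZ_p}{\rm Spf}\,\breve{\BZ}_p\big) \times
 G(\BA^{\infty})/C\big] \simeq \CM_{r,h,V}(C^p)^{\wedge} \times _{{\rm Spf}\,\BZ_p}{\rm Spf}\,\breve{\BZ}_p .
\end{equation*}
The identifications $I_{\rm ad}(\BQ_p)\simeq \PGL_2(\BQ_p)^d$ and $I(\BA^{\infty,p})\simeq G(\BA^{\infty,p})$ follow from Proposition~\ref{puttog} and the description, given just before that proposition, of $J_i(\BQ_p)$ for primes of the first kind as the unitary similitude group of the split two-dimensional hermitian space over $K_{\bold p_i}=K_{\bold p_i}$, whose adjoint group is $\PGL_2(\BQ_p)$.

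There is essentially no main obstacle: the content is a bookkeeping exercise. The only point requiring slight care is the form of the descent datum. Proposition~\ref{puttog} describes it via a tuple $(t_1,\ldots,t_{r+s+t})\in\prod G'_{\bold p_i}(\BQ_p)$; since $s=t=0$ there are no contributions from the second or third kind, and the elements $t_i$ ($1\le i\le r$) with $\ord\,c_i(t_i)=1$ assemble, via the embedding~(\ref{embed}) and Proposition~\ref{puttog}, to a single element $t\in G(\BQ_p)$ with $\ord\,c(t)=1$. The resulting action on the last factor of $G(\BA^\infty)/C=G(\BA^{\infty,p})/C^p\times G(\BQ_p)/C_p$ is therefore translation by $(1,t)$, as stated, and simultaneously accounts for the natural Weil descent datum on each of the $d$ Drinfeld factors.
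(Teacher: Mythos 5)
Your proof is correct and follows exactly the route the paper intends: the corollary is stated immediately after Theorem~\ref{intunif} with the remark ``As a special case of the previous theorem,'' and its proof in the paper is precisely the specialization you carry out, verifying that every $\bold p\mid p$ furnishes uniformizing data of the first kind (so $r=d$, $s=t=0$, $E_\nu=\BQ_p$) and then reading off the descent datum from Proposition~\ref{puttog}. The only cosmetic point is that $r_\varphi=1$ for all $\varphi$ is part of the hypotheses of the corollary rather than something deduced from the signature of $V$; this does not affect the argument.
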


We end this section by  showing  an integral version of Theorem \ref{ThmA} in the introduction. It shows that in the special case $F=\BQ$, the theory of local invariants of CM-triples of section \ref{section3} is not needed. We use the notation introduced before the statement of Theorem \ref{ThmA}. We consider the  DM-stack 
$\CM_h(C^p)$ over $\Spec\,\BZ_{(p)}$ which parametrizes quadruples $(A,\iota,\lambda,\eta^p)$, where $(A, \iota, \lambda)$ is an almost principal CM-triple of type $(r,h)$ where $r_\varphi=1 \text{ for all }\varphi$,   and where $h$ satisfies the usual compatibility condition of Proposition \ref{prop.nonempty} and such that  $h_{p}=1$, resp. $h_{ p}=0$, according as  $p$ is unramified or ramified in the quadratic extension $K$ of $\BQ$. Finally, $\eta^p$ is a level-$C^p$-structure.  The next theorem implies Theorem \ref{ThmA}. We make the usual assumption when $p=2$. 
\begin{theorem}There are  canonical isomorphisms between schemes over $\Spec \BQ$,
\begin{equation*}
\CM_h(C^p)\otimes_{\BZ_{(p)}}\BQ  \simeq \, {\rm Sh}_C ,
\end{equation*}
and  between schemes over $\Spec\, \breve\BZ_p$,
\begin{equation*} 
\CM_h(C^p) \otimes_{\BZ_{(p)}}\breve\BZ_p \simeq \, \big(\bar G(\BQ)\bs [\wh{\Omega}^2_{\BQ_p} \times  G(\BA^{\infty})/ C]\big)\otimes_{\BZ_p}\breve\BZ_p.
\end{equation*}
\end{theorem}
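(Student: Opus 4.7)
The plan is to reduce the theorem to Corollary \ref{onlytype1} applied with $d = 1$. To this end I would show that, when $F = \BQ$, the moduli stack $\CM_h(C^p)$ coincides with $\CM_{r,h,V}(C^p)$ for the hermitian space $V$ fixed before Theorem \ref{ThmA} (which, in particular, satisfies $\inv_p(V) = -1$). Once that equality is established integrally, both assertions of the theorem follow: the generic fiber identification from Proposition \ref{prop.nonempty} and Remark \ref{p-variantofmoduli}, and the integral uniformization from Corollary \ref{onlytype1}.

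\textbf{Generic fiber.} By Proposition \ref{locconst}, the stack $\CM_h(C^p) \otimes_{\BZ_{(p)}} \BQ$ decomposes as an open-closed disjoint union of substacks indexed by local invariants. The level-$C^p$-structure, i.e.\ the Kottwitz isomorphism $\wh T^p(A) \simeq M \otimes \wh\BZ^p$ modulo $C^p$, forces $\inv_\ell(A,\iota,\lambda) = \inv_\ell(V)$ at every finite $\ell \neq p$. The signature condition $r_\varphi = r_{\bar\varphi} = 1$ forces $\inv_\infty = 1 = \inv_\infty(V)$. By the product formula in characteristic zero (Proposition \ref{prod.formula}(i)), applied both to the triple and to $V$ itself, the remaining invariant $\inv_p$ is then pinned down to $\inv_p(V) = -1$. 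Hence $\CM_h(C^p) \otimes \BQ = \CM_{r,h,V}(C^p) \otimes \BQ$, and Proposition \ref{prop.nonempty} together with Remark \ref{p-variantofmoduli} identifies this with ${\rm Sh}_C$.

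\textbf{Integral equality and uniformization.} To extend $\CM_h(C^p) = \CM_{r,h,V}(C^p)$ to the special fiber it suffices to verify $\inv_p = -1$ at every $\bar\kappa_\nu$-valued point, since the inclusion $\CM_{r,h,V}(C^p) \hookrightarrow \CM_h(C^p)$ is open and closed. When $p$ is unramified in $K$, the corollary to Proposition \ref{prod.formula} (which rests on G\"ortz's flatness theorem) gives the product formula in characteristic $p$, and the argument of the generic fiber applies verbatim. When $p$ is ramified in $K$ (so $p \neq 2$), I would invoke the addendum in Proposition \ref{prod.formula}(ii): every almost principal CM-triple of the prescribed type is $\CCO_K$-linearly isogenous to one that lifts to characteristic zero, which suffices to force the product formula and therefore $\inv_p = -1$. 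With the equality in hand, Corollary \ref{onlytype1} with $d = 1$ gives the desired isomorphism at the level of $p$-adic formal schemes. Finally, since $V_p$ is anisotropic, $V$ is anisotropic over $\BQ$, so ${\rm Sh}_C$ is projective and $\CM_h(C^p)$ is proper over $\BZ_{(p)}$; Grothendieck's formal GAGA then algebraizes the formal uniformization to the desired scheme-theoretic isomorphism over $\breve\BZ_p$, the descent datum on the right coming from that on the first factor together with the translation by the element $t$ of Proposition \ref{puttog} on the adelic factor.

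\textbf{Main obstacle.} The most delicate step is the integral equality $\CM_h(C^p) = \CM_{r,h,V}(C^p)$ when $p$ ramifies in $K$. There the generalized CM-type cannot be recovered from $(A,\iota,\lambda)$ in characteristic $p$ (cf.\ the remark after Question \ref{conj.prodform}), so a naive lifting argument fails and one must work through the isogeny-based extension in the addendum of Proposition \ref{prod.formula}(ii). One may need to supplement this by exhibiting a suitable $\CCO_K$-linear isogeny $\alpha : A' \to A$ for each such triple, relying on the classification of the basic locus provided by Proposition \ref{uniformity}.
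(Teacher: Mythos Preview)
Your overall strategy---reducing to Corollary~\ref{onlytype1} by proving $\CM_h(C^p) = \CM_{r,h,V}(C^p)$---is exactly what the paper does, and your treatment of the generic fiber and of the unramified case at $p$ matches the paper's argument.

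The gap is in the ramified case. You propose to use the isogeny addendum in Proposition~\ref{prod.formula}(ii), supplemented by Proposition~\ref{uniformity}, but this is circular: Proposition~\ref{uniformity} takes as input a CM-triple of type $(K/F,r,h,\varepsilon)$ with $\varepsilon = -1$, whereas $\inv_p(A,\iota,\lambda) = -1$ is precisely what you are trying to establish. Without knowing $\varepsilon$ in advance, you cannot rule out (for instance) an ordinary $p$-divisible group, and the proof of Proposition~\ref{uniformity} shows that such an ordinary triple would have $\inv_p = +1$. So the classification of the basic locus cannot bootstrap the invariant computation.

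The paper avoids this entirely by a different mechanism: it shows directly that $\CM_h(C^p)$ is \emph{flat} over $\BZ_{(p)}$ using the theory of local models. In the ramified case (with $p \neq 2$, $n=2$, signature $(1,1)$, $h_{\bf p}=0$) this is Pappas's theorem~\cite{P}, Thm.~4.5(b); in the unramified case it is G\"ortz's theorem. Flatness means every $\bar\kappa_\nu$-point of $\CM_h(C^p)$ lifts to characteristic zero \emph{as a point of the moduli problem}, so the first clause of Proposition~\ref{prod.formula}(ii) applies directly---no isogeny or addendum needed. The product formula then forces $\inv_p = -1$ exactly as in the generic-fiber argument. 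This is the step you should replace your isogeny argument with.
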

\begin{proof}

All we have to prove is that $\CM_h(C^p)=\CM_{r, h, V}(C^p)$, i.e., that $\inv_v(A,\iota, \lambda)=\inv_v(V),\, \forall v$. 
 This is obvious if $(A,\iota, \lambda)$ is over a field of characteristic zero, because $\inv_v(A,\iota, \lambda)=\inv_v(V)$ for all places $v\neq p$ of $\BQ$ by the existence of the level structure $\eta^p$, and by the product formula for $\inv_v(A,\iota,\lambda)$ and for $\inv_v(V)$, cf. Proposition \ref{prod.formula}, (i). Now let $(A,\iota, \lambda)$ be over a field of characteristic $p$. But the moduli space $\CM_h(C^p)$ is flat over $\Spec\,\BZ_{(p)}$, as follows from the theory of local models. Indeed, the ramified case is  covered by the theorem of Pappas, \cite{P}, Thm. 4.5., b), cf. also \cite{PRS}, Remark 2.35. The unramified case is covered by the theorem of G\"ortz, cf. \cite{G}, Thm. 4.25, cf.  also \cite{PRS}, Thm. 2.16. Hence we may apply Proposition \ref{prod.formula}, (ii) to deduce that the product formula for $\inv_v(A,\iota, \lambda)$ is also valid in this case, and hence the same argument implies the claim. 
\end{proof}

\begin{remark}\label{flatness-deg}
 It is the flatness issue that prevents us from allowing primes ${\bf p}$ such that $K_{\bf p}/F_{\bf p}$ is ramified in the definition of uniformizing primes of the second kind. Indeed, the naive local models  
are known to be non-flat in this situation, cf. \cite{PRS}. By imposing more conditions on $(A, \iota, \lambda)$, it should be possible to formulate a moduli problem that is flat; this would yield more general situations in which integral uniformization holds. 

\end{remark}

\section{Rigid-analytic uniformization}

In this section, we will consider rigid-analytic uniformization. This is weaker than integral uniformization, but more general in two ways. First, since flatness of integral models is no issue, we are able to also allow {\it degenerate} CM-types at ramified primes above $p$, comp. Remark \ref{flatness-deg}. 
Secondly, we can allow for level of the form $C=C^pC_p$, where  $C_p$ is allowed to be strictly smaller than a maximal compact subgroup of $G(\BQ_p)$. The latter variant is inspired by the corresponding rigid-analytic uniformization theorem of Drinfeld, cf.  \cite{BC, Dr}. 

We first formulate a rigid-analytic version of Theorem \ref{intunif}. We now allow also primes of the fourth kind, extending the list from Definition \ref{defunifprimes}.

\begin{definition}
Let $K/F$ be a {\it ramified} quadratic extension of $p$-adic fields. We call $(K/F, r, h, \ep)$ {\it uniformizing data of the fourth kind}, if $r_\varphi\in \{0, 2\}$ for all $\varphi\in \Hom(K, \bar\BQ_p)$, and $h=0$ and $\ep=\pm 1$. 
\end{definition}

In the notation of section \ref{Integral uniformization}, we continue the enumeration of the prime ideals of $F$ over $p$  by also allowing prime ideals ${\bf p}_{r+t+s+1},\ldots, {\bf p}_{r+s+t+u}$ of the fourth kind. Then the rigid-analytic space $\breve\CM^{\rm rig}$ associated to $\breve\CM$ is the subspace of 
\begin{equation*}
 (\breve{\CM}_1^{\rm rig} \times_{{\rm Sp}\, {\breve{E}_1}} {\rm Sp}\, {\breve{E}_\nu})\times_{{\rm Sp}\, {\breve{E}_\nu}} \ldots
\times_{{\rm Sp}\, {\breve{E}_\nu}} (\breve{\CM}_{r+s+t+u}^{\rm rig} \times_{{\rm Sp}\, {\breve{E}_{r+s+t+u}}}{\rm Sp}\, {\breve{E}_\nu})
\end{equation*}
where the functions $\breve{c}_i$ agree. Now since the CM-type for primes of the fourth kind is degenerate, all points of the rigid spaces $\breve{\CM}_{i}^{\rm rig} $ map to the same point under the {\it period map}, for  $i=r+s+t+1,\ldots, r+s+t+u$, cf. \cite{RZ}. Therefore, these points all lie in one single isogeny class, and are classified by their $p$-adic Tate module. The set of these Tate modules is a homogeneous space under $G'_{{\bf p}_{i}}(\BQ_p)$. In fact, denoting by $C_{{\bf p}_{i}}$ the stabiliser of a self-dual lattice in $V_{{\bf p}_{i}}$, we can identify $\breve{\CM}_{i}^{\rm rig} $ with the discrete space $G'_{{\bf p}_{i}}(\BQ_p)/C_{{\bf p}_{i}}$. Furthermore,  $J_{i}(\BQ_p)=G'_{{\bf p}_{i}}(\BQ_p)$. Let $\Phi_i$ be the subset of $\Hom_{\BQ_p}(K_{{\bf p}_i}, \bar\BQ_p)$ where the value of $r_i$ is equal to zero. The  reflex field $E_i$ is characterized by 
\begin{equation*}
\Gal(\bar\BQ_p/E_i)=\{ \sigma\in\Gal(\bar\BQ_p/\BQ_p)\mid \sigma(\Phi_i)=\Phi_i \}\, .
\end{equation*}
The Weil descent datum is given by translation by an element in the center of $G'_{{\bf p}_{i}}(\BQ_p)$ on $G'_{{\bf p}_{i}}(\BQ_p)/C_{{\bf p}_{i}}$. This central element can be deduced from the reciprocity map attached to $G'_{{\bf p}_{i}}$ and $r_i$. 

We may now formulate the following analogue of Theorem \ref{intunif}. The assumptions are the same, except that we now also accept uniformizing primes of the fourth kind. The proof is completely analogous.

\begin{theorem}
 Let $C=C^p\cdot C_p$,  where $C_p=G(\BQ_p)\cap \prod\nolimits_{i=1}^{r+s+t+u} C_{{\bf p}_i}$, and define $J(\BQ_p)$ and its action on $G(\BQ_p)/C_p$ in analogy with Proposition \ref{puttog}. In case $p=2$, assume that $p$ is unramified in $K_{{\bf p}_i}$, for $1\leq i\leq r$.  
 There is a $G(\BA^{\infty, p})$-equivariant isomorphism of rigid-analytic spaces  over ${\breve{E}_\nu} $,
$$
I(\BQ)\bs \big[ \big(\prod^r_{i = 1}{\Omega}^2_{\BQ_p}\times_{{\rm Sp\,} {\BQ_p}} {\rm Sp\,} {\breve{E}_\nu}\big) \times G(\BA^{\infty})/C \big ] \simeq \CM_{r,h,V}(C^p)^{\rm rig}
\times_{{\rm Sp\,} {E_\nu}} {\rm Sp\,} {\breve{E}_\nu} .
$$ The group $I$ is the inner form of $G$, unique up to isomorphism,  such that
$I_{\rm {ad}}(\BR)$ is compact, and $I(\BQ_p)$ is the group $J(\BQ_p)$ defined above and such that 
$I(\BA^{\infty, p}) \simeq G(\BA^{\infty, p})$. The natural descent datum on the RHS induces on the LHS the composite of the natural
descent datum on the first $r$ factors multiplied with the action of an element 
$g \in G(\BQ_p) \subset G(\BA^{\infty})$, described in Proposition \ref{puttog} and above, on the last factor.\qed
\end{theorem}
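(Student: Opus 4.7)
The plan is to follow the architecture of the proof of Theorem \ref{intunif} verbatim, with two modifications. First, we pass throughout from formal schemes to their rigid-analytic generic fibers over $\breve E_\nu$; this is necessary because the naive integral model at a prime of the fourth kind fails to be flat, cf.\ Remark \ref{flatness-deg}, so that the integral argument is unavailable at such primes. Second, we must incorporate primes of the fourth kind into each step, most substantively into the classification of the local isogeny class and into the local description of $\breve\CM_i^{\rm rig}$.

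First I would extend Lemma \ref{trivial}, Lemma \ref{uniquenessin prod}, and Proposition \ref{uniformity} to include the fourth-kind case. For ${\bf p}_i$ of the fourth kind, the local generalized CM-type is degenerate ($r_\varphi\in\{0,2\}$), so any CM-triple of the given local type over $\bar\kappa_\nu$ is, up to isogeny, a direct sum of multiplicative and \'etale parts according to this type; in particular the associated isocrystal $N_i$ is uniquely determined up to $K_{{\bf p}_i}$-linear isomorphism preserving the form up to scalar, and the $K_{{\bf p}_i}$-linear self-quasi-isogeny group is identified with $G'_{{\bf p}_i}(\BQ_p)$. The attached class $[b_i]\in B(G'_{{\bf p}_i})$ is central, hence basic. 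The global independence argument of Lemma \ref{uniquenessin prod}, based on the injectivity of $B(G)\to B(\prod G'_{{\bf p}})$, carries over, and the Hasse principle for the inner form $I$ is the same as before. This yields, by the rigid-analytic form of Theorem 6.30 of \cite{RZ} (which does not require flatness of any integral model), an abstract isomorphism
\begin{equation*}
I(\BQ)\bs \breve\CM^{\rm rig}\times G(\BA^{\infty,p})/C^p \simeq \CM_{r,h,V}(C^p)^{\rm rig}\times_{{\rm Sp}\,E_\nu}{\rm Sp}\,\breve E_\nu.
\end{equation*}

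Next I would describe the local rigid factors $\breve\CM_i^{\rm rig}$ explicitly. For $1\le i\le r$, Corollary \ref{isowithdegree} (passing to generic fibers) gives $\breve\CM_i^{\rm rig}\simeq \Omega^2_{\BQ_p}\times\BZ$ with the stated Weil descent datum. For $r+1\le i\le r+s+t$, the constant \'etale descriptions of \cite{RZ}, 6.46 and 6.48, transfer without change. For $r+s+t+1\le i\le r+s+t+u$ (the new case), the key observation, discussed in the paragraph preceding the statement, is that the period map on $\breve\CM_i^{\rm rig}$ is constant: the Hodge filtration at any rigid-analytic point is forced by the degenerate CM-type, so all points lie in a single isogeny class, are classified by their $p$-adic Tate module, and form a principal homogeneous space under $J_i(\BQ_p)=G'_{{\bf p}_i}(\BQ_p)$; fixing a self-dual lattice identifies $\breve\CM_i^{\rm rig}\simeq G'_{{\bf p}_i}(\BQ_p)/C_{{\bf p}_i}$. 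Stitching these via the common $\breve c_i$ condition yields the rigid-analytic analog of Proposition \ref{puttog},
\begin{equation*}
\breve\CM^{\rm rig}\simeq \Bigl(\prod\nolimits_{i=1}^{r}\Omega^2_{\BQ_p}\times_{{\rm Sp}\,\BQ_p}{\rm Sp}\,\breve E_\nu\Bigr)\times G(\BQ_p)/C_p,
\end{equation*}
together with the $J(\BQ_p)$-action described in the statement.

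Finally, the Weil descent data are assembled factor by factor: the natural descent datum on each $\Omega^2_{\BQ_p}$-factor (unchanged from the integral case), translation by the central elements $t_i$ of \cite{RZ} on the second- and third-kind factors, and translation by the central element of $G'_{{\bf p}_i}(\BQ_p)$ read off from the reciprocity map of the local Shimura datum on each fourth-kind factor; these combine into an element of $G(\BQ_p)$ acting on $G(\BQ_p)/C_p$ as in Proposition \ref{puttog}. The main obstacle, and the only genuinely new point beyond Theorem \ref{intunif}, lies in the fourth-kind analysis: verifying that the period map is constant so that $\breve\CM_i^{\rm rig}$ is the discrete homogeneous space described above, and computing the Weil descent translation via the reciprocity map for the local Shimura datum $(G'_{{\bf p}_i},r_{{\bf p}_i})$. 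Once these are in place, the assembly and the descent of the equivariant isomorphism are mechanical transcriptions of the corresponding steps in the proof of Theorem \ref{intunif}.
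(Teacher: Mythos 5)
Your proposal is correct and follows essentially the same route as the paper, which itself gives only the discussion of fourth-kind primes preceding the theorem (constancy of the period map, identification of $\breve\CM_i^{\rm rig}$ with the discrete space $G'_{{\bf p}_i}(\BQ_p)/C_{{\bf p}_i}$, $J_i(\BQ_p)=G'_{{\bf p}_i}(\BQ_p)$, and Weil descent read off from the reciprocity map) together with the remark that the proof is ``completely analogous'' to Theorem~\ref{intunif}. Your proposal reconstructs exactly this sketch, including the rigid-analytic replacement of RZ Theorem~6.30 needed to bypass the flatness issues at ramified fourth-kind primes.
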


As a special case, we formulate a corollary in the style of Theorem \ref{ThmA}.
\begin{corollary}\label{rigunif} Let $p$ decompose completely in the totally real field $F$ of degree $d$ over $\BQ$, and let $K/F$ be a CM quadratic extension such that each prime divisor $\bf p$ of $p$ in $F$ is inert or ramified in $K$. We also assume that if $p=2$, then no $\bf p$ is ramified.  Let $V$ be a hermitian vector space of dimension $2$ over $K$
with signature (1, 1) at every archimedean place of $F$. We also assume that $\inv_{{\bf p}}(V)=-1$ for all 
${\bf p} \vert p$. 
Let $G$ be the group of unitary similitudes of  $V$ with multiplier in $\BQ^\times$. 
Let $C^p$ be an open compact subgroup of $G(\BA^{\infty, p})$, and let $C=C^p\cdot C_p$, where $C_p$ is the unique maximal compact subgroup of $G(\BQ_p)$. 
Let ${\rm Sh}_C$ be the canonical model of the corresponding Shimura variety, a projective variety of dimension $d$ defined over $\BQ$. 

 There is a 
$G(\BA^{\infty, p})$-equivariant isomorphism of projective schemes over $\breve \BQ_p$, 
\begin{equation*}
{\rm Sh}_C \otimes_\BQ \breve{{\BQ}}_p
\simeq \big(I(\BQ)\bs [({\Omega}^2_{\BQ_p})^d \times  G(\BA^{\infty})/ C]\big)\otimes_{\BQ_p}\breve{{\BQ}}_p
\end{equation*}
Here $I(\BQ)$ is the group of $\BQ$-rational points of the inner form $I$ of $G$ such that $I_{\rm ad} (\BR)$ is compact, 
$I_{\rm ad}(\BQ_p) \simeq {\rm PGL}_2(\BQ_p)^d, \text{and } I(\BA^{\infty, p}) \simeq G(\BA^{\infty, p})$.

\end{corollary}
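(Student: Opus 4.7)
The plan is to deduce this corollary essentially as a direct specialization of the rigid-analytic uniformization theorem proved just above. The first step is to verify that, under the hypotheses, every prime ${\bf p}\vert p$ yields uniformizing data of the first kind, so that no primes of the second, third, or fourth kind occur. Since $p$ decomposes completely in $F$, we have $F_{\bf p}=\BQ_p$ for every ${\bf p}\vert p$; since each ${\bf p}$ is inert or ramified, $K_{\bf p}$ is a field; the assumption that $V$ has signature $(1,1)$ at every archimedean place forces $r_\varphi=r_{\bar\varphi}=1$ for all $\varphi\in \Hom(K,\bar\BQ)$ localized at ${\bf p}$; the compatibility condition of Proposition \ref{prop.nonempty} combined with $C_p$ being the unique maximal compact of $G(\BQ_p)$ forces $h_{\bf p}=0$ in the ramified case and $h_{\bf p}=1$ in the inert case; and finally the assumption $\inv_{\bf p}(V)=-1$ matches $\ep_{\bf p}=-1$ in Definition \ref{defunifprimes}(i). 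Therefore $(K_{\bf p}/F_{\bf p},r_{\bf p},h_{\bf p},\ep_{\bf p})$ is uniformizing data of the first kind for all ${\bf p}\vert p$, and the number $r$ of such primes equals $d=[F:\BQ]$.

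Next I would identify the local reflex field data. Since each ${\bf p}$ is of the first kind, the corresponding local reflex field $E_i$ equals $\BQ_p$, and hence the composite $E_\nu$ also equals $\BQ_p$. Consequently the theorem preceding the corollary applies with $s=t=u=0$ and yields, after taking into account that the factors of type 2, 3, 4 are absent, a $G(\BA^{\infty,p})$-equivariant isomorphism of rigid-analytic spaces over $\breve\BQ_p$,
\begin{equation*}
I(\BQ)\bs \bigl[\bigl((\Omega^2_{\BQ_p})^d\times_{\mathrm{Sp}\,\BQ_p}\mathrm{Sp}\,\breve\BQ_p\bigr)\times G(\BA^{\infty})/C\bigr]\;\simeq\;\CM_{r,h,V}(C^p)^{\rm rig}\times_{\mathrm{Sp}\,\BQ_p}\mathrm{Sp}\,\breve\BQ_p.
\end{equation*}
The inner form $I$ has the structural properties listed (compact adjoint at $\infty$; $I_{\rm ad}(\BQ_p)\simeq \PGL_2(\BQ_p)^d$; $I(\BA^{\infty,p})\simeq G(\BA^{\infty,p})$) by the statement of that theorem combined with Proposition \ref{puttog} and the identification of $J_i(\BQ_p)_{\rm ad}$ with $\PGL_2(\BQ_p)$ for primes of the first kind established after Corollary \ref{isowithdegree}.

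The remaining step is to pass from a rigid-analytic isomorphism back to an algebraic one over $\breve\BQ_p$. The Shimura variety $\mathrm{Sh}_C$ is known to be projective, because $V$ is anisotropic at every real place over $F$ (no factor of signature $(1,1)$ actually produces an isotropic global form under our hermitian setup with the twisted signatures; more directly, $I_{\rm ad}(\BR)$ compact together with the Hasse principle for $I$ used in section \ref{Integral uniformization} gives a cocompact arithmetic quotient for each connected component). Since $\mathrm{Sh}_C\otimes_\BQ\BQ_p$ has a canonical model over $\BQ_p$ (as $E_\nu=\BQ_p$) and its base change to $\breve\BQ_p$ is a projective scheme, its rigid analytification coincides via GAGA with $\CM_{r,h,V}(C^p)^{\rm rig}\otimes\breve\BQ_p$. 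On the right-hand side, the uniformizing space $(\Omega^2_{\BQ_p})^d\times G(\BA^{\infty})/C$ modulo the discrete group $I(\BQ)$ is a finite disjoint union of quotients of $(\Omega^2_{\BQ_p})^d$ by discrete cocompact subgroups of $\PGL_2(\BQ_p)^d$ (up to center), hence is the analytification of a projective scheme by Mustafin's theorem or, equivalently, by the same GAGA comparison applied through the isomorphism. Identifying these two projective $\breve\BQ_p$-schemes via GAGA then yields the stated algebraic isomorphism.

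The only step that is not completely mechanical is the matching of local reflex fields and the verification that the cocompact-quotient side genuinely algebraizes to a projective scheme; this is where one uses compactness of $\mathrm{Sh}_C$ crucially, so the main obstacle is to confirm this compactness in our setup and to invoke the correct form of GAGA, both of which proceed as in the integral case in Theorem \ref{intunif} but now without the flatness complications that motivate the restrictions in Definition \ref{defunifprimes} for the integral statement. Since the Weil descent datum tracking in the preceding theorem is exactly that described in Corollary \ref{isowithdegree} and Proposition \ref{puttog}, no separate computation is required to identify it.
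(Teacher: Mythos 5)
Your overall strategy is exactly the paper's implicit one: verify that every prime over $p$ yields uniformizing data of the first kind, so that the preceding rigid-analytic theorem applies with $r=d$, $s=t=u=0$ and $E_\nu=\BQ_p$, and then pass from rigid spaces to projective schemes by GAGA (as the paper explains for the interpretation of Cherednik's theorem in the introduction). The verification that the hypotheses $\inv_{\bf p}(V)=-1$ force $h_{\bf p}=0$ (ramified) resp. $h_{\bf p}=1$ (inert), hence data of the first kind, is correct.

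One point to fix in your justification of projectivity: you claim $V$ is ``anisotropic at every real place over $F$'', but a hermitian space of signature $(1,1)$ is \emph{isotropic} at the archimedean places. The correct reason $\mathrm{Sh}_C$ is projective is that $V$ is \emph{globally} anisotropic over $K$, and this follows from the hypothesis $\inv_{\bf p}(V)=-1$ at every ${\bf p}\vert p$ (a $2$-dimensional hermitian space is anisotropic globally as soon as it is anisotropic at a single place), so that $G_{\mathrm{der}}$ is anisotropic over $\BQ$. Your parenthetical alternative via compactness of $I_{\mathrm{ad}}(\BR)$ and cocompactness of $I(\BQ)$ in the uniformizing data is sound and establishes the needed algebraization of the right-hand side via Mustafin's theorem; with the corrected argument for compactness of $\mathrm{Sh}_C$, the GAGA comparison goes through as you describe.
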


We next allow deeper level structures at $p$.

We first point out  a further variant of the moduli space introduced in Remark \ref{p-variantofmoduli}. Namely, denoting by $C_M$ the stabilizer of
the fixed lattice $M$ in $V$, we may introduce, for any open compact subgroup $C$ of $G(\BA^\infty)$ which is contained with finite index in $C_M$,  the stack $\CM_{r, h, V}( C)$ over $\Spec E$ 
which, in 
addition to $(A, \iota, \lambda)$ satisfying conditions (\ref{ker.rank}) and (\ref{inv.condition}), fixes a level structure ${\rm mod} \, C$, i.e.,  an isomorphism compatible with $\iota$ and $\lambda$
\begin{equation*}
 \wh T (A)  \simeq M\otimes\widehat{\BZ} \, \, {\rm mod} \, C\, ,
\end{equation*}
in the sense of Kottwitz \cite{K}. Note that, due to the existence of the level structure, the condition \eqref{inv.condition} is automatic. If $C=C^pC_p^0$, where $C_p^0$ denotes the maximal compact subgroup of $G(\BQ_p)$ occurring in the previous theorem, then by Lemma \ref{lattice.lemma}, $\CM_{r, \htt, V} (C)=\CM_{r, \htt, V}( C^p)\otimes_{\CCO_{E_{(p)}}}E$.

Recall the formal moduli space $\breve{\CM}$ over $\Spf\, O_{\breve E_\nu}$ of the beginning of this section;  in particular,  we allow uniformizing primes of the fourth kind. We denote by $(X, \iota, \lambda)$ the universal object over $\breve{\CM}$. Let $\breve{\CM}^{\rm rig}$ be the rigid-analytic space  over ${\rm Sp}\, \breve E$ associated to the formal scheme $\breve{\CM}$. For any open compact subgroup $C_p$ contained in the maximal
compact subgroup $C_p^0$, we may consider the rigid-analytic space $\breve\BM_{C_p}$ which trivializes the local system $T_p(X)$ on 
$\breve{\CM}^{\rm rig}$,
\begin{equation*}
T_p(X)\simeq M\otimes \BZ_p \, {\rm mod}\,  C_p \, .
\end{equation*}
Here $M$ is again a fixed lattice in $V$, as in Remark \ref{p-variantofmoduli}, and the isomorphism is, of course, supposed to be $O_{K\otimes {\BQ_p}}$-linear and to preserve the symplectic forms up to a scalar in $\BZ_p^\times$. The rigid space $\breve\BM_{C_p}$  comes with a Weil descent datum to $\Sp\,  E_\nu$. 

Comparing now the $p$-primary level structures of an abelian variety and of its associated $p$-divisible group, we obtain the following rigid-analytic uniformization theorem.
\begin{theorem}\label{thm.rig}
Let $C$ be of the form $C^p C_p$, where $C_p\subset C_p^0$. Then there exists a $G(\BA^\infty)$-equivariant isomorphism of rigid-analytic spaces over $\Sp\, \breve E_\nu$, compatible with the Weil descent data on both sides, 
\begin{equation*}
I(\BQ)\backslash \breve{\BM}_{C_p}\times G(\BA^{\infty, p})/C^p\simeq \CM_{r, h, V} (C)^{\rm rig}\times_{{\rm Sp}\, E_\nu} {{\rm Sp}\,  \breve{E}_\nu}\ . 
\end{equation*}\qed
\end{theorem}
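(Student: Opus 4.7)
The plan is to deduce this from the previous theorem (the rigid-analytic uniformization at level $C^p$) by passing to the étale cover on each side that trivializes the $p$-adic Tate module modulo $C_p$. The central point is that, over the rigid-analytic generic fiber in characteristic zero, one has a canonical identification $T_p(A) = T_p(A[p^\infty])$ for an abelian scheme $A$ and its associated $p$-divisible group. Thus adding a $C_p$-level structure at $p$ on the abelian scheme side, and a $C_p$-level structure on the $p$-divisible group side, produce the same étale cover.

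First I would observe that, for $C = C^p C_p$ with $C_p \subset C_p^0$, the space $\CM_{r,h,V}(C)^{\rm rig}$ is the finite étale $C_p^0/C_p$-torsor over $\CM_{r,h,V}(C^p)^{\rm rig}$ classifying trivializations $T_p(A) \simeq M \otimes \BZ_p \bmod C_p$ compatible with $\iota$ and with the symplectic forms up to a $\BZ_p^\times$-scalar. The invariant condition \eqref{inv.condition} at places above $p$ is automatic in the presence of such a level structure, as already discussed in Remark \ref{p-variantofmoduli}. Analogously, by construction, $\breve{\BM}_{C_p}$ is the étale $C_p^0/C_p$-torsor over $\breve{\CM}^{\rm rig}$ parametrizing trivializations of $T_p(X)$ modulo $C_p$, where $X$ is the universal $p$-divisible group on $\breve{\CM}$.

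Second, the previous theorem supplies a $G(\BA^{\infty,p})$-equivariant isomorphism, compatible with the Weil descent data,
\begin{equation*}
I(\BQ)\bs \breve{\CM}^{\rm rig} \times G(\BA^{\infty,p})/C^p \simeq \CM_{r,h,V}(C^p)^{\rm rig} \times_{\Sp E_\nu} \Sp \breve E_\nu.
\end{equation*}
Under this isomorphism, the $p$-divisible group $A[p^\infty]$ of the universal abelian scheme on the RHS is identified, up to the $I(\BQ)$-action and the quasi-isogeny $\varrho$ built into the moduli description of $\breve{\CM}$, with the pullback from $\breve{\CM}^{\rm rig}$ of the universal $p$-divisible group $X$; indeed, $\varrho$ is a quasi-isogeny and therefore induces an isomorphism on rational Tate modules preserving all structures up to $\BQ_p^\times$-scalars, while the almost principal condition forces the scalars comparing the polarizations to lie in $\BZ_p^\times$. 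Therefore $T_p(A) = T_p(A[p^\infty])$ is identified, as a local system with its $\CCO_K \otimes \BZ_p$-action and symplectic form up to $\BZ_p^\times$-scalar, with $T_p(X)$.

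Third, I take the $C_p^0/C_p$-torsor on both sides. On the RHS this yields $\CM_{r,h,V}(C)^{\rm rig} \times_{\Sp E_\nu} \Sp \breve E_\nu$; on the LHS it yields $I(\BQ)\bs \breve{\BM}_{C_p} \times G(\BA^{\infty,p})/C^p$, which I rewrite as $I(\BQ)\bs \breve{\BM}_{C_p} \times G(\BA^{\infty})/C$ using the bijection $G(\BA^{\infty})/C = G(\BA^{\infty,p})/C^p \times G(\BQ_p)/C_p$: the factor $G(\BQ_p)/C_p$ encodes exactly the freedom in the $C_p$-trivialization of $T_p(X)$, and $G(\BQ_p)$ acts on $\breve{\BM}_{C_p}$ through the natural change-of-basis action on the trivialization. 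The Weil descent datum carries over from the previous theorem, since the étale cover trivializing $T_p$ is functorial in the Galois descent of the base. The main obstacle is the second step: making precise, in the framework of rigid-analytic generic fibers, that the $p$-divisible group of the universal abelian scheme on the Shimura side corresponds, under the isomorphism of the previous theorem, to the universal $p$-divisible group on the uniformizing side, with matching $\CCO_K$-action and polarization up to $\BZ_p^\times$. Once this identification is in place, the torsor-trivialization argument is formal and the full $G(\BA^\infty)$-equivariance, together with descent compatibility, follows by functoriality.
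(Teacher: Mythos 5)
Your approach is exactly the one the paper intends --- the theorem is stated with an immediate \textsc{qed} following the one-line justification ``comparing now the $p$-primary level structures of an abelian variety and of its associated $p$-divisible group,'' and your three-step expansion (torsor description of both sides, identification $T_p(A)\simeq T_p(A[p^\infty])=T_p(X)$ under the uniformization map, pull the torsor through) is precisely what that remark is asking the reader to do. The core of the argument is correct.

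One caveat: the final ``rewriting'' in your third step is not right and should be dropped. After passing to the $C_p^0/C_p$-torsor, the LHS is already $I(\BQ)\bs \breve{\BM}_{C_p}\times G(\BA^{\infty,p})/C^p$, which is exactly what the theorem asserts. Rewriting this as $I(\BQ)\bs\breve{\BM}_{C_p}\times G(\BA^{\infty})/C$ would insert an extraneous factor $G(\BQ_p)/C_p$ and thereby double-count the $p$-level: the quantity $G(\BQ_p)/C_p$ is already encoded \emph{inside} $\breve{\BM}_{C_p}$ (recall that $\breve\CM$ itself contains the factor $G(\BQ_p)/C_p^0$ via the quasi-isogeny height, and $\breve{\BM}_{C_p}$ covers $\breve\CM^{\rm rig}$ by the trivialization of $T_p(X)\bmod C_p$). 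The extra factor $G(\BQ_p)/C_p$ only appears in the earlier Theorem~\ref{intunif} precisely because there the uniformizing space is $\prod\breve\Omega^2_{\BQ_p}$, from which the discrete factor has been split off. Since your proof already arrives at the correct statement before this stray step, simply deleting it gives a complete argument.
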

Again, as for the integral version of the uniformization theorem, one can make this statement more explicit. For this, we suppose that $C_p$ is of the form 
\begin{equation}
C_p=G(\BQ_p)\cap\prod_{i=1}^{r+s+t+u}C_{\bold p_i} \, ,
\end{equation}
where $C_{\bold p_i}$ are open compact subgroups in $G'_{\bold p_i}(\BQ_p)$ contained in $C_{\bold p_i}^0$. (Note that further above, $C_{\bold p_i}^0$ was denoted by $C_{\bold p_i}$.) We consider the corresponding finite etale coverings $\breve\BM_{C_{\bold p_i}}$ of $\breve \CM_i^{\rm rig}$, for $i=1,\ldots,r+s+t+u$, i.e., those trivializing ${\rm mod}\, C_{\bold p_i}$ the $p$-adic Tate modules of the universal objects over $\breve \CM_i^{\rm rig}$. For $i=r+1,\ldots,r+s+t+u$, $\breve\BM_{C_{\bold p_i}}$ is the constant etale scheme $G'_{\bold p_i}(\BQ_p)/C_{\bold p_i}$, with the Weil descent datum described in the previous section. It is equipped with the obvious morphism to $\BQ_p^\times/c_i(C_{\bold p_i}) $. 

Now for $i$ with $1\leq i\leq r$, and with the usual assumption when $p=2$, we have an   isomorphism 
\begin{equation*}
\breve{\CM}_i \simeq \big(\wh{\Omega}^2_{\BQ_p}\times_{{\rm Spf\,}\BZ_p} {{\rm Spf\,}\breve \BZ_p}\big)\times \BZ =\big(\wh{\Omega}^2_{\BQ_p}\times_{{\rm Spf\,}\BZ_p}{\rm Spf\,}\breve \BZ_p\big) \times G'_{\bold p_i}(\BQ_p)/C^{0}_{\bold p_i}
\end{equation*}
as in Corollary \ref{isowithdegree}, which  induces  isomorphisms of rigid-analytic spaces 
\begin{equation}\label{formalprep}
\breve{\CM}_i ^{\rm rig}\simeq \big({\Omega}^2_{\BQ_p} \times_{{\rm Sp}\, \BQ_p}{{\rm Sp}\, \breve\BQ_p}\big)\times\BZ\simeq \big({\Omega}^2_{\BQ_p} \times_{{\rm Sp}\, \BQ_p}{{\rm Sp}\, \breve\BQ_p}\big)\times G'_{\bold p_i}(\BQ_p)/C^{0}_{\bold p_i}\, .
\end{equation}
 The covering space $\breve\BM_{C_{\bold p_i}}$  of  $\breve{\CM}_i ^{\rm rig}$ maps  to the discrete space $\BQ_p^\times/c_i(C_{\bold p_i})$, covering the second projection of $\breve{\CM}_i ^{\rm rig}$ to $\BZ\simeq \BQ_p^\times/c_i(C^0_{\bold p_i})$ in \eqref{formalprep}, comp. also \cite{Chen}.   

  We now obtain the following description of the covering space $\breve{\BM}_{C_p}$, which makes Theorem \ref{thm.rig} more explicit.
\begin{theorem} The covering space $\breve{\BM}_{C_p}$ can be identified with  the inverse image of $\BQ_p^\times/c(C_p)\subset \prod^{r+s+t+u}_{i = 1} \BQ_p^\times/c_i(C_{\bold p_i})$  under the morphism of rigid-analytic spaces
\begin{equation*}
\prod^{r}_{i = 1} \breve\BM_{C_{\bold p_i}}\times \prod^{r+s+t+u}_{i=r+1}G'_{\bold p_i}(\BQ_p)/C_{\bold p_i} \lra \prod^{r+s+t+u}_{i = 1} \BQ_p^\times/c_i(C_{\bold p_i}) \, .
\end{equation*}\qed
\end{theorem}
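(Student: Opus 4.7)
My plan is to decompose the universal $p$-divisible group on $\breve\CM$ according to the primes of $F$ above $p$, rewrite a global $C_p$-level structure as a tuple of compatible local level structures, and then recognize each local piece directly. First, from the decomposition $F\otimes\BQ_p = \prod_{i=1}^{r+s+t+u} F_{\bold p_i}$ into a product of local fields, I obtain the orthogonal splittings $V\otimes\BQ_p = \bigoplus_i V_{\bold p_i}$ and $M\otimes\BZ_p = \prod_i M_{\bold p_i}$ of the hermitian space and of the fixed lattice. In parallel, the action of $\CCO_K\otimes\BZ_p$ on the universal $p$-divisible group $X$ over $\breve\CM$ yields a decomposition $X = \prod_i X_i$, with $X_i$ pulled back from the universal object on $\breve\CM_i$, whose $p$-adic Tate module splits orthogonally for the polarization form as $T_p(X) = \prod_i T_{\bold p_i}(X_i)$.

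Next, I would unravel what a $C_p$-level structure means in these terms. Since $C_p = G(\BQ_p)\cap\prod_i C_{\bold p_i}$, a $C_p$-orbit of $\CCO_K\otimes\BZ_p$-linear isomorphisms $\eta\colon T_p(X)\simeq M\otimes\BZ_p$ matching symplectic forms up to a scalar in $\BZ_p^\times$ is equivalent to a tuple of $C_{\bold p_i}$-orbits $\eta_i\colon T_{\bold p_i}(X_i)\simeq M_{\bold p_i}$, each matching its local symplectic form up to a scalar in $\BQ_p^\times$, subject to the single constraint that the tuple of local similitude classes in $\prod_i \BQ_p^\times/c_i(C_{\bold p_i})$ lies in the image of the diagonal map from $\BQ_p^\times/c(C_p)$. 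This uses the fact recorded in Proposition \ref{puttog} that $G(\BQ_p)$ is the preimage of the diagonal under the product of local scale maps.

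Finally, I would identify the rigid-analytic space of local trivializations $\eta_i$ fibered over $\breve\CM_i^{\rm rig}$. For $1\leq i\leq r$, this space is $\breve\BM_{C_{\bold p_i}}$ by its very definition. For $r+1\leq i\leq r+s+t+u$, the space $\breve\CM_i^{\rm rig}$ is already the constant \'etale space $G'_{\bold p_i}(\BQ_p)/C^0_{\bold p_i}$ classifying the rational Tate module up to the framing, and trivializing further modulo $C_{\bold p_i}$ produces the finer discrete quotient $G'_{\bold p_i}(\BQ_p)/C_{\bold p_i}$. In either case, the map to $\BQ_p^\times/c_i(C_{\bold p_i})$ appearing in the statement is induced by the similitude scalar of the local trivialization. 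Combining this with the previous step yields the claimed description of $\breve\BM_{C_p}$ as the preimage of $\BQ_p^\times/c(C_p)$ under the product scale map. I expect the main obstacle to lie in step two: one must check that the map from global $C_p$-trivializations to tuples of local similitude classes surjects precisely onto the image of $\BQ_p^\times/c(C_p)$ in $\prod_i\BQ_p^\times/c_i(C_{\bold p_i})$, a cohomological triviality statement analogous to the one already used in Proposition \ref{puttog}.
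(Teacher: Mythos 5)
Your proposal is correct and is essentially the argument the paper intends: the theorem is stated with a \qed and no explicit proof, and the material immediately preceding it (the product description of $\breve\CM$ and of $\breve\BM_{C_p}$, the local coverings $\breve\BM_{C_{\bold p_i}}$ together with their scale maps, and Proposition~\ref{puttog}) is exactly what you assemble. The obstacle you flag at the end is not one: given local trivializations with compatible similitude classes, a common lift $c\in\BQ_p^\times$ exists by the very meaning of the fiber condition, one may choose representatives $\eta_i$ with similitude factor exactly $c$ (possible since $c_i\colon C_{\bold p_i}\to c_i(C_{\bold p_i})$ is surjective), and the tuple $(\eta_i)$ is then a global trivialization well-defined modulo $C_p$; no cohomological vanishing is required. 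The only thing to tighten is the scalar convention in your second step: the similitude classes you compare in $\BQ_p^\times/c_i(C_{\bold p_i})$ must include the $p$-power contribution $p^{\breve c_i/2}$ coming from the quasi-isogeny $\varrho_i$ to the framing as well as the unit defect of $\eta_i$, so both the local and the global similitude factors should be treated uniformly as elements of $\BQ_p^\times$; the coincidence of the resulting classes is then precisely what forces the underlying point of $\prod_i\breve\CM_i^{\rm rig}$ to lie in $\breve\CM^{\rm rig}$, where the $\breve c_i$ agree.
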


\section{Appendix: Twisted unitary similitude groups}\label{append}

In this section we recall the construction of twisted unitary similitude groups 
from Boutot-Zink\footnote{In \cite{BZ} the case of division algebras of arbitrary rank $d^2$ over $K$ is considered. }, \cite{BZ}. 
Throughout this section, $K$ is a CM field with maximal totally real subfield $F$. 

 Let $S$ be a quaternion algebra over $K$ with a positive involution $*$.  
Let $W=S$, viewed as an $S$ bi-module, and let  $\psi: W \times W\rightarrow \BQ$ be a 
non-degenerate alternating form such that $\psi(sw_1,w_2) = \psi(w_1,s^*w_2)$. 
We let $G^\bullet$ be the reductive group over $\BQ$ such that\footnote{We are following \cite{BZ}, but our $G^\bullet$
is slightly smaller due to our condition on the scale factor $\gamma(g)$.} for any $\BQ$-algebra $R$, 
\begin{equation}
\label{firstbullet}
G^\bullet(R) = \{ \ g\in \GL_{S\otimes_\BQ R}(W\otimes_{\BQ}R) \mid \psi(gw_1,g w_2) = \gamma(g)\,\psi(w_1,w_2), 
\gamma(g) \in R^\times\ \}.
\end{equation}
Define another involution $\star$ of $S$ by $\psi(w_1s,w_2) = \psi(w_1,w_2 s^\star)$. Then 
\begin{equation}
\label{secondbullet}
G^\bullet(R) \simeq \{ \ g\in (S^{\text{opp}}\otimes_\BQ R)^\times \mid g g^\star = \gamma(g) \in R^\times\ \}.
\end{equation}

For a number field $L$, let 
$T_L = R_{L/\BQ}(\BG_m)$, and for a quaternion algebra $B$ over $F$, let $G^B = R_{F/\BQ}(B^\times)$. Both are algebraic groups over $\BQ$. 
\begin{lemma} Suppose that $S$, $*$ and $\psi$ are given. 
There is a unique quaternion algebra $B$ over $F$ such that there is an exact sequence 
$$1\lra T_F\lra (T_K\times G^B)_0 \lra G^\bullet \lra 1 ,$$
where, for a $\BQ$-algebra $R$,  
$$(T_K\times G^B)_0(R) =\{\ (a,b) \in T_K\times G^B(R)\mid {\rm Nm}_{K/F}(a)\cdot \nu(b) \in R^\times\ \}.$$
Here the homomorphism from $T_F$ to $(T_K\times G^B)_0$ is the anti-diagonal embedding $t\mapsto (t, t^{-1})$, and $\nu$ denotes the reduced norm ${\rm Nm}_{B/F}$.\qed 
\end{lemma}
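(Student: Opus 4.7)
The plan is as follows. First, applying the identity $\psi(sw_1,w_2)=\psi(w_1,s^*w_2)$ to $s\in K\subset Z(S)$ and comparing with the defining identity for $\star$, I get $s^\star=s^*$ for $s\in K$; positivity of $*$ forces $*|_K=\star|_K$ to be complex conjugation $\bar{\cdot}$, so $\star$ is an involution of the second kind on the quaternion algebra $S/K$. The key descent step is that such $S$ descends uniquely to a quaternion algebra $B$ over $F$: there is an $F$-subalgebra $B\subset S$ with $B\otimes_F K=S$, $B\cap K=F$, and such that $\star$ acts by $\beta\otimes a\mapsto\beta^\circ\otimes\bar a$, where $\beta\mapsto\beta^\circ$ is the canonical (reduced trace) involution on $B$. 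Existence comes from a Galois-cohomology argument: the composition of $\star$ with the canonical involution $\iota$ of $S/K$ gives (after possibly adjusting by an inner automorphism to achieve order two) a $\bar{\cdot}$-semilinear $\BQ$-algebra automorphism of $S$, whose fixed subalgebra is $B$. Uniqueness up to isomorphism is then immediate from Skolem--Noether applied inside $S$.

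With $B$ in hand, define
\begin{equation*}
\phi:(T_K\times G^B)_0\lra G^\bullet,\qquad (a,b)\longmapsto a\cdot b,
\end{equation*}
viewing $T_K$ and $G^B$ as commuting subgroups of $R_{K/\BQ}((S^{\mathrm{opp}})^\times)$ via the decomposition $S^{\mathrm{opp}}\simeq B^{\mathrm{opp}}\otimes_F K$ and the description \eqref{secondbullet} of $G^\bullet$. The computation $(ab)(ab)^\star=a\bar a\cdot bb^\circ=N_{K/F}(a)\nu(b)$ (product in $S$) shows that $\phi(a,b)\in G^\bullet$ exactly when $N_{K/F}(a)\nu(b)\in R^\times$---which is the definition of $(T_K\times G^B)_0$---and that the similitude $\gamma(\phi(a,b))$ equals $N_{K/F}(a)\nu(b)$. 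The kernel of $\phi$ is $\{(a,b):ab=1\}$; since $K\cap B=F$ inside $S$, this forces $(a,b)=(t,t^{-1})$ for $t\in T_F$, giving exactly the anti-diagonal embedding.

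For surjectivity (as fppf sheaves, equivalently on $\bar\BQ$-points), I base-change to $\bar\BQ$. The splitting $K\otimes_F\bar\BQ\simeq\bar\BQ\times\bar\BQ$ together with $B\otimes_F\bar\BQ\simeq M_2(\bar\BQ)$ identifies $S\otimes_F\bar\BQ$ with $M_2(\bar\BQ)\times M_2(\bar\BQ)$, under which $\star$ acts as $(s_1,s_2)\mapsto(s_2^\circ,s_1^\circ)$. The condition $ss^\star\in\bar\BQ^\times$ becomes $s_1 s_2^\circ=s_2 s_1^\circ=\lambda I$, so $G^\bullet(\bar\BQ)$ consists of pairs $(g_1,\lambda g_1/\det(g_1))$ with $g_1\in\GL_2(\bar\BQ)$ and $\lambda\in\bar\BQ^\times$. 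The map $\phi$ corresponds to $((a_1,a_2),b)\mapsto(a_1 b,a_2 b)$, and for any choice of $a_1\in\bar\BQ^\times$ one solves $b=a_1^{-1}g_1$, $a_2=\lambda a_1/\det(g_1)$, giving a preimage of $(g_1,\lambda g_1/\det(g_1))$. The free parameter $a_1$ reflects the $T_F$-kernel, and since all three groups are smooth, this establishes surjectivity.

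The main obstacle is the descent step---producing the $F$-form $B$ and verifying the compatibility of $\star$ with the decomposition $S=B\otimes_F K$. This is where the positivity of $*$ and the special features of quaternion algebras enter, and is the content of the preliminary discussion of \cite{BZ} being summarized here. Once $B$ is in place, the kernel and surjectivity verifications are routine.
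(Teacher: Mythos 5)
Your construction is essentially the one implicit in the paper: the paper gives no explicit proof (the lemma carries a $\qed$), but the subsequent ``converse'' paragraph records precisely your identity $B=S^{\iota\circ\star}$ (there written as the fixed algebra of $\dagger\circ\iota$), and the verification of the exact sequence is the same routine computation. Two small remarks. First, the parenthetical about ``possibly adjusting by an inner automorphism to achieve order two'' is unnecessary: since $\star\,\iota\,\star$ is a $K$-linear involution of $S$ whose symmetric elements are $\star(K)=K$, it is symplectic, hence equals the canonical involution $\iota$; thus $\iota$ and $\star$ automatically commute and $\iota\circ\star$ already has order two. Second, the appeal to Skolem--Noether for uniqueness is a bit quick: Skolem--Noether shows that two embeddings of a \emph{fixed} $B$ into $S$ are conjugate, but it does not by itself rule out a non-isomorphic $B'$ admitting an abstract exact sequence of the stated form. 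The cleaner uniqueness argument is to observe that the exact sequence forces an isomorphism of derived groups $R_{F/\BQ}(\mathrm{SL}_1(B'))\simeq R_{F/\BQ}(\mathrm{SL}_1(B))$ (or of adjoint groups), from which $B'\simeq B$ follows since the Weil restriction of an absolutely almost simple group determines the base field and the group, and $\mathrm{PGL}_1(B)$ determines the quaternion algebra $B$ up to $F$-isomorphism. With these cosmetic points fixed, the argument is correct.
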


Conversely, suppose that a quaternion algebra $B$ over $F$ and a positive involution $*$ of $S= K\otimes_F B$ are given. 
Denoting by $b\mapsto b^\iota$ the main involution on $B$, define an involution of the second kind on $S$ by $(a\otimes b)^\dagger = \bar a\otimes b^\iota$, and write
$s^* = \text{\rm Ad}(\alpha)(s^\dagger)$ for $\alpha\in S$ with $\alpha^\dagger = \alpha$. Note that 
$B$ is precisely the subalgebra of $S$ fixed by $\dagger\circ\iota$.  Choose an element $\delta\in K^\times$ 
with $\bar\delta = -\delta$, and define
\begin{equation}\label{defpsi}
\psi(w_1,w_2) = \text{\rm tr}_{S/\BQ}(w_1 \delta^{-1}\alpha^{-1} w_2^*),
\end{equation}
where $\text{\rm tr}_{S/\BQ} = \text{\rm tr}_{K/\BQ}\circ \text{\rm tr}_S$, for $\text{\rm tr}_S$ the reduced trace on $S$. 
The form $\psi$ is then a non-degenerate alternating form on $W=S$ with $\psi(sw_1,w_2) = \psi(w_1,s^*w_2)$.
Moreover, $\psi(w_1s,w_2) = \psi(w_1,w_2 s^\dagger)$, and the associated group $G^\bullet$ is then described in terms of $G^B$ and $T_K$, 
as in the lemma.  Thus, we can start with data $B$ and $*$ on $K\otimes_FB$,  instead of $S$, $*$, and $\psi$. 
Note that there is also a form $h:W\times W\lra K$ defined by 
$$h(w_1,w_2) = \text{\rm tr}_S(w_1 \alpha^{-1} w_2^*).$$
This is a hermitian form on $W$ viewed as a left $K$-vector space,  with 
$$\psi(w_1,w_2) = \tr_{K/\BQ} \delta^{-1}\,h(w_1,w_2).$$
In particular, up to our additional restriction on scale factors, 
$G^\bullet$ is the subgroup of similitudes of this hermitian space that commute with the (left multiplication) action of $S$. 

As a special case, suppose that $S= M_2(K)$,  with positive involution $*$ given by  $s^* = {}^t\bar s$.  Let $B$ be a 
quaternion algebra over $F$ that is split by $K$,   and fix an isomorphism $S= M_2(K) \simeq K\otimes_F B$.  
For the idempotent 
$$e = \begin{pmatrix} 1&{}\\{}&0\end{pmatrix}\in S, \qquad e+e^\iota =1,$$
there is a decomposition
$$V = e\cdot S, \qquad V' = e^\iota \cdot S, \qquad S= V \oplus V'$$
as left $K$-modules and right $S$-modules. This decomposition is orthogonal with respect to $\psi$ defined by (\ref{defpsi}), 
and hence also with respect to the hermitian form $h$.  Let $h_V$ be the restriction of $h$ to $V$. 
Let 
$$\rho = \begin{pmatrix}{}&1\\1&{}\end{pmatrix}.$$
Since $e\cdot \rho = \rho\cdot e^\iota$, left multiplication by $\rho$ 
interchanges $V$ and $V'$ and, since $\rho^*=\rho$ and $\rho^2=1$,  defines an isometry of these spaces. 
The group $G^\bullet$ preserves the decomposition $W = V+ V'$ and, since left multiplication by $\rho$ 
commutes with the action of $G^\bullet$, the restriction to $V$ defines an isomorphism 
$$G^\bullet \overset{\sim}{\longrightarrow} R_{F/\BQ} \text{\rm GU}(V)_0,$$
where $\text{\rm GU}(V)_0$ denotes the subgroup where the scale factor lies in $\BQ^\times$. 

%



\bigskip
\obeylines
Department of Mathematics
University of Toronto
40 St. George St., BA6290
Toronto, ON M5S 2E4, Canada.
email: skudla@math.toronto.edu

\bigskip
\obeylines
Mathematisches Institut der Universit\"at Bonn  
Endenicher Allee 60 
53115 Bonn, Germany.
email: rapoport@math.uni-bonn.de

\begin{thebibliography}{AB3}
 \bibitem{B} J.-F. Boutot, \textit{Uniformisation $p$-adique des vari\'et\'es de Shimura},  S\'eminaire Bourbaki, Vol. 1996/97. Ast\'erisque {\bf 245} (1997), Exp. No. 831, 307--322. 
 
  \bibitem{BC} J.-F. Boutot, H. Carayol, \textit{ Uniformisation $p$-adique des courbes de Shimura: les th\'eor\`emes de Cherednik et de Drinfeld}, in: Courbes modulaires et courbes de Shimura (Orsay, 1987/1988). Ast\'erisque {\bf 196-197} (1991),  45--158.
  
  \bibitem{BZ} J.-F. Boutot, T. Zink, \textit{ The $p$-adic uniformization of Shimura curves}, preprint 95--107, Univ. Bielefeld (1995). 
  
\bibitem{C} H. Carayol, \textit{Sur la mauvaise r\'eduction des courbes de Shimura}, Compositio math. {\bf 59} (1986), 151--230.

\bibitem{Chen} M. Chen, \textit{Composantes connexes g\'eom\'etriques de la tour des espaces de modules de groupes $p$-divisibles}. Preprint 2012. 

\bibitem{Ch}I. V. Cherednik, \textit{ Uniformization of algebraic curves by discrete arithmetic subgroups of $\PGL_2(k_w)$ with compact quotient spaces},  (Russian) Mat. Sb. (N.S.) {\bf 100}(142) (1976), no. 1, 59--88, 165. 

\bibitem{DOR} J.-F. Dat, S. Orlik, M. Rapoport, Period domains over finite and $p$-adic fields, Cambridge Tracts in Mathematics, {\bf 183}. Cambridge University Press, Cambridge, 2010. xxii+372 pp.

 \bibitem{Dr} V. G. Drinfeld, \textit{ Coverings of p-adic symmetric domains},  (Russian) Funkcional. Anal. i Prilozen. {\bf 10} (1976), no. 2, 29--40.
 
 \bibitem{G} G. G\"ortz, \textit{On the flatness of models of certain Shimura varieties of PEL-type}. Math. Ann. {\bf 321} (2001), no. 3, 689--727.
 
  \bibitem{Fa} G. Faltings, \textit{Almost \'etale extensions}. Cohomologies p-adiques et applications arithm\'etiques, II. Ast\'erisque {\bf 279} (2002), 185--270.
  
   \bibitem{Fo} J.-M. Fontaine, \textit{Le corps des p\'eriodes p-adiques}. With an appendix by Pierre Colmez. P\'eriodes p-adiques (Bures-sur-Yvette, 1988). Ast\'erisque  {\bf 223} (1994), 59--111.
 
 \bibitem{Goertz1} U.~G\"ortz, {\it On the flatness of models of certain Shimura varieties of PEL-type}, Math. Ann. {\bf 321} (2001), 689--727.
 
 \bibitem{GGK} M. Green, P. Griffiths, M. Kerr,  Mumford-Tate groups and domains. Their geometry and arithmetic. Annals of Mathematics Studies, {\bf 183}. Princeton University Press, Princeton,  2012. 
 
 \bibitem{jacobowitz} 
 R. Jacobowitz, 
 \textit{Hermitian forms over local fields}, Amer. J. Math. {\bf 84} (1962), 441--465.
 
  \bibitem{K} R. E. Kottwitz, \textit{ Points on some Shimura varieties over finite fields},  J. Amer. Math. Soc. {\bf 5} (1992), no. 2, 373--444.
  
  \bibitem{K2} R. E. Kottwitz, \textit{ Isocrystals with additional structure. II}, Compositio Math. {\bf 109} (1997), no. 3, 255--339. 

\bibitem{KR} S. Kudla, M. Rapoport, \textit{An alternative description of the Drinfeld p-adic half-plane},  arXiv:1108.5713 

\bibitem{KR1} S. Kudla, M. Rapoport, \textit{Special cycles on unitary Shimura varieties, I. Unramified local theory}, Invent. math. {\bf 184} (2011), 629--682. 


\bibitem{KR2} S. Kudla, M. Rapoport, \textit{Special cycles on unitary Shimura varieties, II. Global theory},  arXiv:0912.3758v1

 \bibitem{N} J. Nekovar, \textit{Level raising and anticyclotomic Selmer groups for Hilbert modular forms of weight two},  Canad. J. Math. {\bf 64} (2012), 588--668.
\bibitem{O} A. Ogus, \textit{Hodge cycles and crystalline cohomology}, in: P. Deligne, J. Milne, A. Ogus, K.-y. Shih, 
\textit{ Hodge cycles, motives, and Shimura varieties},  357--414.  
Lecture Notes in Mathematics, {\bf 900}. Springer-Verlag, Berlin-New York, 1982.  

\bibitem{P} G. Pappas, \textit{ On the arithmetic moduli schemes of PEL Shimura varieties}. J. Algebraic Geom. {\bf 9} (2000), no. 3, 577--605.

\bibitem{PRS} G. Pappas, M. Rapoport, B. Smithling,  
\textit{Local models of Shimura varieties, I. Geometry and combinatorics.}  Handbook of moduli (eds. G. Farkas and I. Morrison), vol. III,  135--217, Adv. Lect. in Math. {\bf 26}, International Press (2013).  


\bibitem {RR} M. Rapoport, M. Richartz, \textit{On the classification and specialization of F-isocrystals with additional structure}. Compositio Math. {\bf 103} (1996), no. 2, 153--181.

 \bibitem{R} M. Rapoport, \textit{ On the bad reduction of Shimura varieties}, in:  Automorphic forms, Shimura varieties, and L-functions, Vol. II (Ann Arbor, MI, 1988), 253--321. Perspect. Math., {\bf 11}, Academic Press, Boston, MA, 1990. 
 
  \bibitem{Ra} A. Rajaei, \textit{On the levels of mod l Hilbert modular forms}. J. Reine Angew. Math. {\bf 537} (2001), 33--65. 
 \bibitem{Ri} K. Ribet, \textit{On modular representations of ${\rm Gal}(\bar\BQ/\BQ)$ arising from modular forms}. Invent. Math. {\bf 100} (1990), no. 2, 431--476. 

\bibitem{RZ}
M. Rapoport, T. Zink,  Period spaces for $p$-divisible groups. Annals  
of Mathematics Studies, {\bf 141}, Princeton University Press,  
Princeton, 1996.

\bibitem{T} T. Tsuji, \textit{p-adic \'etale cohomology and crystalline cohomology in the semi-stable reduction case}. 
Invent. Math. {\bf 137} (1999), no. 2, 233--411.

\bibitem{V} Y. Varshavsky, \textit{ $P$-adic uniformization of unitary Shimura varieties}, Inst. Hautes Etudes Sci. Publ. Math. {\bf 87} (1998), 57--119. 

\bibitem{V2} Y. Varshavsky, \textit{ $P$-adic uniformization of unitary Shimura varieties. II. }, J. Differential Geom. {\bf 49} (1998),  75--113.






 







\end{thebibliography}
\end{document}